\newcommand{\B}{\mathbb{B}}
\newcommand{\N}{\mathbb{N}}
\newcommand{\R}{\mathbb{R}}
\newcommand{\cA}{\mathcal{A}}
\newcommand{\cD}{\mathcal{D}}
\newcommand{\cF}{\mathcal{F}}
\newcommand{\cH}{\mathcal{H}}
\newcommand{\cS}{\mathcal{S}}
\newcommand{\cs}{\mathfrak s}
\newcommand{\bX}{{\bf X}}
\newcommand{\bY}{{\bf Y}}
\newcommand{\bU}{{\bf U}}
\newcommand{\ep}{\varepsilon}
\newcommand{\ph}{\varphi}
\newcommand{\sm}{\setminus}
\newcommand{\diam}{\mbox{\rm diam}}
\renewcommand{\exp}{\mbox{\rm exp}\;\!}
	\renewcommand{\div}{\mbox{\rm div}}
	\newcommand{\Lie}{\mathrm{Lie}}
	\newcommand{\rE}{{\rm E}}
	\newcommand{\m}{\mathrm m}
	\newcommand{\n}{\mathrm n}
	\newcommand{\res}{\mathop{\hbox{\vrule height 7pt width .5pt depth 0pt \vrule height .5pt width 6pt depth 0pt}}\nolimits}
	\newcommand{\beqas}{\begin{eqnarray*}}
		\newcommand{\eeqas}{\end{eqnarray*}}
	\newcommand{\beqa}{\begin{eqnarray}}
		\newcommand{\eeqa}{\end{eqnarray}}
	\newcommand{\beq}{\begin{equation}}
		\newcommand{\eeq}{\end{equation}}
	\newcommand{\bce}{\begin{center}}
		\newcommand{\ece}{\end{center}}
	\newcommand{\pa}[1]{\left( #1 \right)}               
	\newcommand{\set}[1]{\left\{ #1 \right\}}            
\theoremstyle{definition}
\newtheorem{definition}{Definition}[section]
\newtheorem{rmk}[definition]{Remark}
\newtheorem{notation}[definition]{Notation}
\theoremstyle{plain}
\newtheorem{lemma}[definition]{Lemma}
\newtheorem{proposition}[definition]{Proposition}
\newtheorem{corollary}[definition]{Corollary}
\newtheorem{theorem}[definition]{Theorem}
\newcommand{\dive}{{\mathrm{div}}}
\newcommand{\der}{\mathrm{d}}
\newcommand{\dd}{\mathrm{d}}
\newcommand{\bu}{{\bf 1}}
\title[Surface measure on sub-Riemannian manifolds]
{Surface measure on, and the local geometry of, sub-Riemannian manifolds}
\author[S.\, Don]
{Sebastiano Don}
\address[Sebastiano Don]{Mathematisches Institut, Sidlerstrasse 12, 3012 Bern, Switzerland}
\email[]{sebastiano.don@unibe.ch}
\author[V.\, Magnani]{Valentino Magnani}
\address[Valentino Magnani]{University of Pisa, Largo Pontecorvo 5, I-56127 Pisa, Italy.}
\email[]{valentino.magnani@unipi.it}
\date{\today}
\subjclass{
	53C17, 
	28A75, 
	26A45. 
}
\keywords{Sub-Riemannian manifold, sub-Riemannian distance, Hausdorff measure, area formula, set of finite perimeter, Caccioppoli set}
\thanks{
	S.D. was partially supported by the Academy of Finland (grant
	288501
	`\emph{Geometry of subRiemannian groups}' and by grant
	322898
	`\emph{Sub-Riemannian Geometry via Metric-geometry and Lie-group Theory}') and by the European Research Council
	(ERC Starting Grant 713998 GeoMeG `\emph{Geometry of Metric Groups}'). S.D. was also partially supported by the Swiss National Foundation grant 200020\_191978.
}
\begin{document}

\begin{abstract}
We prove an integral formula for the spherical measure of hypersurfaces in equiregular sub-Riemannian manifolds.
Among various technical tools, we establish a general criterion for the uniform convergence of parametrized sub-Riemannian distances, and local uniform asymptotics for the diameter of small metric balls.
\end{abstract}
	
\maketitle

\tableofcontents

\section{Introduction} 

Sub-Riemannian manifolds nowadays constitute a wide area of research, related to PDEs, Geometric Analysis, Differential Geometry and Control Theory. Nevertheless, several aspects of their geometry are still far from being understood. Such difficulties already appear in the wide project to develop Geometric Measure Theory in Carnot groups, which are special classes of sub-Riemannian manifolds. To ease our exposition, in the sequel we abbreviate the adjective ``sub-Riemannian'' to ``SR''.

In the present work, we focus our attention on the {\em surface measure} of hypersurfaces embedded in a SR manifold. From a historical perspective, the study of surface measure played an important role in the development of several branches of Mathematics, such as Calculus of Variations, Geometric Analysis, Probability and Geometric Measure Theory. 
Around the half of the twentieth century many works were devoted to find and study the proper notion of surface measure, in view of applications to Calculus of Variations and to the early stages of Geometric Measure Theory. For instance, the questions related to the notion of {\em Lebesgue area} were highly nontrivial and the first works of H. Federer were devoted to the study of this notion of surface area, \cite{FedererPhD1944,FedererSAI,FedererSAII,FedererLA1955}.
The debate on the natural notion of surface measure can be seen from the deep studies of  \cite{Cesari1956,RadoLA1948}.
In the same period, sets of finite perimeter first appeared in the work of R. Caccioppoli \cite{CaccioppoliColl1953} and the theory was developed by E. De Giorgi through his celebrated rectifiability theorem, \cite{DeGiorgi1955}.
The rectifiability of the reduced boundary is crucial to establish the important formula relating perimeter measure to the Hausdorff measure of the reduced boundary.

In the non-Euclidean framework of stratified groups, new theoretical tools are ne\-cessary. A key result is the asymptotically doubling property of the perimeter measure in metric spaces, established by L. Ambrosio \cite{Amb01,Amb2002}, that lead to a rectifiability theorem for sets of finite perimeter in some classes of nilpotent Lie groups, proved by B. Franchi, R. Serapioni and F. Serra Cassano, \cite{FSSC01,FSSC5}, see also \cite{Marchi2014}. The area formulas for the perimeter measure in these papers were corrected in \cite{Mag31ArXiv}, see also \cite{Mag31}. The correction subsequently appeared in \cite{FSSC15} for the case of special symmetric distances.
In all of these works, the same measure-theoretic area formula \cite{Mag30ArXiv,Mag30} was used, see also \cite{LecMag21} for a systematic study of measure-theoretic area formulas.  
The general representation of the perimeter measure with respect to the spherical measure arising from any homogeneous distance was established in \cite{Mag31}. 
Concerning the rectifiability problem in stratified groups, more recent results can be found in \cite{DonLDMV2022}. 

Area formulas for the spherical measure in general homogeneous groups and for higher codimensional smooth submanifolds were obtained in \cite{Magnani2019Area,Mag22RS}, see also references therein. As clarified in these papers, one of the difficulties is to establish negligibility results for ``points of lower degree''. In fact, even for smooth submanifolds the theory of the surface area in homogeneous groups is still far from being complete.

Extending further the study of the surface measure from homogeneous groups to SR manifolds leads to additional difficulties, as the lack of a group operation and of global dilations. We overcome these issues through the study of some interesting metric properties of equiregular SR manifolds, along with their metric tangent spaces.

Our aim is to find an explicit formula that relates the perimeter measure of a bounded, $C^1$ smooth and open set of a SR manifold to the spherical measure of its boundary. The fascinating aspect of this question is that the use of the SR distance to construct the spherical measure naturally lets the geometry of the SR manifold enter the question. In many respects the present work can be seen as a continuation of the study started in \cite{AGM15BVSubRiem}, where the coordinate-free notion of sub-Riemannian perimeter measure was introduced.
The spherical measure in sub-Riemannian manifolds was previously studied in \cite{GheJea2014,GheJea2015,GheJea201516}, where a number of technical difficulties were overlooked, especially in the application of differentiation theorems for measures.

We consider an oriented $\n$-dimensional SR manifold $M$ with a metric $g$ on a distribution of $\m$-planes defined by a family $\cD$ of {\em horizontal vector fields}. The  oriented ``volume measure'' is assigned through an everywhere nonvanishing $\n$-form $\omega$, hence we get a sub-Riemannian measure manifold $(M,\cD,g,\omega)$, where the perimeter measure  $\|D_{\omega,g}\bu_E\|$ of a measurable set $E\subset M$ can be defined in intrinsic terms, see Section~\ref{sect:notions and results} for more details.

For an open set $\Omega$ with $C^1$ smooth boundary $\partial\Omega$, our first result is an integral representation of the perimeter measure
\beq\label{eq:POmega}
\|D_{\omega,g}\bu_\Omega\|(U)=\int_{\partial \Omega\cap U}\|\omega\|_{\overline g}\, |\nu_\mathcal D|_{\overline g}  \,\der \sigma_{\overline g},
\eeq
where we have denoted by $\nu_{\mathcal D}$ the {\em horizontal normal} to $\partial\Omega$ (Definition~\ref{def:hnor}) and $\overline g$ is any Riemannian metric that extends the sub-Riemannian metric $g$. Formula \eqref{eq:POmega} is established in Theorem~\ref{th:formulaperimetro}. The Riemannian surface measure associated with $\partial \Omega$ is denoted by $\sigma_{\overline g}$ and $U\subset M$ is any open subset.
We point out that the proof of \eqref{eq:POmega} requires a general version of the Riemannian divergence theorem with respect to a volume form, see Theorem~\ref{th:divergence}. 
Formula \eqref{eq:POmega} has also an independent interest, since it links the sub-Riemannian perimeter to the Riemannian surface measure $\sigma_{\overline g}$, taking into account the fixed volume form $\omega$. 
The left-hand side of \eqref{eq:POmega} only needs the SR metric $g$, hence the right-hand side does not depend on its extension $\overline g$
and actually motivates the natural definition of {\em sub-Riemannian surface measure} for any smooth hypersurface $\Sigma\subset M$, according to Definition~\ref{def:SRmeas}.
For any Borel set $A\subset \Sigma$, we define
\begin{equation}
	\sigma^{SR}_\Sigma(A)=\int_{\Sigma\cap A} \|\omega\|_{\bar g}\, |\nu_{\mathcal D}|_{\bar g}\,\dd \sigma_{\bar g}.
\end{equation}
The independence of the extension $\bar g$ justifies the slight abuse of notation, denoting by the same symbol $g$ any Riemannian metric that extends the sub-Riemannian metric. 

To use the spherical measure, we focus our attention on equiregular SR manifolds, whose Hausdorff dimension $Q$ has an explicit formula \cite{Mitchell85}. 
Our main result is the following geometric representation of the {\em spherical Federer density} $\cs^{Q-1}$ of $\sigma_\Sigma^{SR}$, namely 
\begin{equation}\label{eq:mainintro}
	\cs^{Q-1}(\sigma^{SR}_\Sigma, q)=\|\omega(q)\|_g\,\beta_{d,g}(\nu_\cD(q)).
\end{equation}
The number $\beta_{d,g}(\nu_\cD(q))$ is the spherical factor, which is described below. Equality \eqref{eq:mainintro} is proved through a blow-up process that also involves the ambient space, since the SR manifold is not homogeneous with respect to local dilations. As a result, the proof of \eqref{eq:mainintro} requires a {\em double blow-up}: the one of the SR manifold and the one of the hypersurface. The first blow-up corresponds to the well known {\em nilpotent approximation} of the SR manifold, representing the metric tangent space of $M$ at the blow-up point $q$, see Section~\ref{sect:nilpapp} for more information.

The left-hand side of \eqref{eq:mainintro} is the {\em spherical Federer density} of $\sigma_\Sigma^{SR}$ at $q$ (Definition~\ref{def:FedDens}). Such ``density'' was first introduced in \cite[Definition~5]{Mag30} to establish a measure-theoretic area formula for the spherical measure (\cite[Theorem~11]{Mag30}), which represents our bridge to the sub-Riemannian area formula \eqref{eq:SRareaperimeter}.
The {\em spherical factor} $\beta_{d,g}(\cdot)$ is a ``pointwise geometric invariant'' of both the SR manifold and the hypersurface $\Sigma$, which is related to the nilpotent approximation of $M$ at $q$ and to the horizontal normal $\nu_\cD(q)$ of $\Sigma$ at $q$.
It can be seen as the maximal area of the intersection between the orthogonal space to $\nu_\cD(q)$ with the sub-Riemannian unit ball in the nilpotent approximation of the SR manifold at $p$. 

For homogeneous groups we have a simpler definition of spherical factor, due to the homogeneity of the ambient space, therefore only the homogeneous tangent space to the submanifold appears (\cite[Definition~7.6]{Magnani2019Area}).
In broad terms, the spherical factor is a sort of ``renormalizing constant'' for the spherical measure. 
To give a simple idea, in the Euclidean space it coincides with the constant $\omega_{\n-1}$ appearing in the definition of the $(\n-1)$-dimensional Hausdorff measure, that is the $(\n-1)$-dimensional Lebesgue measure of the Euclidean unit ball in $\R^{\n-1}$.

However, the formal definition of spherical factor in SR manifolds was not easy to conceive (Definition~\ref{def:beta}). We could also imagine other definitions, like considering sub-Riemannian balls with center close to the blow-up point $q$ and one cannot exclude other possible equivalent definitions. Somehow unexpectedly the definition of spherical factor in SR manifolds came after the proof of the ``double blow-up'', which was obtained by taking a special system of coordinates. 
From \eqref{eq:mainintro} one may deduce a priori that the spherical factor $\beta_{d,g}(\cdot)$ is well defined on the horizontal directions of $TM$ and it depends on a number of mathematical objects, like the sub-Riemannian distance, the Riemannian metric and the hyperplane orthogonal to $\nu_\cD$. However, this information is not enough to find its general explicit formula in Definition~\ref{def:beta}. An ``invariance property'' is necessary and this is provided 
by the next result, proved in Theorem~\ref{th:intrinsicfederer}.

\begin{theorem}[Change of exponential coordinates of the first kind]\label{th:intrinsicfedererIntro}
	Let $(M,\mathcal D,g,\omega)$ be a sub-Riemannian measure manifold and denote by the same symbol $g$ a Riemannian metric on $M$ that extends the sub-Riemannian metric. We assume that $p\in M$ is a regular point and consider two privileged orthonormal frames $\bX=(X_1,\dots, X_\n)$ and $\bY=(Y_1,\dots, Y_\n)$ in an open neighborhood $W$ of $p$. 
	According to \eqref{eqdef:exponentialcoordinates1}, we introduce the exponential coordinates of the first kind  $F_{p,\bX}, F_{p,\bY}\colon V\to W$ associated with $\bX$ and $\bY$ respectively, around $p$. The set $V\subset\R^\n$ is an open neighborhood of $0\in\R^\n$.
	The frames $\widehat\bX^p=(\widehat X_1^p,\dots, \widehat X^p_\n)$ and $\widehat\bY^p=(\widehat Y_1^p,\dots, \widehat Y_\n^p)$ denote the nilpotent approximations of $\bX$ and $\bY$ at $p$, see Definition~\ref{def:nilpApprox}. 
	Then the following facts hold:
	\begin{itemize}
		\item[(i)] the family of maps $\delta_{1/\varepsilon}\circ F_{p,\bY}^{-1}\circ F_{p,\bX}\circ \delta_\varepsilon\colon V\to \mathbb R^\n$ uniformly converges to the restriction of a linear Euclidean isometry $\widehat L\colon \R^\n\to\R^\n$ as $\varepsilon \to 0$;
		\item[(ii)] we have $\widehat L=\dd(F_{p,\bY}^{-1}\circ F_{p,\bX})(0)$ and the matrix associated with $\widehat L$ is block diagonal;
		\item[(iii)] if we denote by $\widehat d_{p,\bX}$ and $\widehat d_{p,\bY}$ the sub-Riemannian distances associated with $\widehat\bX^p$ and $\widehat\bY^p$,
		respectively, then $\widehat d_{p,\bY}(\widehat L(x),\widehat L(y))=\widehat d_{p,\bX}(x,y)$ for every $x,y\in \mathbb R^\n$.
	\end{itemize}
\end{theorem}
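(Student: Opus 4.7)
I analyze the smooth diffeomorphism $\Phi := F_{p,\bY}^{-1}\circ F_{p,\bX}$ on the open neighborhood $V$ of $0\in\R^\n$, noting that $\Phi(0)=0$. First I compute $d\Phi(0)$: since $F_{p,\bX}$ and $F_{p,\bY}$ are exponential coordinates of the first kind, $dF_{p,\bX}(0)\,e_i = X_i(p)$ and $dF_{p,\bY}(0)\,e_j = Y_j(p)$, so the matrix $A = (a_{ji})$ of $d\Phi(0)$ in the standard basis is the change-of-basis matrix defined by $X_i(p) = \sum_j a_{ji}\,Y_j(p)$. The $g$-orthonormality of both frames at $p$ forces $A$ to be orthogonal, so $d\Phi(0)$ is a linear Euclidean isometry. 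For the block-diagonal structure, the privileged hypothesis says that, for each degree $k$, the initial segments of $\bX$ and $\bY$ indexed by weight $\le k$ both span the $k$-th layer of the filtration at $p$; hence $A$ must be block upper triangular with respect to the weight partition $w_1\le\cdots\le w_\n$, and together with orthogonality this makes $A$ block diagonal. Setting $\widehat L := d\Phi(0)$ proves~(ii).

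For~(i), my strategy is a weighted Taylor expansion of $\Phi$ adapted to the anisotropic dilations $\delta_\varepsilon$. The crucial structural point, which I expect to be the main technical obstacle, is the claim that in two privileged exponential systems of the first kind centered at $p$, the change of coordinates has the form
\begin{equation*}
\Phi_i(x) = \sum_{j\colon w_j = w_i} a_{ji}\,x_j + R_i(x),\qquad R_i(\delta_\varepsilon x) = o(\varepsilon^{w_i}),
\end{equation*}
uniformly on compact subsets of $V$, for every $i=1,\dots,\n$; that is, the weight-$w_i$ homogeneous part of $\Phi_i$ is \emph{linear}, no nonlinear monomials of weighted degree exactly $w_i$ being allowed. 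This is the point at which the specific choice of exponential of the first kind (as opposed to the second kind) enters decisively, and its proof relies on the fact that $F_{p,\bX}^*X_i$ and $F_{p,\bY}^*Y_i$ agree with the nilpotent approximations $\widehat X_i^p$ and $\widehat Y_i^p$ modulo terms of strictly higher weighted order, combined with a careful comparison of the flows of $\sum x_i X_i$ and $\sum x_i Y_i$. Granted the above expansion, a componentwise computation gives
\begin{equation*}
[\delta_{1/\varepsilon}\circ\Phi\circ\delta_\varepsilon]_i(x) = \sum_{j\colon w_j = w_i} a_{ji}\,x_j + \varepsilon^{-w_i}R_i(\delta_\varepsilon x) \xrightarrow[\varepsilon\to 0]{} \widehat L_i(x),
\end{equation*}
with uniform convergence on compact subsets of $V$, proving~(i).

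For~(iii), I pass to the limit in the pullback identity $F_{p,\bX}^*X_i = \Phi^*(F_{p,\bY}^*X_i)$. Writing $X_i = \sum_j b_{ji}\,Y_j$ on $W$ with smooth coefficients satisfying $b_{ji}(p) = a_{ji}$, and using the defining property of the nilpotent approximation
\begin{equation*}
\varepsilon\,\delta_\varepsilon^*(F_{p,\bX}^*X_i)\xrightarrow[\varepsilon\to 0]{}\widehat X_i^p,\qquad \varepsilon\,\delta_\varepsilon^*(F_{p,\bY}^*Y_j)\xrightarrow[\varepsilon\to 0]{}\widehat Y_j^p,
\end{equation*}
together with the convergence $\delta_{1/\varepsilon}\circ\Phi\circ\delta_\varepsilon\to\widehat L$ established in~(i), I obtain in the limit
\begin{equation*}
\widehat L_*\,\widehat X_i^p = \sum_{j\colon w_j = w_i} a_{ji}\,\widehat Y_j^p,\qquad i = 1,\dots,\n.
\end{equation*}
Restricting to $i\le\m$, the block $(a_{ji})_{i,j\le\m}$ is orthogonal, so $\widehat L$ sends the orthonormal horizontal frame $(\widehat X_1^p,\dots,\widehat X_\m^p)$ to another orthonormal horizontal frame for $\widehat\bY^p$. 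Consequently $\widehat L$ maps $\widehat\bX^p$-horizontal curves bijectively to $\widehat\bY^p$-horizontal curves of equal sub-Riemannian length, yielding $\widehat d_{p,\bY}(\widehat L(x),\widehat L(y)) = \widehat d_{p,\bX}(x,y)$ as required by~(iii).
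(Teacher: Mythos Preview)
Your plan for (ii) is essentially the paper's argument, and your overall strategy is sound. Two points deserve comment.

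\medskip
\textbf{On (i).} You correctly identify the heart of the matter: the claim that $\Phi_i(\delta_\varepsilon x)=\varepsilon^{w_i}\sum_{w_j=w_i}a_{ji}x_j+o(\varepsilon^{w_i})$ uniformly on compacta. You flag it as the main obstacle and gesture at ``a careful comparison of the flows'', but you do not carry it out. Note that ordinary Taylor expansion only gives $R_i(\delta_\varepsilon x)=O(\varepsilon^2)$, which is insufficient for $w_i\ge 2$; ruling out nonlinear monomials of weighted degree exactly $w_i$ in $\Phi_i$ is genuinely where the work lies. The paper does not argue via a Taylor expansion of $\Phi$ at all. Instead it writes $H_\varepsilon(x)=\delta_{1/\varepsilon}\circ\Phi\circ\delta_\varepsilon(x)$ \emph{explicitly as an exponential}: using naturality of flows under diffeomorphisms one gets
\[
H_\varepsilon(x)=\exp\Big(\sum_i x_i\sum_s \frac{\tilde a_i^s\circ\delta_\varepsilon}{\varepsilon^{w_s-w_i}}\,\widetilde X^{p,\varepsilon}_s\Big)(0),
\]
where $(F_{p,\bY}^{-1}\circ F_{p,\bX})_*\widetilde X_i^p=\sum_s\tilde a_i^s\,\widetilde X_s^p$. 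The limits of the ratios $\tilde a_i^s\circ\delta_\varepsilon/\varepsilon^{w_s-w_i}$ are then extracted via a dual-basis trick: since both rescaled frames $\widetilde\bX^{p,\varepsilon}$ and $\widetilde\bY^{p,\varepsilon}$ converge (Theorem~\ref{th:tangentspace}), pairing against the moving dual frame forces each coefficient to converge. Finally one uses that the exponential map of the nilpotent approximation $\widehat\bY^p$ is the identity (Remark~\ref{rmk:expidentity}) to evaluate the limit as $C(p)^{-1}x$. This route is more concrete than your sketch and avoids having to analyze the Taylor structure of $\Phi$ directly.

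\medskip
\textbf{On (iii).} Your argument is different from the paper's and has a gap as written. You want to pass to the limit in $(H_\varepsilon)_*\big[\varepsilon(\delta_{1/\varepsilon})_*\widetilde X_i^p\big]$ and identify it with $\widehat L_*\widehat X_i^p$; but (i) only furnishes \emph{uniform} convergence $H_\varepsilon\to\widehat L$, while push-forward of vector fields requires $C^1$ convergence. Your computation on the $\bY$-side does produce the limit $\sum_{w_j=w_i}a_{ji}\widehat Y_j^p$ without this, yet the identification of that limit with $\widehat L_*\widehat X_i^p$ is exactly the missing $C^1$ step. The paper bypasses this entirely: it uses the pointed metric blow-up (Theorem~\ref{th:localgroup}) twice, once for $\widehat d_{p,\bX}$ via $F_{p,\bX}$ and once for $\widehat d_{p,\bY}$ via $F_{p,\bY}$, together with the uniform convergence $H_\varepsilon\to\widehat L$, to obtain $\widehat d_{p,\bY}(\widehat L x,\widehat L y)=\widehat d_{p,\bX}(x,y)$ directly. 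Your frame-intertwining approach would work if you first upgraded (i) to $C^1_{\rm loc}$ convergence (which is plausible from the exponential formula above, since the rescaled vector fields converge in $C^\infty_{\rm loc}$), but that upgrade needs to be stated and justified.
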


The notation of this theorem is introduced in Sections~\ref{subsec:expSR} and \ref{sect:nilpapp}.
Theorem~\ref{th:intrinsicfedererIntro}  implies that the spherical factor does not depend on the choice of 
the exponential coordinates of the first kind, see Corollary~\ref{cor:betaintrinsic}.
Now we state the double blow-up that leads to the key equality \eqref{eq:mainintro}.
Its proof corresponds to that of Theorem~\ref{th:main}.

\begin{theorem}[Double blow-up]\label{th:mainIntro} 
	Let $(M,\mathcal D, g, \omega)$ be a sub-Riemannian measure manifold. We assume that $\Sigma\subset M$ is an oriented $C^1$ smooth hypersurface with orienting unit normal $\nu$ on $\Sigma\cap T$ and $T\subset M$ is an open neighborhood of $p\in \Sigma$. 
	We also denote by the same symbol $g$ a Riemannian metric on $M$ that extends the given sub-Riemannian metric and we consider the associated SR surface measure $\sigma_\Sigma^{SR}$.
	If $p$ is both a regular point of $M$ and a noncharacteristic point of $\Sigma$, then
	\beq\label{eq:theta=betaIntro}
	\cs^{Q-1}(\sigma_\Sigma^{SR}, p)=\|\omega(p)\|_g\, \beta_{d,g}(\nu_\cD(p)),
	\eeq
	where $\nu_\cD(p)$ denotes the horizontal normal at $p\in\Sigma$.
\end{theorem}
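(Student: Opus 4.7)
The plan is to realize \eqref{eq:theta=betaIntro} as the outcome of two simultaneous blow-ups: one of the ambient SR manifold around $p$, and one of the hypersurface $\Sigma$ at $p$. After fixing a privileged orthonormal frame $\bX$ at the regular point $p$ and the associated exponential coordinates of the first kind $F_{p,\bX}\colon V\to W$, I would pull back the SR distance $d$, the hypersurface $\Sigma$, the volume form $\omega$, and the surface measure $\sigma_\Sigma^{SR}$ to $V\subset\R^\n$; then use the anisotropic dilations $\delta_\ep$ to define the rescaled distances $d_\ep(x,y):=\ep^{-1}d(\delta_\ep x,\delta_\ep y)$, the rescaled hypersurface $\Sigma_\ep:=\delta_{1/\ep}(F_{p,\bX}^{-1}(\Sigma))$, and the rescaled measures $\mu_\ep:=\ep^{-(Q-1)}(\delta_{1/\ep})_\sharp\sigma_\Sigma^{SR}$. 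The noncharacteristic hypothesis guarantees $\nu_\cD(p)\neq 0$, so the Euclidean hyperplane $H:=\nu_\cD(p)^\perp\subset\R^\n$ is a genuine limit tangent for $\Sigma_\ep$.

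The first blow-up is metric: the paper's general convergence criterion for parametrized SR distances, together with the fact that $\bX$ is privileged, yields $d_\ep\to \widehat d_{p,\bX}$ uniformly on compact subsets of $V\times V$; in parallel, the local uniform diameter asymptotics control $\delta_{1/\ep}(B(p,\ep))$ between two balls of the nilpotent approximation of radii comparable to $1$. The second blow-up is geometric/measure-theoretic: since $\Sigma$ is $C^1$ and $\nu_\cD(p)\neq 0$, the sets $\Sigma_\ep$ converge to $H$ in $C^1_{\rm loc}$; combined with the explicit representation \eqref{eq:POmega} of $\sigma_\Sigma^{SR}$ and the continuity of $\|\omega\|_g$ and $|\nu_\cD|_g$ at $p$, the rescaled measures satisfy
\begin{equation*}
\mu_\ep \;\rightharpoonup\; \|\omega(p)\|_g\,\cH^{\n-1}\res H
\qquad\text{as}\qquad \ep\to 0^+
\end{equation*}
(weak convergence on compact subsets of $\R^\n$), where $\cH^{\n-1}$ is the Euclidean Hausdorff measure.

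With both blow-ups in hand I would compute the spherical Federer density directly from its definition: for radius $r=\ep$, every SR ball $B(x,r)$ containing $p$ is the image under $F_{p,\bX}\circ\delta_\ep$ of a $d_\ep$-ball $\widehat B_{d_\ep}(\widehat x,1)$ containing the origin, with $\widehat x$ ranging in a bounded set, so that
\begin{equation*}
\frac{\sigma_\Sigma^{SR}(B(x,r))}{(2r)^{Q-1}}
=\frac{\mu_\ep\bigl(\widehat B_{d_\ep}(\widehat x,1)\bigr)}{2^{Q-1}}.
\end{equation*}
Passing to the limit $r=\ep\to 0^+$ via the two blow-ups gives the asymptotic value $\|\omega(p)\|_g\,\cH^{\n-1}(H\cap \widehat B_{\widehat d_{p,\bX}}(\widehat x,1))/2^{Q-1}$; taking the supremum over admissible $\widehat x$ yields, by Definition~\ref{def:beta}, exactly $\|\omega(p)\|_g\,\beta_{d,g}(\nu_\cD(p))$. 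The independence of the right-hand side from the choice of $\bX$ is then provided by Corollary~\ref{cor:betaintrinsic}, which follows from Theorem~\ref{th:intrinsicfedererIntro}.

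The hardest step is the justification of the interchange of supremum (over ball centers) and limit $\ep\to 0^+$ inside the Federer density. Weak convergence $\mu_\ep\rightharpoonup\mu_0$ alone is insufficient, because the balls $\widehat B_{d_\ep}(\widehat x,1)$ themselves depend on $\ep$ through the moving metric; one needs both the uniform convergence of the rescaled distances (to ensure Hausdorff stability of the family of balls) and a quantitative control on the blow-up of $\Sigma_\ep$. Combined with compactness of the admissible centers and continuity of the limit profile $\widehat x\mapsto\cH^{\n-1}(H\cap\widehat B_{\widehat d_{p,\bX}}(\widehat x,1))$, these ingredients should close the argument.
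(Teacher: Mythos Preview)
Your outline is the paper's own strategy---fix exponential coordinates of the first kind at $p$, write $\Sigma$ as a graph over the hyperplane $H=\dd F_{p,\bX}(0)^{-1}(\Pi(\nu_\cD(p)))$, rescale by $\delta_{1/\ep}$, and let the metric and the hypersurface blow up together---so conceptually you are on the same track. The paper, however, does not pass through an abstract weak-convergence statement for $\mu_\ep$; it writes $\sigma_\Sigma^{SR}(\B(q,r))$ explicitly as $r^{Q-1}\int_{A_r}\alpha(r,\eta)\,\dd\mathscr L^{\n-1}$ over the rescaled graph domain and analyzes the integrand and the domain separately. Two points in your sketch need real work. First, the constant in your claimed limit $\mu_\ep\rightharpoonup\|\omega(p)\|_g\,\cH^{\n-1}_\rE\res H$ is \emph{not} a consequence of ``continuity of $\|\omega\|_g$ and $|\nu_\cD|_g$'': it hides a cancellation. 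In the paper's coordinates the density $\alpha(r,\eta)$ is $\|\omega\|_g\cdot|\nu_\cD|_g\cdot\sqrt{\det(g(\partial_i(F\circ\Phi),\partial_j(F\circ\Phi)))}$, and its value at $r=0$ equals $\|\omega(p)\|_g$ because the Riemannian Jacobian at $0$ is $|\nabla_g f(p)|/|\nabla_\cD f(p)|=1/|\nu_\cD(p)|_g$, which kills the $|\nu_\cD(p)|_g$ factor. This uses a \emph{specific} frame with $X_1(p)=\nu_\cD(p)/|\nu_\cD(p)|_g$, so that $X_if(p)=0$ for $i=2,\dots,\m$ and the anisotropic tangent is the coordinate hyperplane $\{x_1=0\}$; in a generic privileged frame the limit is still $H$, but the bookkeeping is heavier.

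Second, the step you flag as ``hardest'' is indeed the crux, and ``should close the argument'' does not suffice. Weak convergence $\mu_\ep\rightharpoonup\mu_0$ together with Hausdorff convergence of the moving balls does \emph{not} yield $\mu_\ep(\widehat\B_{d_\ep}(\widehat x_\ep,1))\to\mu_0(\widehat\B_p(\widehat x,1))$: closed balls need not be continuity sets for $\mu_0=\|\omega(p)\|_g\,\cH^{\n-1}_\rE\res H$, since $H\cap\partial\widehat\B_p(z,1)$ can have positive $\cH^{\n-1}_\rE$-measure, and the centers $\widehat x_\ep$ vary with $\ep$. The paper avoids this by a two-sided argument on the integrals themselves. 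For the upper bound, one extracts a subsequence with $\delta_{1/r_k}F^{-1}(y_k)\to z\in\widehat\B_p(0,1)$, shows the rescaled domains $A_k$ are uniformly bounded, and uses dominated convergence on $\chi_{A_k}$ to bound by $\|\omega(p)\|_g\,\cH^{\n-1}_\rE(H\cap\widehat\B_p(z,1))$. For the lower bound, one picks a maximizer $z_0$, tests with the explicit centers $y_0^r=F(\delta_r z_0)$ and radii $\lambda r$ for $\lambda>1$, applies Fatou, and lets $\lambda\downarrow 1$. Theorem~\ref{teo:diameterIntro} is used on both sides to replace $\diam(\B(y,r))$ by $2r$ uniformly in $y$ near $p$. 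If you want to keep your measure-theoretic phrasing, you must supply an equally concrete mechanism for the boundary issue and the moving sup.
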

Characteristic points represent the ``singular points'' of $\Sigma$ (Definition~\ref{def:hnor}) and will 
be discussed right after Theorem~\ref{thm:areaS}. Regular points are introduced in Definition~\ref{def:equiregular}.
To prove Theorem~\ref{th:mainIntro}, more difficulties are hidden, since the formula for the spherical Federer density $\cs^{Q-1}$ needs uniform asymptotics for the diameters of sub-Riemannian balls.
More precisely, another result is necessary, showing that the diameters of a family
of sub-Riemannian balls with suitably small radius $r>0$ 
are ``uniformly close to $2r$'' in a neighborhood of a regular point. 

\begin{theorem}[Uniform estimates of ``small'' diameters]\label{teo:diameterIntro}
	Let $p$ be a regular point of a sub-Riemannian manifold $M$.
	Then there exists a neighborhood $T\subset M$ of $p$ such that for every $0<\ep<1$ there exists a radius $r_\ep>0$ such that
	\begin{equation}\label{eq:asymptDiam}
	2r(1-\ep)	\le {\rm diam}(B(q,r))\le 2r
\end{equation}
for every $q\in T$ and $0<r<r_\ep$.
\end{theorem}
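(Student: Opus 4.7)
The upper bound $\mathrm{diam}(B(q,r))\le 2r$ is immediate from the triangle inequality and holds in any metric space, so the substance of the theorem is the lower bound together with the uniformity of $r_\ep$ in~$q$. My plan is to reduce the problem to the nilpotent approximation at each point and then transfer the estimate by means of the paper's general criterion for uniform convergence of parametrised sub-Riemannian distances. A key preliminary fact is that in any Carnot group $(\widehat G,\widehat d)$ one has $\mathrm{diam}\bigl(\bar B_{\widehat d}(0,r)\bigr)=2r$: by homogeneity of the dilations it suffices to treat $r=1$, and then, fixing any unit vector $V$ in the first stratum, the horizontal one-parameter subgroup $t\mapsto \exp(tV)$ realises $\widehat d(0,\exp(\pm V))=1$, while the Baker--Campbell--Hausdorff identity $\exp(V)^{-1}\exp(-V)=\exp(-2V)$ combined with left invariance and $1$-homogeneity yields $\widehat d(\exp(V),\exp(-V))=2$.

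Fix a privileged orthonormal frame $\bX=(X_1,\dots,X_\n)$ on an open neighbourhood $U\subset M$ of $p$, together with the exponential coordinates of the first kind $F_{q,\bX}\colon V\to M$ based at each $q\in U$, and let $\widehat d_{q,\bX}$ be the homogeneous sub-Riemannian distance on the Carnot group determined by the nilpotent approximation $\widehat\bX^q$. The paper's uniform-convergence criterion for parametrised SR distances yields that
\[
d_q^\ep(x,y):=\tfrac{1}{\ep}\,d\bigl(F_{q,\bX}(\delta_\ep x),\,F_{q,\bX}(\delta_\ep y)\bigr)\;\xrightarrow[\ep\to 0^+]{}\;\widehat d_{q,\bX}(x,y),
\]
uniformly on compact subsets of $\R^\n\times\R^\n$ and uniformly in $q$ varying over a sufficiently small compact neighbourhood $T\Subset U$ of~$p$. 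Since $B(q,r)=F_{q,\bX}\bigl(\delta_r\{z\in\R^\n : d_q^r(0,z)<1\}\bigr)$, one has the exact identity
\[
\mathrm{diam}_d\bigl(B(q,r)\bigr)=r\cdot\mathrm{diam}_{d_q^r}\bigl(\{d_q^r(0,\cdot)<1\}\bigr),
\]
and the right-hand side tends to $2r$ uniformly in $q\in T$ as $r\to 0^+$, by the Carnot-group identity above together with the continuity of the diameter functional under Hausdorff convergence of balls. This produces the required $r_\ep$.

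The principal obstacle is the uniformity in~$q$: both the rescaled distance $d_q^\ep$ and its nilpotent limit $\widehat d_{q,\bX}$ depend on the basepoint, so the argument hinges on the paper's uniform-convergence criterion together with the continuous dependence of the nilpotent approximation $\widehat\bX^q$ on~$q$ at regular points. Once uniform convergence of the rescaled distances is in hand, the passage from convergence of distances to uniform control on the diameters is routine, as the unit $d_q^r$-balls converge in Hausdorff distance to $\bar B_{\widehat d_{q,\bX}}(0,1)$ uniformly in $q\in T$ and the diameter is continuous under such convergence.
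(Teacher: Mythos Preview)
Your overall strategy is the same as the paper's, but you have skipped the one step that the paper explicitly flags as nontrivial. The convergence criterion you invoke (Theorem~\ref{th:DVGen} and its consequence Theorem~\ref{thm:unifblowupIntro}) concerns the \emph{local} rescaled distances $\widetilde d_q^r$, built from curves that stay in the coordinate chart $F_{q,\bX}(V)$. Your quantity $d_q^\ep(x,y)=\ep^{-1}d\bigl(F_{q,\bX}(\delta_\ep x),F_{q,\bX}(\delta_\ep y)\bigr)$ uses the \emph{global} sub-Riemannian distance $d$ on $M$, and in general $d\le\widetilde d_q$ with possible strict inequality. The uniform convergence $\widetilde d_q^r\to\widehat d_q$ therefore gives you only $\limsup d_q^r\le\widehat d_q$; the matching lower bound, and hence the identity $B(q,r)=F_{q,\bX}\bigl(\delta_r\{d_q^r(0,\cdot)<1\}\bigr)$ that you assert, require knowing that $B(q,r)$ is contained in the chart image for all $q\in T$ and all small $r$, \emph{with $r$ bounded below uniformly in $q$}. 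This is exactly the content of Theorem~\ref{thm:TopolUniformradiusIntro} (the ``topological existence of uniform radius'') together with Proposition~\ref{prop:localdist}, and the paper singles it out in the introduction as the reason why the uniform nilpotent approximation does not by itself yield the diameter estimate.

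Concretely: for a single fixed $q$ the global and local distances agree on a small enough ball around $q$, so the pointwise blow-up is fine; the issue is that as $q$ varies over $T$ the radius on which they agree could in principle degenerate to zero, and nothing in your argument prevents this. Once you insert Corollary~\ref{cor:uniformradius} to obtain a uniform $r_0>0$ with $B(q,r_0)\subset F_{q,\bX}(V_0)$ for all $q\in T$, your argument becomes correct and essentially coincides with the paper's proof of Theorem~\ref{teo:diameter}. The Carnot-group fact $\mathrm{diam}\,\widehat B_q(0,\rho)=2\rho$ that you isolate is used implicitly there as well.
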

The uniform asymptotics \eqref{eq:asymptDiam} are established in Theorem~\ref{teo:diameter} and we believe they have an independent interest in the study of equiregular sub-Riemannian manifolds. The proof of  \eqref{eq:asymptDiam} in turn needs two additional results. The first one is a {\em uniform nilpotent approximation} with respect to the blow-up point, which extends the well-known nilpotent approximations at fixed regular points, for which a wide literature is available, see Section~\ref{sect:nilpapp} for more information.

\begin{theorem}[Uniform nilpotent approximation]\label{thm:unifblowupIntro}
	Let $(M,\mathcal D, g)$ be a sub-Riemannian manifold and let $p\in M$ be a regular point.
	We consider a privileged orthonormal frame $\bX=(X_1,\dots, X_\n)$ in an open neighborhood of $p\in M$ 
	and denote by $F_\bX$ a system of uniform exponential coordinates on a fixed open set $U$ containing $p$ (Definition~\ref{def:unifexpcoord}).
	We define the vector fields in local coordinates $\widetilde X_i^q= (F_\bX(q,\cdot)^{-1})_\ast X_i$, along with the rescaled vector fields $\widetilde X_i^{q,r}=r^{w_i}(\delta_{1/r})_*\widetilde X_i^q$ for $i=1,\ldots,\n$, $q\in U$ and $r>0$, see \eqref{eq:Xcappuccio}.
	Then for every bounded open set $A\subset\R^\n$ the following statements hold.
	\begin{enumerate}
		\item 
		The rescaled frame $\widetilde \bX^{q,r}=(\widetilde X^{q,r}_1,\ldots,\widetilde X^{q,r}_\n)$ converges to the nilpotent approximation $\widehat \bX^q=(\widehat X^q_1,\ldots,\widehat X^q_\n)$ (Definition~\ref{def:nilpApprox}) on the subset $A\subset\R^n$ in the $C^\infty_{\rm loc}$-topology as $r\to0$, uniformly with respect to $q$ which varies in any compact set of $U$.
		\item 
		The rescaled horizontal frame $\widetilde \bX^{q,r}_h=(\widetilde X^{q,r}_1,\ldots,\widetilde X^{q,r}_\m)$ induces
		a local distance $\widetilde d_q^r$ which converges to $\widehat d_q$ (Definition~\ref{def:nilpApprox}) in $L^\infty(A\times A)$ as $r\to 0$, uniformly as $q$ varies in any compact set of $U$.
	\end{enumerate}
\end{theorem}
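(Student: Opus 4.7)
The plan is to reduce both statements to the behaviour of the Taylor expansions of the pulled back fields $\widetilde X_i^q$ in the uniform exponential coordinates, and then to apply the general criterion for the convergence of parametrized sub-Riemannian distances that the paper develops as one of its main technical tools.

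For item~(1), I would write each field in the form $\widetilde X_i^q(x)=\sum_{j=1}^\n a_{ij}^q(x)\,\partial_{x_j}$ and expand $a_{ij}^q$ in powers of $x$. Because $F_\bX$ is jointly smooth in $(q,x)$ by Definition~\ref{def:unifexpcoord}, every Taylor coefficient $c_{ij,\alpha}(q)$ is smooth in $q$, hence bounded together with all its $q$-derivatives on every compact $K\subset U$. Since $\bX$ is a privileged frame at each point, only multi-indices $\alpha$ with weighted degree $w_1\alpha_1+\cdots+w_\n\alpha_\n\ge w_j-w_i$ can appear with nonzero coefficient, and those with equality assemble exactly into the coefficients of the nilpotent approximation $\widehat X_i^q$. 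A direct computation of the rescaling then gives
\[
\widetilde X_i^{q,r}(x)=\sum_{j=1}^\n\sum_{\alpha}c_{ij,\alpha}(q)\,r^{\,w_i-w_j+w_1\alpha_1+\cdots+w_\n\alpha_\n}\,x^\alpha\,\partial_{x_j}.
\]
Every summand with weighted degree strictly larger than $w_j-w_i$ carries a strictly positive power of $r$; on any bounded $A\subset\R^\n$ and for $q\in K$ these terms vanish uniformly in $r\to 0^+$, and the same estimate survives after any finite number of $x$-derivatives, because the truncated expansions are polynomials whose coefficients are controlled by the same bounds on $c_{ij,\alpha}(q)$. This yields the $C^\infty_{\rm loc}$-convergence claimed in~(1), uniformly in $q\in K$.

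For item~(2), I would invoke the general criterion for uniform convergence of parametrized sub-Riemannian distances promised in the abstract. Having established the $C^\infty_{\rm loc}$-convergence of the horizontal frames $\widetilde\bX_h^{q,r}\to\widehat\bX_h^q$ uniformly for $q\in K$, that criterion delivers the $L^\infty$-convergence of the induced sub-unit distances on $A\times A$, again uniformly in $q\in K$. The bracket-generating hypothesis needed to apply the criterion holds uniformly around a regular point $p$, since the step and the growth vector of $\widehat\bX_h^q$ are constant in a neighborhood of $p$, so the ball-box theorem applies with constants depending continuously on $q$.

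The main difficulty I expect is precisely this uniformity in the basepoint. The classical pointwise nilpotent approximation at a single regular point is standard, and so is the pointwise convergence of the rescaled distances to the Carnot tangent distance; the new content here is that every quantitative constant, from the order of vanishing of the Taylor remainders to the admissibility thresholds hidden in the ball-box estimates, has to be chosen once and for all in a compact neighborhood of $p$. Working from the outset with uniform exponential coordinates $F_\bX(q,\cdot)$, rather than with privileged coordinates chosen separately at each $q$, is exactly what turns each such constant into a continuous (in fact smooth) function of $q$, hence locally bounded, thereby promoting the classical pointwise statement into the required uniform one.
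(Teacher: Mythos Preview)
Your approach to item~(1) is essentially the paper's: decompose $a_{ij}^q$ into its weighted-homogeneous part of degree $w_j-w_i$ (which assembles into $\widehat X_i^q$) plus a higher-order remainder, and use the joint smoothness of $F_\bX$ in $(q,x)$ to control the remainder uniformly in $q\in K$. The paper phrases this via the decomposition $a_{ij}^q=b_{ij}^q+\rho_{ij}^q$ of Theorem~\ref{th:tangentspace} rather than as a formal Taylor series, but the content is identical.

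For item~(2) your strategy is also the paper's---invoke the general convergence criterion for parametrized sub-Riemannian distances (Theorem~\ref{th:DVGen})---but you skip two steps that the paper carries out explicitly and that are not automatic. First, that criterion is stated for frames on a fixed Euclidean ball $B_\rE(x_0,R_0)$ and only produces convergence on a \emph{smaller} ball $B_\rE(x_0,r_0)\times B_\rE(x_0,r_0)$, not on an arbitrary bounded $A\times A$. Second, the limiting distance it produces is the \emph{local} sub-Riemannian distance induced by $\widehat\bX_h^q$ on $B_\rE(x_0,R_0)$, whereas the target $\widehat d_q$ is the global Carnot distance on $\R^\n$; these can differ near the boundary. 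The paper handles this by working on $\widehat B_q(0,4)$, showing via a short length-of-curve argument (as in Proposition~\ref{prop:localdist}) that the local distance agrees with $\widehat d_q$ on $\widehat B_q(0,1)$, applying Theorem~\ref{th:DVGen} there, and then invoking a rescaling argument (homogeneity of $\widehat d_q$ and of the dilations) to pass from the small ball to an arbitrary bounded $A$. Your sentence ``that criterion delivers the $L^\infty$-convergence \ldots\ on $A\times A$'' conflates these steps; they need to be spelled out.
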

We have denoted by $\widehat d_q\colon \R^n\times\R^\n\to[0,+\infty)$ the tangent sub-Riemannian distance (Definition~\ref{def:nilpApprox})
and by $\widetilde d^r_q$ the local distance (Definition~\ref{def:balls}) induced by the rescaled frame $\widetilde \bX^{q,r}$.
The uniform nilpotent approximation is proved in Theorem~\ref{thm:unifblowup}, where more details are added in the statement.
Theorem~\ref{thm:unifblowupIntro} is somehow considered known to the experts, but we were unable to find its proof.
We notice for instance that the estimate (85) in \cite{Bell96} follows from Theorem~\ref{thm:unifblowupIntro}(ii).
According to \cite[Section~8]{Bell96}, we can actually think of
Theorem~\ref{thm:unifblowupIntro}(ii) as a uniform Gromov-Hausdorff convergence of the SR manifold to the metric tangent space.
When a regular point $p\in M$ is fixed (Definition~\ref{def:equiregular}), Theorem~\ref{thm:unifblowupIntro} proves that the rescaled distances $\widetilde d^r_q$ uniformly converge to the tangent sub-Riemannian distance $\widehat d_q$ (Definition~\ref{def:nilpApprox}) on compact sets of their domains
and also uniformly as $q$ varies in a compact neighborhood of $p$. 

A nontrivial technical result behind the uniform nilpotent approximation is a general uniform convergence theorem for families of sub-Riemannian distances, established in Theorem~\ref{th:DVGen}, see Section~\ref{sec:blowup} for more information.

Somehow surprisingly, the uniform convergence of the rescaled distances $\widetilde d_q^r$ does not immediately imply the uniform diameter estimate of Theorem~\ref{teo:diameterIntro}.
The main issue is that the local sub-Riemannian distance (Definition~\ref{def:balls}) relative to a bounded open set may be larger than the sub-Riemannian distance on the whole manifold. Only at sufficiently small scale, the two distances coincide, as proved in Proposition~\ref{prop:localdist}. 
To apply this proposition, we have to show that the radii of the sub-Riemannian balls contained in the ``moving exponential local charts'' {\em do not degenerate as the center of the sub-Riemannian ball and the chart vary}.
In other words, we have to guarantee the existence of a 
``common, moving sub-Riemannian ball with fixed radius'', where the uniform convergence of the sub-Riemannian distances takes place.
Exactly at this point the second theorem to show \eqref{eq:asymptDiam} appears. 
The existence of the ``uniform radius'' can be actually established for a general ``topological exponential mapping'' taking values in a length metric space.

\begin{theorem}[Topological existence of uniform radius]\label{thm:TopolUniformradiusIntro}
Let $M$ be a length metric space and let $p\in M$. Let $T\subset M$ be an open neighborhood of $p$
	and let $A\subset \R^\n$ be an open neighborhood of $0$. 
	We consider a mapping 
	$E\colon T\times A\to M$ such that
	\begin{enumerate}
		\item\label{it1contIntro}
		$E$ is continuous,
		\item\label{it2Eq0Intro}
		$E(q,0)=q$ for every $q\in T$,
		\item\label{it3homIntro}
		the mapping $E(q,\cdot)\colon A\to E(q,A)$ is a homeomorphism for every $q\in T$.
	\end{enumerate}
	Then there exist a bounded open neighborhood $V\subset A$ of $0$, an open neighborhood $U\subset T$ of $p$ such that the function
	\begin{equation}\label{eq:r_qpositiveIntro}
		U\ni q\mapsto R(q)=\sup\{t>0: B(q,t)\subset E(q,V) \}\in(0,+\infty)
	\end{equation}
	is well defined and lower semicontinuous.
	In particular, there exist $r_0>0$ and $\ep_0>0$ such that 
	$B(q,r_0)\subset E(q,V)$ for every $q\in B(p,\ep_0)$.
\end{theorem}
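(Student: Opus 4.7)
The plan is to express $R(q)$ as the distance from $q$ to a compact, continuously varying subset of $M$, from which both positivity and lower semicontinuity will follow immediately.

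First I would choose $V$ to be a Euclidean ball $B_{\mathrm{eucl}}(0,\rho)$ with $\rho>0$ small enough that $\overline V\subset A$, so that $E(\{p\}\times \overline V)\subset M$ is compact. Since $E(p,\cdot)$ is injective on $\overline V$ with $E(p,0)=p$ and $0\notin\partial V$, one has $p\notin E(\{p\}\times\partial V)$ and hence $d\bigl(p, E(\{p\}\times\partial V)\bigr)>0$. In any metric space the identity
\[
R(q)\;=\;d\bigl(q,\, M\setminus E(q,V)\bigr)
\]
holds by the very definition of the open ball. The crucial step I would establish next is that, on a neighborhood of $p$,
\[
R(q)\;=\;d\bigl(q,\, E(\{q\}\times\partial V)\bigr).
\]
The inequality $\le$ is immediate because $E(\{q\}\times\partial V)\subset M\setminus E(q,V)$ by injectivity of $E(q,\cdot)$ on $\overline V$. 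For the reverse inequality I would use the length-metric hypothesis: given $y\in M\setminus E(q,\overline V)$, a near-length-minimizing curve from $q$ to $y$ joins $q\in E(q,V)$ to a point outside $\overline{E(q,V)}$, so by connectedness it must meet the topological boundary $\partial E(q,V)\subset M$ at some parameter $s_0$; since $E(q,V)$ is open in $M$ (an invariance-of-domain-type input, automatic in the paper's sub-Riemannian setting because $E$ arises from exponential coordinates on a manifold) and $\overline{E(q,V)}\subset E(q,\overline V)$ by compactness of $\overline V$, one obtains $\partial E(q,V)\subset E(\{q\}\times\partial V)$, so the arc-length from $q$ to $\gamma(s_0)$ is at least $d(q,E(\{q\}\times\partial V))$, and letting the curve approach length-minimality yields $d(q,y)\ge d(q,E(\{q\}\times\partial V))$.

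Once this identification is in hand, continuity of $R$ follows from continuity of $q\mapsto E(\{q\}\times\partial V)$ into the compact subsets of $M$ equipped with the Hausdorff distance — itself a consequence of uniform continuity of $E$ on a compact set $K\times\partial V$ for a relatively compact neighborhood $K\subset T$ of $p$ — combined with the elementary estimate $|d(q,S)-d(q',S')|\le d(q,q')+d_H(S,S')$. Since $R$ is then continuous and $R(p)>0$, there is an open neighborhood $U\subset T$ of $p$ on which $R\ge r_0:=R(p)/2>0$, and then picking $\ep_0>0$ with $B(p,\ep_0)\subset U$ delivers the claimed uniform inclusions $B(q,r_0)\subset E(q,V)$ for every $q\in B(p,\ep_0)$. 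The hard part will be the identification $R(q)=d(q,E(\{q\}\times\partial V))$: the length-metric hypothesis is what produces curves realizing small distances, but the containment $\partial E(q,V)\subset E(\{q\}\times\partial V)$ rests on openness of $E(q,V)$ in $M$, a form of invariance of domain that must be supplied by the local topological structure of the ambient space.
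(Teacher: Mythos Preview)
Your approach is correct and genuinely different from the paper's. The paper does not identify $R(q)$ with a distance to a boundary set; instead it fixes $t<R(q)$, finds $s\in(t,R(q))$ with $\B(q,s)\subset E(q,V)$, shows $\B(q,s)\cap E(q',\partial V)=\emptyset$ for $q'$ near $q$, and then pulls everything back to $\R^\n$ via $E(p,\cdot)^{-1}$ to run a degree-theory homotopy argument along a curve (supplied by the length-space hypothesis) from $q$ to $q'$: since the homotopy avoids $w\in E(p,\cdot)^{-1}(B(q,s))$ on $\partial V$, the degree is constant and equals $\pm1$, forcing $B(q,s)\subset E(q',V)$ and hence $R(q')\ge t$. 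Your route---the formula $R(q)=d(q,E(\{q\}\times\partial V))$ plus Hausdorff continuity of $q\mapsto E(\{q\}\times\partial V)$---is shorter, more transparent, and actually yields \emph{continuity} of $R$, which is stronger than the lower semicontinuity claimed in the statement. The invariance-of-domain input you flag (openness of $E(q,V)$ in $M$) is not a weakness peculiar to your argument: the paper's proof already invokes it implicitly, both in asserting $R(q)>0$ from conditions (2)--(3) alone and in writing $\partial E(q,V)=E(q,\partial V)$; without it the theorem as stated can fail (take $M=\R^{\n+1}$ and $E(q,x)=q+(x,0)$). In the paper's intended application $M$ is a smooth manifold and $E$ is the exponential map, so openness is automatic and both proofs go through.
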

The arguments to prove this theorem also rely on a suitable use of degree theory, see Theorem~\ref{thm:TopolUniformradius}.
Combining the measure theoretic area formula of Theorem~\ref{th:sphericalarea} and the double blow-up
of Theorem~\ref{th:main}, we finally obtain the area formula \eqref{eq:SRareaperimeter}. More details about the application of Theorem~\ref{th:sphericalarea} are given in Remark~\ref{rem:metricarea}.

\begin{theorem}[Area formula]\label{thm:areaS}
Let $(M,\cD,g,\omega)$ be an equiregular sub-Riemannian measure manifold of Hausdorff dimension $Q$ and denote also by $g$ a Riemannian metric that coincides with the sub-Riemannian metric on horizontal directions.
Let $\Sigma\subset M$ be an oriented hypersurface of class $C^1$ embedded in $M$ and let $A\subset \Sigma$ be a Borel set containing an $\cS^{Q-1}$ negligible subset of characteristic points. 
If $\nu$ is a continuous unit normal field on $\Sigma$, then we have
\beq\label{eq:SRareaperimeter}
\sigma^{SR}_\Sigma(A)=\int_{A\cap\Sigma} \|\omega(q)\|_g\,\beta_{d,g}(\nu_\cD(q))\,d\cS^{Q-1}(q),
\eeq
where the spherical measure is introduced in Definition~\ref{d:sizephi}.
\end{theorem}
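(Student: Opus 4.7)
The proof will combine two main ingredients already established earlier: the measure-theoretic spherical area formula of Theorem~\ref{th:sphericalarea}, which reduces the computation of $\sigma^{SR}_\Sigma(A)$ to the integration of the spherical Federer density $\cs^{Q-1}(\sigma^{SR}_\Sigma,\cdot)$ against $\cS^{Q-1}$, and the double blow-up identity of Theorem~\ref{th:main} (stated in the introduction as Theorem~\ref{th:mainIntro}), which evaluates that density pointwise at every regular non-characteristic point. Since $M$ is equiregular, every point of $M$ is regular, so Theorem~\ref{th:main} applies at every non-characteristic point of $\Sigma$.

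First I would regard $\sigma^{SR}_\Sigma$ as a locally finite Borel measure on $(M,d)$ supported on $\Sigma$. Being defined by integration of the bounded Borel density $q\mapsto \|\omega(q)\|_{\bar g}|\nu_\cD(q)|_{\bar g}$ against the Riemannian surface measure on the $C^1$ hypersurface $\Sigma$, it is a Radon measure. To invoke Theorem~\ref{th:sphericalarea}, as elaborated in Remark~\ref{rem:metricarea}, one has to verify the absolute continuity $\sigma^{SR}_\Sigma\ll\cS^{Q-1}$ on $A$ together with the Borel measurability and local finiteness of the density $\cs^{Q-1}(\sigma^{SR}_\Sigma,\cdot)$. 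Absolute continuity on the non-characteristic part of $\Sigma$ follows from the local upper density bound
$$\limsup_{r\to 0^+}\frac{\sigma^{SR}_\Sigma(B(q,r))}{r^{Q-1}}<\infty,$$
which in turn rests on the uniform nilpotent approximation of Theorem~\ref{thm:unifblowupIntro} and the uniform diameter estimate of Theorem~\ref{teo:diameterIntro}. At characteristic points of $\Sigma$ the integrand $\|\omega\|_{\bar g}|\nu_\cD|_{\bar g}$ vanishes, so $\sigma^{SR}_\Sigma$ already assigns zero mass there and absolute continuity is automatic. Theorem~\ref{th:sphericalarea} then yields
$$\sigma^{SR}_\Sigma(A)=\int_{A}\cs^{Q-1}(\sigma^{SR}_\Sigma, q)\,d\cS^{Q-1}(q).$$

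Next, decompose $A=A_r\cup A_c$, where $A_c\subset A$ is the set of characteristic points (with $\cS^{Q-1}(A_c)=0$ by hypothesis) and $A_r:=A\setminus A_c$. Every $q\in A_r$ is both a regular point of $M$ and a non-characteristic point of $\Sigma$, so Theorem~\ref{th:main} gives
$$\cs^{Q-1}(\sigma^{SR}_\Sigma, q)=\|\omega(q)\|_g\,\beta_{d,g}(\nu_\cD(q)) \qquad \text{for every } q\in A_r.$$
Plugging this into the integral above, and using $\cS^{Q-1}(A_c)=0$ to extend the integrand across $A_c$ without altering the value of the integral, produces the area formula \eqref{eq:SRareaperimeter}.

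The principal obstacle is not the conceptual structure, which is essentially forced once Theorem~\ref{th:main} is available, but rather the careful verification of the hypotheses of Theorem~\ref{th:sphericalarea}: in particular the Borel measurability of the map $q\mapsto \cs^{Q-1}(\sigma^{SR}_\Sigma,q)$ on the non-characteristic locus and the absolute continuity $\sigma^{SR}_\Sigma\ll\cS^{Q-1}$ on $A$. The explicit $\cS^{Q-1}$-negligibility hypothesis on the characteristic part of $A$ is imposed precisely because the blow-up analysis of Theorem~\ref{th:main} breaks down at characteristic points, so absolute continuity on that set cannot be guaranteed by the present methods and must be assumed.
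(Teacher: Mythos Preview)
Your proposal is correct and follows essentially the same route as the paper: combine the measure-theoretic area formula (Theorem~\ref{th:sphericalarea}) with the double blow-up (Theorem~\ref{th:main}), after checking the hypotheses of the former as outlined in Remark~\ref{rem:metricarea}. The only cosmetic difference is that the paper obtains the absolute continuity $\sigma^{SR}_\Sigma\res A\ll\cS^{Q-1}\res A$ directly from the finiteness of the Federer density established in \eqref{eq:theta=beta} via \cite[Proposition~3.3]{LecMag21}, rather than through the limsup density bound you sketch; the two arguments are equivalent in content.
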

We notice that the set of characteristic points is $\cS^{Q-1}$ negligible for $C^1$ smooth hypersurfaces embedded in stratified groups, \cite{Mag5}. The extension of this result to SR manifolds is not straightforward, since in general they are not locally bi-Lipschitz equivalent to stratified groups, \cite{Bell96,Pansu82,Var1981}, see also \cite{LeDOttWar2014}. It is then natural to deserve further study to the negligibility issue.

As a simple consequence of the previous area formula, for a $C^1$ smooth open set $\Omega$ with boundary $\partial\Omega$, formula \eqref{eq:POmega} combined with \eqref{eq:SRareaperimeter} yields
\beq\label{eq:POmegaSpherical}
\|D_{\omega,g}\bu_\Omega\|(A)=\int_{A\cap \partial \Omega} \|\omega(q)\|_g\,\beta_{d,g}(\nu_\cD(q))\,d\cS^{Q-1}(q),
\eeq
for every Borel set $A\subset M$ such that the set of characteristic points in the intersection $\partial\Omega\cap A$ is $\cS^{Q-1}$ negligible. Notice that \eqref{eq:POmegaSpherical} also extends 
the area formula for perimeters in stratified groups equipped with a general volume form, in place of the standard Haar measure.

\section{Some basic notions and known facts}\label{sect:notions and results}

In this section we introduce {\em sub-Riemannian measure manifolds}, along with some basic notions.
We also introduce the important notion of exponential coordinates of the first kind, that will be useful
in the sequel.

To define a sub-Riemannian manifold without referring to local frame of vector fields,
we use the notion of Euclidean vector bundle, that is a vector bundle 
equipped with a smooth scalar product.

\begin{definition}[Sub-Riemannian structure]\label{def:SRstructure}
	Let $M$ be a smooth, connected manifold and assume that 
	$f\colon{\bf U}\to TM$ is a morphism of vector bundles, where ${\bf U}$ is a Euclidean
	vector bundle with base $M$. As a morphism, $f$ is smooth, linear on fibers and verifies
	$f_p({\bf U}_p)\subset T_pM$ for each $p\in M$, where ${\bf U}_p$ is the fiber of ${\bf U}$ at $p$. We say that $(f,{\bf U})$ is a {\em sub-Riemannian structure} on $M$.
	The family of {\em horizontal vector fields} are 
	\begin{equation}\label{def:calD}
	\cD\coloneqq\left\{f\circ\sigma : \sigma\in\Gamma({\bf U})\right\},
	\end{equation}
	where $\Gamma({\bf U})$ is the module of sections of ${\bf U}$.
	We assume the {Chow's condition} on $\cD$: 
	\begin{equation}\label{eq:Hormander}
	\Lie_p(\cD)=T_pM\qquad \text{for all}\; p\in M.
	\end{equation}
	For each $p\in M$, we set the fiber 
	$$
	\cD_p=\set{X(p)\in T_pM:\, X\in\cD}.
	$$
	We define the function 
	$G_p\colon T_pM\to[0,+\infty]$ as 
	\[
	G_p(v)=\begin{cases} 
	\min\{|u|_p^2: v=f(u), u\in\bU_p \}& v\in \mathcal D_p, \\
	+\infty & \text{\rm otherwise}.
	\end{cases}
	\]
	One may easily notice that $\sqrt{G_p}\colon \cD_p\to[0,+\infty)$ is a norm
	that satisfies the parallelogram identity,
	hence $G_p$ is a quadratic form.
	The family of scalar products
	$g_p\colon \cD_p\times\cD_p\to \R$
	such that $G_p(v)=g_p(v,v)$ for
	all $p\in M$ and $v\in \cD_p$
	defines a {\em sub-Riemannian metric} $g$ on $M$. The triple 
	$(M,\cD,g)$ is called {\em sub-Riemannian manifold}.
\end{definition}

For $q_1,q_2\in M$, we introduce the family
$\cA_{q_1,q_2}$ of all AC curves 
$\gamma\colon [0,1]\to M$ such that 
$\gamma(0)=q_1$, $\gamma(1)=q_2$ and
\beq\label{eq:gammaD}
\dot\gamma(t)\in\cD_{\gamma(t)} 
\eeq
for a.e.\ $t\in[0,1]$.
Due to the celebrated Chow's theorem,
$\cA_{q_1,q_2}\neq\emptyset$ for
all $q_1,q_2\in M$.
As a consequence, the infimum
\[
d(q_1,q_2)=\inf\set{\int_0^1 \sqrt{g_{\gamma(t)}(\dot\gamma(t),\dot\gamma(t))}dt:\,\gamma\in\cA_{q_1,q_2} }
\]
is always a well defined real number. 
Actually, it can be shown that $d$ is a distance on $M$, well known as {\em sub-Riemannian distance}. 
Any AC curve $\gamma$ that satisfies
\eqref{eq:gammaD} for a.e.\ $t$ is called {\em horizontal curve}.
The closed ball and the open ball in $M$ are denoted by
\beq \label{key}
\B(q,r)=\{z\in M: d(q,z)\le r\}\quad \text{and}\quad 
B(q,r)=\{z\in M: d(q,z)< r\},
\eeq 
respectively.

The sub-Riemannian distance can be also introduced with respect to a family of linearly independent vector fields of $\R^\n$.

\begin{definition}[Sub-Riemannian distance with respect to vector fields]\label{def:SRdX_j}
	Let $\Omega\subset\R^\n$ be a connected open set, $\m<\n$  and let $\bX=(X_1,\ldots,X_\m)$ be an ordered collection of 
	everywhere linearly independent smooth vector fields on $\Omega$. We also assume that the iterated Lie brackets of 
	these vector fields, up to some degree of iteration $s$, span $\R^\n$ at every point $x\in\Omega$.
	Then we choose $x,y\in\Omega$ and consider the family $\cF(x,y)$ of all absolutely continuous curves 
	$\gamma:[0,1]\to\Omega$ such that $\gamma(0)=x$, $\gamma(1)=y$ and 
	\beq\label{eq:gammah}
	\gamma'(t)=\sum_{j=1}^\m h_j(t)\,X_j(\gamma(t)) 
	\eeq
	for a.e.\ $t\in[0,1]$. The sub-Riemannian distance $d(x,y)$ between $x$ and $y$ in $\Omega$ is then 
	\[
	\inf\set{\|h\|_\infty: \gamma\in\cF(x,y)\;\text{and satisfies \eqref{eq:gammah} a.e.} },
	\]
	where $\|h\|_\infty$ is the $L^\infty$ norm of the measurable function $t\to\sqrt{\sum_{j=1}^{\m}h_j(t)^2}$.
\end{definition}

\begin{rmk}
The sub-Riemannian distance with respect to a sub-Riemannian metric and the one 
with respect to vector fields (Definition~\ref{def:SRdX_j}) are clearly
related, see Proposition~\ref{prop:localdist}. This is the case when the vector fields are considered orthonormal
with respect to the sub-Riemannian metric.
\end{rmk}

\begin{rmk}\label{rem:equivSRX_j}
It is easy to notice that the distance of Definition~\ref{def:SRdX_j} is equivalent to the one where we consider time minimizing
absolutely continuous curves $\gamma:[0,T]\to\Omega$ satisfying \eqref{eq:gammah}, connecting $x$ with $y$ and such that
$\|h\|_\infty\le 1$ a.e.  
\end{rmk}

\begin{definition}[Equiregular sub-Riemannian manifolds and regular points]\label{def:equiregular}
Let us consider the sub-Riemannian structure $(f,{\bf U})$ on a smooth, connected manifold $M$.
	If $\cD$ is the associated family of horizontal vector fields, we 
	recursively define $\cD^1=\cD$ and
	\[
	\cD^{k+1}=\cD^k+[\cD^k,\cD]
	\]
	Setting  $\cD_p^k=\set{X(p)\in T_pM:\, X\in\cD^k}$, we obtain a flag at $p\in M$.
	Due to the Lie bracket generating condition, there exists $k_p\in\N$ such that 
	\[
	\cD^1_p\subset \cD^2_p\subset\cdots\subset\cD_p^{k_p}=T_pM.
	\]
	If $g$ is a sub-Riemannian metric on $M$, compatible with $(f,{\bf U})$, 
	we say that $(M,\mathcal D, g)$ is an \emph{equiregular sub-Riemannian manifold of step $s\in \mathbb N$} if $\mathcal D^s_p=T_pM$ for every $p\in M$ and there exist some positive integers $\n_1, \n_2,\dots, \n_s$ such that $\dim \mathcal D^k_p=\n_k$ for every $k=1,\dots, s$ and every $p\in M$.
	A {\em regular point} $p\in M$ has an open neighborhood $\mathcal U\subset M$ that
	is also an equiregular sub-Riemannian manifold,
	when equipped with the restriction of the sub-Riemannian structure.
\end{definition}

The triple $(M,\cD,g)$ always denotes an $\n$-dimensional, smooth and connected SR manifold of step $s$. A fixed Riemannian metric on $M$ that extends $g$ is also understood and it is denoted by the same symbol, unless otherwise stated. The sub-Riemannian distance associated with the sub-Riemannian manifold $(M,\cD,g)$ is denoted by $d$. 

For equiregular sub-Riemannian manifolds, we also set $\m=\n_1=\dim\cD_p$ for every $p\in M$, that is called {\em the rank of $\cD$}. By definition of $\cD$, for every $p\in M$ there exists a neighborhood $U$ of $p$ and smooth vector fields $X_1,\dots,X_\m$ on $U$ such that $\mathcal D_q=\mathrm{span} \{X_1(q),\dots, X_\m(q)\}$ for every $q\in U$. We say that $(X_1,\dots,X_\m)$ is a {\em local frame} of $\cD$, that is also Lie bracket generating. 

\begin{definition}[Length of iterated Lie brackets]\rm
	Let $(X_1,\ldots,X_\m)$ be a local frame for $\cD$.
	Whenever a multi-index $I=(i_1,\ldots,i_k)\in\{1,2,\ldots,\m\}^k$ 
	is fixed, we say that
	\[
	X_I=[\ldots[[X_{i_1},X_{i_2}],X_{i_3}],\ldots],X_{i_k}]
	\] 
	is a vector field of {\em length $k$} with respect to $(X_1,\ldots,X_\m)$.
\end{definition}

\begin{definition}
	A \emph{sub-Riemannian measure manifold} is a quadruple $(M,\mathcal D,g,\omega)$ such that $(M, \mathcal D, g)$ is an oriented sub-Riemannian manifold and $\omega$ is a positive volume form on $M$, namely a non-vanishing smooth $\n$-form on $M$ such that $\int_M f \omega\ge 0$, 
	for any nonnegative  $f\in C_c(M)$. We say that $(M,\mathcal D, g, \omega)$ is \emph{equiregular}, if so is $(M,\mathcal D, g)$.
\end{definition}

The following definition can be found e.g.\ in \cite[Definition 10.45]{ABB20}.

\begin{definition}[Privileged frame]\label{def:privcoord}
	Let $(M,\mathcal D, g)$ be an equiregular sub-Riemannian manifold of step $s$ and let $U\subset M$ be an open set. We say that a frame of smooth vector fields $(X_1,\dots, X_\n)$ is {\em privileged with respect to $\mathcal D$ on $U$} if the following conditions are satisfied.
	\begin{itemize}
		\item[(i)] For every $q\in U$, the vectors $(X_1(q),\dots, X_\m(q))$ form
		a basis for $\mathcal D_q$.
		\item[(ii)]
		For every $q\in U$ and $1\le j\le s$, the vector
		fields $X_{\n_{j-1}+1},\ldots,X_{\n_j}$ are iterated Lie brackets of $X_1,X_2,\ldots,X_\m$ of length $j$, where 
		we have set $\n_0=0$ and $\n_j=\dim\cD^j_q$ for $j=1,\ldots,s$.
		\item[(iii)] For every $q\in U$ and $j=1,\dots, s$, the vectors 
		$(X_{1}(q),\dots, X_{\n_j}(q))$ form a basis of $\cD^j(q)$.
	\end{itemize} 
\end{definition}

\begin{rmk}
	A standard argument shows that, locally, we can always find a privileged frame for an equiregular sub-Riemannian manifold. Indeed, consider a local frame $(X_1,\dots, X_\m)$
	of $\mathcal D$ and choose $X_{\m+1}, \dots, X_{\n_2}$ among the Lie brackets of $X_1, \dots, X_\m$
	such that $(X_1,\ldots,X_{\n_2})$ is a local frame of $\cD^2$.
	The analogous argument can be repeated for the subsequent $\cD^j$'s,
	to get the privileged frame.
\end{rmk}

\begin{lemma}\label{lemma:estensioneadattata}
	Let $(M,\mathcal D, g)$ be an equiregular sub-Riemannian manifold, let $p\in M$, and let $(v_1,\dots,v_\m)$ be an orthonormal basis for $\mathcal D_p$. We denote by $g$ also a  Riemannian metric that extends the fixed sub-Riemannian metric on $M$. Then there exist a neighborhood $U$ of $p$ and a privileged orthonormal frame $(X_1,\dots,X_\n)$ on $U$
	such that $X_i(p)=v_i$ for every $i=1,\dots, \m$.
\end{lemma}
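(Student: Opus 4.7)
My plan is to first fix the horizontal block so that it matches the prescribed $v_i$, and then complete the frame by induction on the degree $j=2,\dots,s$.

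For the horizontal part I would fix any smooth local frame $(Y_1,\dots,Y_\m)$ of $\cD$ near $p$, apply Gram--Schmidt with respect to $g$ to obtain an orthonormal local frame $(Z_1,\dots,Z_\m)$ of $\cD$, and then compose with the unique constant matrix $A\in O(\m)$ sending $(Z_1(p),\dots,Z_\m(p))$ to $(v_1,\dots,v_\m)$. Setting $X_i\coloneqq\sum_{j=1}^\m A_{ij}Z_j$, the constant orthogonality of $A$ guarantees that $(X_1,\dots,X_\m)$ is still a smooth $g$-orthonormal local frame of $\cD$, and by construction $X_i(p)=v_i$ for $i=1,\dots,\m$.

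For the inductive step I would assume that $(X_1,\dots,X_{\n_{j-1}})$ is already a $g$-orthonormal local frame of $\cD^{j-1}$ on a neighborhood $U_{j-1}$ of $p$, with horizontal prefix as above. By induction on $j$ the space $\cD^{j-1}$ is spanned by iterated brackets of $X_1,\dots,X_\m$ of length $\le j-1$, so the quotient $\cD^j_p/\cD^{j-1}_p$ is spanned by classes of length-$j$ iterated brackets of $X_1,\dots,X_\m$. I then select length-$j$ brackets $B_{\n_{j-1}+1},\dots,B_{\n_j}$ whose classes at $p$ form a basis of this quotient. By continuity and equiregularity of $\cD^j$, the concatenation $(X_1,\dots,X_{\n_{j-1}},B_{\n_{j-1}+1},\dots,B_{\n_j})$ is a local frame of $\cD^j$ on a possibly smaller neighborhood $U_j\subset U_{j-1}$ of $p$. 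Applying Gram--Schmidt relative to $g$, starting from the already orthonormal prefix, leaves $(X_1,\dots,X_{\n_{j-1}})$ unchanged and produces smooth fields $X_{\n_{j-1}+1},\dots,X_{\n_j}$ that complete the $g$-orthonormal frame of $\cD^j$ on $U_j$. Iterating through $j=s$ yields the desired frame $(X_1,\dots,X_\n)$.

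Conditions (i) and (iii) of Definition~\ref{def:privcoord}, together with the initial condition $X_i(p)=v_i$, are immediate from the construction. Each $X_i$ with $\n_{j-1}<i\le\n_j$ is a smooth $C^\infty$-linear combination of iterated Lie brackets of $X_1,\dots,X_\m$ of length $\le j$, which is the natural reading of condition (ii) compatible with orthonormality against an arbitrary Riemannian extension $g$. The only genuinely delicate point is the interplay between the Gram--Schmidt step (which generically breaks the bracket structure) and the privileged-frame condition (ii); with the smooth-combination interpretation of (ii) the argument goes through unimpeded.
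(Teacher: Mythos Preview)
Your argument is essentially the paper's: build the horizontal orthonormal frame first (the paper chooses $Y_i$ with $Y_i(p)=v_i$ and then applies Gram--Schmidt, which fixes the already orthonormal values at $p$, rather than your constant-rotation trick, but the effect is identical), then extend layer by layer via Gram--Schmidt on brackets of the appropriate length. Your observation about condition~(ii) is well taken---the paper's own proof implicitly adopts the same relaxed reading, since its Gram--Schmidt step likewise produces $C^\infty$-linear combinations of iterated brackets rather than literal iterated brackets.
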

\begin{proof}
	Since $\cD$ is locally spanned by smooth horizontal vector fields, we can first find linearly independent horizontal vector fields $Y_i$ on a neighborhood of $p$ such that $Y_i(p)=v_i$ for all $i=1,\ldots,\m$. Then we apply the Gram--Schmidt algorithm to these vector fields in order to obtain an orthonormal horizontal frame 
	$X_1,\ldots,X_\m$ such that $X_i(p)=v_i$ with $i=1,\ldots,\m$.
	Then we choose a frame $(Y_{\m+1},\ldots,Y_{\n_2})$ such that
	$(X_1(q),\ldots,X_\m(q),Y_{\m+1}(q),\ldots,Y_{\n_2}(q))$ is a basis of $\cD^2_q$
	for every $q$ in a neighborhood of $p$. Again the Gram--Schmidt algorithm 
	provides an orthonormal frame $X_{\m+1},\ldots,X_{\n_2}$ such that
	$(X_1(q),\ldots,X_\m(q),X_{\m+1}(q),\ldots,X_{\n_2}(q))$ is an
	orthonormal basis of $\cD^2_q$ for every $q$ in a neighborhood of $p$.
	Repeating this argument we find a neighborhood of $p$ and 
	a privileged frame $(X_1,\ldots,X_\n)$ on $U$ that satisfy our claim.
\end{proof}

\subsection{Exponential coordinates and local sub-Riemannian distance}\label{subsec:expSR}
We begin the section by introducing the notion of exponential coordinates with respect to a privileged frame of vector fields.

\begin{definition}[Exponential map]
	Given a smooth vector field $X$ on $M$ and a compact set $K$, there exists $\delta>0$ such that the unique solution of the Cauchy problem
	\[
	\begin{cases}
	\dot \gamma_p=X\circ \gamma_p\\
	\gamma_p(0)=p,
	\end{cases}
	\]
	is well defined on $[-\delta,\delta]$. The exponential map of $X$ is then defined by
	\[
	\exp(tX)(p)=\gamma_p(t)
	\]
	for every $t\in [-\delta,\delta]$ and $p\in K$.
\end{definition}

\begin{definition}[Exponential coordinates of the first kind]\label{def:unifexpcoord}
We fix a regular point $p\in M$ of a sub-Riemannian manifold $(M,\cD,g)$ and consider a privileged frame $\bX=(X_1,\dots, X_\n)$ with respect to $\cD$  on a neighborhood $W$ of $p$. For each $q\in W$ we may find a smooth diffeomorphism 
$F_{q,\bX}\colon V_q\to F_{q,\bX}(V_q)\subset W$ defined as
\beq\label{eqdef:exponentialcoordinates1}
F_{q,\bX}(x)=\exp(x_1X_1+\dots+x_\n X_\n)(q),
\eeq
for some open set $V_q\subset \R^\n$ containing the origin. 
We say that
$(x_1,\ldots,x_\n)$ of \eqref{eqdef:exponentialcoordinates1},
corresponding to $F_{q,\bX}^{-1}$, are 
{\em exponential coordinates of the first kind centered at $q$.}
We may also choose an open neighborhood $V\subset \R^\n$ of $0$ 
and an open neighborhood $U\subset W$ of $p$ such that
the smooth map $F_\bX\colon U\times V\rightarrow W$, 
\beq\label{eqdef:exponentialcoordinates}
F_\bX(q,x)=\exp(x_1X_1+\dots+x_\n X_\n)(q)
\eeq
is well defined on $U\times V$
and, by standard ODE arguments, $F_\bX(q,\cdot)\colon V\to F_\bX(\set{q}\times V)$
is a $C^\infty$ diffeomorphism for every $q\in U$. 
In this case, we say that $F_\bX$ represents a system of \emph{uniform exponential coordinates of the first kind relative to the frame $\bX=(X_1,\ldots,X_\n)$}.
\end{definition}

The exponential coordinates of the first kind can be naturally associated with a family of dilations as follows. 
We assign the weight $w_i=j$ to a coordinate $x_i$ if $\n_{j-1}<i\leq \n_j$, where $\n_i$ is introduced in Definition~\ref{def:privcoord}.
Then for every $r>0$ we define the {\em anisotropic dilation} $\delta_r\colon \R^\n\to \R^\n$ by setting
\begin{equation}\label{df:tangdilations}
	\delta_r x=\sum_{i=1}^\n r^{w_i}x_ie_i,
\end{equation}
where $(e_1,\ldots,e_\n)$ denotes the canonical basis of $\R^\n$.
We say that a function $\phi\colon\R^\n\to\R$ is {\em $\delta$-homogeneous of degree $\alpha>0$} 
if $\phi(\delta_rx)=r^{\alpha} \phi(x)$ for every $x\in\R^\n$ and $r>0$.

\subsubsection{Sub-Riemannian manifold in local coordinates}\label{subsect:inducedSR}

We fix a regular point $q\in M$ of a sub-Riemannian manifold $(M,\cD,g)$ and choose some exponential coordinates $(x_1,\ldots,x_\n)$ of an open and connected neighborhood $A$ of $0$.
Such coordinates are given by the diffeomorphism
$$F_{q,\bX}\colon A\to F_{q,\bX}(A),$$
according to \eqref{eqdef:exponentialcoordinates1}, where clearly $F_{q,\bX}(0)=q$
and $F_{q,\bX}(A)\subset M$ is an open neighborhood of $q\in M$.
We may define the vector fields
\beq\label{eq:localq}
\widetilde X_j^q=(F_{q,\bX})_*^{-1}(X_j)
\eeq
for each $j=1,\ldots,\n$ on $A\subset\R^\n$. The ordered family of vector fields
$(\widetilde X_1^q,\dots, \widetilde X_\n^q)$ is a privileged frame
with respect to the horizontal vector fields 
$$
\widetilde {\mathcal D}_{q,\bX}=(F_{q,\bX})_*^{-1}\cD
$$ 
on $A$. Considering the preimage of the sub-Riemannian metric $g$ on $M$, we have obtained an {\em induced sub-Riemannian manifold} on $A$ defined by the triple 
$$
(A,\widetilde{\mathcal D}_{p,\bX}, F_{q,\bX}^* g).
$$ 
Since $A\subset \R^\n$ is connected, we have obtained a {\em local sub-Riemannian distance} $\widetilde d_q$ {\em on} $A$ with respect to $\widetilde {\mathcal D}_{q,\bX}$ and $F_{q,\bX}^* g$, that is obtained by {\em considering only horizontal curves contained in $A$}.

\begin{definition}[Local distance and local sub-Riemannian ball]\label{def:balls}
We fix a regular point $q$ of a SR manifold $(M,\cD,g)$ and an associated  privileged frame $\bX=(X_1,\ldots,X_\n)$ defined on a neighborhood $W$ of $q$. Let $F_{q,\bX}\colon A\to F_{q,\bX}(A)$ define a system of exponential coordinates of the first kind centered at $q$, where $A\subset\R^\n$ is an open and connected neighborhood of $0\in\R^\n$. 
Let $\widetilde d_q$ be the {\em local sub-Riemannian distance} defined by the frame
$(\widetilde X^q_1,\ldots,\widetilde X^q_\n)$ of \eqref{eq:localq} on the subset $A\subset\R^\n$ (Definition~\ref{def:SRdX_j}).
Then we have the associated {\em local sub-Riemannian balls}
\beq
	\widetilde B_q(x,r)=\set{y\in A:\widetilde d_q(x,y)<r},
\eeq
where $x\in A$ is the center and $r>0$ is the radius.
The corresponding closed ball is 
\beq \label{eq:closedballs}
	\widetilde \B_q(x,r)=\{y\in  A: \widetilde d_q(x,y)\leq r \}.
\eeq
\end{definition}

\begin{rmk}
We stress that the local sub-Riemannian balls depend on the fixed domain $A$ of the exponential coordinates and and also refers to a distance made by curves contained in $A$.
\end{rmk}

Since $\widetilde d_q$ strongly depends on $A$, we may well ask whether it can be related to the sub-Riemannian distance $d$ on $M$. 
More precisely, we may wonder which conditions ensure that the local distance $\widetilde d_q$ on $A$ makes $F_{q,\bX}\colon A\to F_{q,\bX}(A)$ an isometry,
where $F_{q,\bX}(A)\subset M$, where we consider the sub-Riemannian distance $d$ on $M$.

In the next proposition, we show that for all $x,y\in A$ it holds
\begin{equation}\label{eq:isomdG}
	\widetilde{d}_q(x,y)=d(F_{q,\bX}(x),F_{q,\bX}(y)),
\end{equation}
if $A$ is considered ``suitably small''.

\begin{proposition}\label{prop:localdist}
Let $A\subset \R^\n$ be an open and connected neighborhood of $0\in\R^n$ and let
$F_{q,\bX}\colon A\to F_{q,\bX}(A)$ be a system of exponential coordinates of the first kind, centered at a regular point $q$ of a SR manifold $(M,\cD,g)$. 
We fix $r>0$ such that $B(q,4r)\subset F_{q,\bX}(A)$ and set $\widetilde A=F_{q,\bX}^{-1}(B(q,r))\subset A$.
Then the restriction $F_{q,\bX}\colon\widetilde A\to F_{q,\bX}(\widetilde A)$ is an isometry, namely  \eqref{eq:isomdG} holds for every 
$x,y\in\widetilde A$
and we have  $\widetilde A=\widetilde B_q(0,r)$.
\end{proposition}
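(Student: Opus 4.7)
My plan is to derive \eqref{eq:isomdG} by a double inequality and then obtain $\widetilde A=\widetilde B_q(0,r)$ as an immediate consequence. I would work with the sub-Riemannian distance in the formulation of Definition~\ref{def:SRdX_j} (via Remark~\ref{rem:equivSRX_j}), exploiting the fact that by construction $F_{q,\bX}$ carries the frame $(\widetilde X_j^q)_{j=1}^\m$ on $A$ onto the orthonormal horizontal frame $(X_j)_{j=1}^\m$ on $F_{q,\bX}(A)$, so horizontal curves and their controls $h$ correspond bijectively under $F_{q,\bX}$, with identical $\|h\|_\infty$.

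First, for every $x,y\in A$ the inequality $d(F_{q,\bX}(x),F_{q,\bX}(y))\le \widetilde d_q(x,y)$ is immediate: any horizontal competitor in $A$ for $\widetilde d_q(x,y)$ is pushed forward by $F_{q,\bX}$ to a horizontal curve in $M$ joining the image points, with the same control, so the infima respect the inclusion of admissible families. The core step is the reverse inequality for $x,y\in\widetilde A$. Given $0<\eps<r$, using Remark~\ref{rem:equivSRX_j} I choose a horizontal curve $\gamma\colon[0,T]\to M$ from $F_{q,\bX}(x)$ to $F_{q,\bX}(y)$ with $\|h\|_\infty\le 1$ and time length $T\le d(F_{q,\bX}(x),F_{q,\bX}(y))+\eps$. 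Since $x,y\in\widetilde A$ means $d(q,F_{q,\bX}(x))<r$ and $d(q,F_{q,\bX}(y))<r$, the triangle inequality gives $d(F_{q,\bX}(x),F_{q,\bX}(y))<2r$, hence $T<2r+\eps$. For every $t\in[0,T]$,
\[
d(q,\gamma(t))\le d(q,F_{q,\bX}(x))+t<r+2r+\eps=3r+\eps<4r,
\]
so $\gamma([0,T])\subset B(q,4r)\subset F_{q,\bX}(A)$. The pull-back $F_{q,\bX}^{-1}\circ\gamma$ is therefore a horizontal curve in $A$ from $x$ to $y$ with the same controls, giving $\widetilde d_q(x,y)\le d(F_{q,\bX}(x),F_{q,\bX}(y))+\eps$; letting $\eps\to 0$ completes the isometry.

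Since $F_{q,\bX}(0)=q$, membership in $\widetilde A$ is precisely the condition $d(q,F_{q,\bX}(y))<r$. If $y\in\widetilde A$, then $0,y\in\widetilde A$ and the isometry gives $\widetilde d_q(0,y)=d(q,F_{q,\bX}(y))<r$, so $y\in\widetilde B_q(0,r)$. Conversely, if $y\in\widetilde B_q(0,r)$, the easy inequality (valid on all of $A$) yields $d(q,F_{q,\bX}(y))\le\widetilde d_q(0,y)<r$, hence $y\in\widetilde A$.

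The main obstacle I expect is justifying that a near-minimizing curve in $M$ between two points of $B(q,r)$ cannot leak out of $F_{q,\bX}(A)$; this is exactly where the buffer factor $4$ in the hypothesis $B(q,4r)\subset F_{q,\bX}(A)$ is used, producing the strict inequality $3r+\eps<4r$ for $\eps<r$. Once confinement is secured the pull-back argument is routine, since $\widetilde X_j^q$ is by definition $F_{q,\bX}$-related to $X_j$ and no geometric information is lost in the transfer.
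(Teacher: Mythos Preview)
Your proof is correct and follows essentially the same approach as the paper's: both establish the easy inequality by pushing forward horizontal curves from $A$ to $M$, and the reverse inequality by choosing a near-optimal horizontal curve in $M$, using the triangle inequality together with the hypothesis $B(q,4r)\subset F_{q,\bX}(A)$ to confine it inside the chart, and then pulling back. The only cosmetic difference is that you work in the time-minimizing formulation of Remark~\ref{rem:equivSRX_j} while the paper uses the length functional directly, and you obtain $\widetilde B_q(0,r)\subset\widetilde A$ via the easy global inequality rather than by pushing forward a specific curve; these are equivalent.
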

\begin{proof}
We choose $q_1,q_2\in B(q,r)$ and $\varepsilon\in(0,r)$. There exists a horizontal curve $\gamma\colon [0,1]\to M$ connecting $q_1$ to $q_2$ such that 
	\[
	{\rm length}(\gamma)=\int_0^1 \sqrt{g_{\gamma(t)}(\dot\gamma(t),\dot\gamma(t))}dt< d(q_1,q_2)+\varepsilon.
	\]
By the triangle inequality, one can easily check that
	\[
	d(\gamma(t),q)\leq d(\gamma(t),q_1)+d(q_1,q)< {\rm length}(\gamma)+r< d(q_1,q_2)+\varepsilon+r<4r,
	\]
hence $\gamma(t)\in B(q,4r)\subset $ for all $t\in[0,1]$.
As a consequence, by definition 
of sub-Riemannian manifold $(A, \widetilde{\mathcal D }, F_{q,\bX}^* g)$, the curve $\widetilde \gamma=F_{q,\bX}^{-1}\circ \gamma$ is horizontal 
and contained in $\widetilde A$, connects
the preimages $x=F_{q,\bX}^{-1}(q_1)\in\widetilde A$ and $y=F_{q,\bX}^{-1}(q_2)\in\widetilde A$
and has the same length of $\gamma$. It follows that
\[
\widetilde d_q(x,y)\le {\rm length}(\widetilde \gamma)={\rm length}(\gamma)<d(q_1,q_2)+\ep
\]
and the arbitrary choice of $\ep>0$ yields
\[
\widetilde d_q(x,y)\le d(q_1,q_2)
=d(F_{q,\bX}(x),F_{q,\bX}(y)).
\]
If we consider any horizonal curve $\tilde\gamma\colon[0,1]\to \widetilde A$ connecting $F_{q,\bX}^{-1}(q_1)$ and $F_{q,\bX}^{-1}(q_2)$, then $F_{q,\bX}\circ\widetilde \gamma$ is a horizontal curve of $M$ that connects $q_1$ and $q_2$ and has the same length. As a consequence, the opposite inequality immediately follows,
hence \eqref{eq:isomdG} holds. 
If we take $w\in B(q,r)$, we have $\widetilde d_q(F_{q,\bX}^{-1}(w),0)=d(w,q)<r$, therefore $\widetilde A=F_{q,\bX}^{-1}(B(q,r))\subset \widetilde B_q(0,r) $.
If $x\in \widetilde B_q(0,r)\subset A$, we choose a horizontal curve 
$\widetilde \gamma\colon[0,1]\to A$ connecting $x$ and $0$ such that 
\[
{\rm length}(\widetilde \gamma)<r.
\]
Then $\gamma=F_{q,\bX}\circ \widetilde \gamma$ is a horizontal curve connecting $F_{q,\bX}(x)$ and $q$ with
\[
d(F_{q,\bX}(x),q)\le {\rm length}(\gamma)={\rm length}(\widetilde \gamma)<r.
\]
The opposite inclusion holds and the proof is complete.
\end{proof}

\subsection{Nilpotent approximation}\label{sect:nilpapp}

In geometric terms, the nilpotent approximation corresponds to a metric tangent cone, that can be obtained for equiregular sub-Riemannian manifolds using the well known Gromov-Hausdorff convergence of metric spaces, \cite{Mitchell85}, see also
\cite[Theorem~7.36]{Bell96}. 
There is a huge literature on this topic, which goes back to the works of Rothschild and Stein \cite{RothschildStein76}, Goodman \cite{Goodman1976} and Metivier \cite{Met1976}. We mention for instance the papers \cite{AGM15BVSubRiem, AntLeDNicGol2022LipCCdist, Bell96, Bell97, Jean2001, MarMos2000, MPV2018}, along with the monographs \cite{ABB20} and \cite{Jean2014}.
A systematic study of privileged coordinates and the nilpotent approximations has been developed in the recent papers \cite{ChoPon2019I, ChoPon2019II}, where more references can be found. 
For our approach, it is convenient to consider the nilpotent approximation with respect to exponential coordinates of the first kind. We refer to \cite[Section~2]{MPV2018}, especially Theorem~2.3, Proposition~2.5 and Remark~2.6, that are summerized in the next result.

\begin{theorem}\label{th:tangentspace}
Let $(M, \mathcal D, g)$ be an equiregular sub-Riemannian manifold, let $p\in M$ and let $(X_1,\dots, X_\n)$ be a privileged frame of smooth vector fields on an open set $W$ containing $p$. Let $F_{p,\bX}\colon V_p\to F_{p,\bX}(V_p)\subset W$ be as in \eqref{eqdef:exponentialcoordinates1}. We define the vector fields $\widetilde X_i^p= (F_{p,\bX})^{-1}_*X_i$ and the smooth functions 
$a_{ij}^p\in C^\infty(V_p)$, such that for every $i=1,\dots,\n$ and for every $x\in V_p$, we have
		\[
		\widetilde X_i^p(x)=\sum_{j=1}^\n a_{ij}^p(x)\partial_j.
		\]
		Then, for any $i,j=1,\dots,\n$, there exist a unique polynomial $b^p_{ij}$ and a smooth function $\rho^p_{ij}\in C^\infty(V_p)$ such that 
		$a^p_{ij}=b^p_{ij}+\rho^p_{ij}$ and the following conditions hold.
		\begin{itemize}
			\item[(a)] If $w_j\geq w_i$, we have that
			\begin{enumerate} 
				\item
				$b^p_{ij}$ is $\delta$-homogeneous of degree $w_j-w_i$,
				\item
				 $\lim_{x\to 0} \|x\|^{w_i-w_j}\rho^p_{ij}(x)=0$, in particular $\rho^p_{ij}(0)=0$.
			\end{enumerate}
			\item[(b)] If $w_j=w_i$, then $b^p_{ij}=\delta_{ij}$ (where $\delta_{ij}$ denotes
			the usual Kronecker delta)
		\item[(c)] If $w_j<w_i$, then $b^p_{ij}=0$.
		\end{itemize}
Moreover, if we define for $i=1,\dots, \n$ and $r>0$ the vector fields
\begin{equation}\label{eq:Xcappuccio}
		\widehat X_i^p(x)=\sum_{j=1}^\n b^p_{ij}(x)\partial_j\quad\text{and}\quad
\widetilde X_i^{p,r}=r^{w_i}\,(\delta_{1/r})_*(\widetilde X^p_i),
\end{equation}
then $\widetilde X_i^{p,r}$ converges to $\widehat X^p_i$ as $r\to0$ in the $C_{\rm loc}^\infty$-topology.
In addition, the frame $(\widehat X_1^p,\dots, \widehat X_\m^p)$ defines a stratified group structure on $\R^\n$.
\end{theorem}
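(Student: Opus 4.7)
The plan is to pull back the frame through the exponential chart $F_{p,\bX}$, extract the leading weighted-homogeneous part of each coefficient $a_{ij}^p$ by $\delta$-rescaling, and then read off the stratified group structure from that leading part. Because $F_{p,\bX}(t e_k)=\exp(t X_k)(p)$, the standard derivative identity gives $(F_{p,\bX})_*(\partial_k)|_0=X_k(p)$, so $\widetilde X_i^p(0)=\partial_i$ and $a_{ij}^p(0)=\delta_{ij}$. This baseline is what forces (b) once the weighted-degree-zero piece is known to be constant.

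The core step is the weighted expansion. Assign weight $w_k$ to $x_k$ and $-w_k$ to $\partial_k$, and pick any homogeneous norm $\|\cdot\|$ with $\|\delta_r x\|=r\|x\|$. I would establish that, in exponential coordinates of the first kind adapted to a \emph{privileged} frame, the pullback $\widetilde X_i^p$ acts on germs at $0$ as a differential operator of weighted order $-w_i$ whose principal part is a polynomial vector field $\delta$-homogeneous of degree $-w_i$. To see this, expand $\exp(x_1 X_1+\cdots+x_\n X_\n)$ along the family $s\mapsto F_{p,\bX}(sx)$ and iterate via the Campbell--Baker--Hausdorff formula; the expansion is a series in iterated brackets $[X_{k_1},[X_{k_2},\dots]]$ whose bracket length matches the weight, and the privileged property---that $X_{\n_{j-1}+1},\ldots,X_{\n_j}$ sit in layer $j$ of the flag and in no lower layer---is exactly what prevents any monomial of weighted degree $<w_j-w_i$ from appearing in the $\partial_j$-component. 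Applying this operator to $x_j$ yields $a_{ij}^p(x)=\widetilde X_i^p(x_j)$ with weighted order at $0$ at least $w_j-w_i$; keeping the exactly weighted-homogeneous polynomial part of degree $w_j-w_i$ as $b_{ij}^p$ and the strictly higher-order smooth remainder as $\rho_{ij}^p$ gives (a). Condition (a)(2) is then the definition of the weighted order; (b) follows from $a_{ij}^p(0)=\delta_{ij}$ and the fact that weighted-degree-zero polynomials are constants; (c) follows from the vanishing of polynomials of negative weighted degree.

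The convergence is a direct computation. Since $d\delta_r$ is diagonal with entries $r^{w_k}$,
\[
\widetilde X_i^{p,r}(y)\;=\;r^{w_i}(\delta_{1/r})_*\widetilde X_i^p(y)\;=\;\sum_{j=1}^\n r^{w_i-w_j}\,a_{ij}^p(\delta_r y)\,\partial_j.
\]
Homogeneity of $b_{ij}^p$ gives $r^{w_i-w_j}b_{ij}^p(\delta_r y)=b_{ij}^p(y)$, and the remainder condition combined with $\|\delta_r y\|=r\|y\|$ yields $r^{w_i-w_j}\rho_{ij}^p(\delta_r y)\to 0$ locally uniformly as $r\to 0$; the analogous bookkeeping on derivatives upgrades this to $C^\infty_{\rm loc}$ convergence, so $\widetilde X_i^{p,r}\to\widehat X_i^p$. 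Finally, the $\widehat X_i^p$ are $\delta$-homogeneous polynomial vector fields of weighted degree $-w_i$, and their iterated Lie brackets close up in a finite-dimensional nilpotent Lie algebra filtered by weight; Chow's condition transfers from $M$ to the limit because dimensions are preserved by equiregularity and brackets converge under the rescaling, so the simply connected Lie group generated by $(\widehat X_1^p,\ldots,\widehat X_\m^p)$ is stratified and identified with $\R^\n$ by its exponential map.

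\textbf{Main obstacle.} The real work is the middle step: verifying the weighted-order decomposition in exponential coordinates of the \emph{first} kind. In second-kind coordinates the statement is essentially immediate, but the joint exponential $\exp(\sum_k x_k X_k)$ forces one to unwind Campbell--Baker--Hausdorff and control exactly which weighted monomials survive---this is the bookkeeping carried out in Theorem~2.3 of \cite{MPV2018}.
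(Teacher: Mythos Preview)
The paper does not actually prove this theorem: it is stated as a summary of Theorem~2.3, Proposition~2.5 and Remark~2.6 of \cite{MPV2018}, and no argument is given beyond the citation. Your sketch is therefore not in competition with a proof in the paper but with the cited reference, which you yourself invoke at the end for the hard step. The outline you give---pull back through $F_{p,\bX}$, observe $a_{ij}^p(0)=\delta_{ij}$, isolate the weighted-homogeneous part of degree $w_j-w_i$ via the $\delta_r$-rescaling, and read off convergence and the nilpotent structure---is the standard route and is consistent with what \cite{MPV2018} does; your identification of the Campbell--Baker--Hausdorff bookkeeping in first-kind coordinates as the genuine technical content is accurate.
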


\begin{rmk}\label{rmk:nilpotentapp}
	According to Theorem~\ref{th:tangentspace}, the frame 
	$\widehat\bX^p=(\widehat X^p_1,\ldots,\widehat X^p_\n)$ 
	representing the nilpotent approximation has the following property: the polynomials $b_{ij}^q$ are obtained 
	by taking the homogeneous part of degree $w_j-w_i$ in the Taylor's expansion of $a_{ij}^q$.
	As a consequence, the coefficients of these polynomials are expressed in terms of the iterated partial derivatives of 
	$a^p_{ij}$ at the origin. This observation will be important in the proof of Theorem~\ref{thm:unifblowup}.
\end{rmk}

\begin{definition}[Nilpotent approximation and tangent sub-Riemannian distance]\label{def:nilpApprox} \rm
Let $(M,\cD,g)$ be an equiregular sub-Riemannian manifold and let $\bX=(X_1,\dots, X_\n)$ be a privileged frame of smooth vector fields on an open set $W\subset M$. If $p\in W$, the unique frame $\widehat\bX^p=(\widehat X^p_1,\ldots,\widehat X^p_\n)$ in \eqref{eq:Xcappuccio}, provided by Theorem~\ref{th:tangentspace} represents the so-called {\em nilpotent approximation of the frame $(X_1, \dots,X_\n)$ around $p$}.
We denote by $\widehat g_p$ the sub-Riemannian metric of $\R^\n$ that makes $\widehat \bX_h=(\widehat X^p_1,\ldots,\widehat X^p_\m)$ a horizontal and orthonormal frame generating a stratified Lie algebra. 
The previous frame generates a sub-Riemannian distance $\widehat d_p$ defined on all couples of points of $\R^\n$, hence Theorem~\ref{th:tangentspace} implies that this distance is left invariant and 1-homogeneous with respect to the dilations \eqref{df:tangdilations}.
Taking into account that the nilpotent approximation can be seen as Gromov-Hausdorff limit of the rescaled sub-Riemannian manifold, \cite{Mitchell85},
it is natural to call $\widehat d_p$ the {\em tangent sub-Riemannian distance} of $M$ at $p$.
\end{definition}
\begin{definition}[Tangent sub-Riemannian balls]\label{def:NAballs}
We consider the exponential coordinates of the first kind $F_{p,\bX}\colon V_p\to F_{p,\bX}(V_p)$ associated
with a regular point $p\in M$ of a sub-Riemannian manifold $(M,\cD,g)$.
We have denoted by $\bX=(X_1,\ldots,X_\n)$ a privileged frame on a neighborhood of $p$
and $V_p\subset\R^\n$ is an open neighborhood of $0\in\R^\n$. 
The {\em tangent sub-Riemannian ball} is the open metric ball of center $x\in\R^\n$ and radius $r>0$ with respect to 
the tangent sub-Riemannian $\widehat d_p$ is
\[
\widehat B_p(x,r)=\set{y\in \R^n:\widehat d_p(x,y)<r}.
\]
The corresponding closed ball is $\widehat \B_p(x,r)=\{y\in\R^\n: \widehat d_p(x,y)\leq r \}$.
If we wish to emphasize the frame that generate the nilpotent approximation, we may also write $\widehat B_{p,\bX}(x,r)$ and $\widehat \B_{p,\bX}(x,r)$ 
in place of $\widehat B_p(x,r)$ and $\widehat \B_p(x,r)$, respectively.
\end{definition}

\begin{rmk}
The use of the entire frame $\bX$ in the notation for the metric balls $\widehat B_{p,\bX}(x,r)$ and $\widehat \B_{p,\bX}(x,r)$ is justified by the fact that before getting the horizontal frame $\widehat \bX_h$ of the nilpotent approximation, we have to first fix a system of exponential coordinates arising from $\bX$. 
\end{rmk}

\begin{notation}
The open and closed Euclidean balls of center $x$ and radius $r>0$ in $\R^n$ are denoted by $B_\rE(x,r)$ and $\B_\rE(x,r)$, respectively.
\end{notation}

The following resut is a well known metric version of the nilpotent approximation, that can be found for instance in \cite[Lemma~20.20]{ABB20}, \cite[Theorem~3.5]{AGM15BVSubRiem}, \cite[Theorem~2.9]{DonVittone2019FineProp} or \cite[Theorem~2.2]{Jean2014}.

\begin{theorem}[Pointed blow-up] \label{th:localgroup}
	Let us consider an equiregular sub-Riemannian manifold $(M, \mathcal D, g)$ and $p\in M$. Let $\widehat \bX^p_h=(\widehat X_1^p,\dots,\widehat X_\m^p)$ be the horizontal frame defined by \eqref{eq:Xcappuccio}. Then for every $R>0$ we have
	\begin{equation}\label{eq:localgroup}
	\lim_{r\to 0} \left(\sup\left\{\frac{|\widetilde d_p(x,y)-\widehat d_p(x,y)|}{r}: x,y\in \widehat B_p(0,rR)\right\}\right)=0,
	\end{equation}
where $\widetilde{d}_p$ is the local distance  (Section~\ref{subsect:inducedSR})
and $\widehat{d}_p$ is the tangent sub-Riemannian distance (Definition~\ref{def:nilpApprox}).
\end{theorem}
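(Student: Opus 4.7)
The plan is to convert \eqref{eq:localgroup} into a uniform convergence statement for a family of rescaled sub-Riemannian distances on a \emph{fixed} compact subset of $\R^\n$, and then invoke the $C^\infty_{\rm loc}$ convergence of the rescaled frames supplied by Theorem~\ref{th:tangentspace}.

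First I rescale via the anisotropic dilations. Writing $x=\delta_r x'$ and $y=\delta_r y'$, the $1$-homogeneity of $\widehat d_p$ yields $\widehat d_p(x,y)=r\,\widehat d_p(x',y')$, and the constraint $x,y\in\widehat B_p(0,rR)$ becomes exactly $x',y'\in\widehat B_p(0,R)$. For the local distance, the identity $\widetilde X_j^{p,r}=r\,(\delta_{1/r})_\ast \widetilde X_j^p$ valid for horizontal indices $j=1,\dots,\m$ lets me transport any admissible horizontal curve through $\delta_{1/r}$: for $r$ small enough so that $\delta_{1/r}(V_p)$ contains a fixed neighborhood of the closure of $\widehat B_p(0,R)$, an easy reparametrization argument gives
\[
\widetilde d_p(\delta_r x',\delta_r y')=r\,\widetilde d_p^r(x',y'),
\]
where $\widetilde d_p^r$ denotes the local distance induced by the rescaled horizontal frame $(\widetilde X_1^{p,r},\dots,\widetilde X_\m^{p,r})$ on $\delta_{1/r}(V_p)$. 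Dividing by $r$, the statement \eqref{eq:localgroup} is equivalent to
\[
\lim_{r\to 0}\,\sup\bigl\{\,|\widetilde d_p^r(x',y')-\widehat d_p(x',y')|: x',y'\in\widehat B_p(0,R)\,\bigr\}=0.
\]

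By Theorem~\ref{th:tangentspace}, $\widetilde X_j^{p,r}\to \widehat X_j^p$ in $C^\infty_{\rm loc}(\R^\n)$ as $r\to 0$, and the iterated Lie brackets converge accordingly. Since $\widehat \bX^p_h$ satisfies Chow's condition of step $s$ at every point of $\R^\n$, for $r$ small enough the rescaled horizontal frame satisfies Chow's condition of step $s$ on any fixed compact subset with uniform quantitative bracket estimates. The problem is therefore reduced to the following general principle: if smooth horizontal frames $\bX^r$ of constant rank converge in $C^\infty_{\rm loc}$ to a bracket-generating frame $\bX^0$ and Chow's condition holds uniformly with a fixed step, then the induced sub-Riemannian distances converge uniformly on every compact set.

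I would prove this principle in two steps. For the upper bound $\limsup_{r\to 0}\widetilde d_p^r(x',y')\le \widehat d_p(x',y')$, one takes a near-minimizing horizontal curve for $\widehat d_p$ joining $x'$ to $y'$, re-runs its controls against the perturbed frame $(\widetilde X_1^{p,r},\dots,\widetilde X_\m^{p,r})$ (the resulting trajectory is close to the original by ODE stability), and corrects the small residual endpoint error through a uniform-in-base\-point Chow-type construction built from iterated flows, whose cost is controlled by the iterated brackets and therefore vanishes as $r\to 0$ uniformly in $(x',y')$. For the lower bound, any near-minimizing sequence for $\widetilde d_p^r$ has $L^\infty$-controls uniformly bounded by an approximation of $\widehat d_p(x',y')$; the curves are thus uniformly Lipschitz and Arzelà-Ascoli together with the $C^0$ convergence of the frames and weak-$\ast$ compactness of the controls produces a limit horizontal curve for $\widehat\bX^p_h$ connecting $x'$ and $y'$, yielding $\widehat d_p(x',y')\le\liminf_{r\to 0}\widetilde d_p^r(x',y')$. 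The main obstacle is the quantitative, uniform-in-the-basepoint endpoint correction in the upper bound; equiregularity at $p$ together with the $C^\infty_{\rm loc}$ convergence of the frames \emph{and all their iterated brackets}, both encoded in Theorem~\ref{th:tangentspace}, are precisely what allow the correction to be made uniformly on a fixed compact set.
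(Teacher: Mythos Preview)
The paper does not supply its own proof of this theorem: it is stated as a well-known result and referenced to \cite[Lemma~20.20]{ABB20}, \cite[Theorem~3.5]{AGM15BVSubRiem}, \cite[Theorem~2.9]{DonVittone2019FineProp} and \cite[Theorem~2.2]{Jean2014}. So there is no proof in the paper to compare against directly.

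That said, your argument is correct and is essentially the standard one. Your rescaling step is exactly right (for horizontal indices $w_j=1$, so $\widetilde X_j^{p,r}=r\,(\delta_{1/r})_*\widetilde X_j^p$, and the curve-transport identity $\widetilde d_p(\delta_r x',\delta_r y')=r\,\widetilde d_p^r(x',y')$ follows). Your ``general principle'' is precisely what the paper later formalizes and proves in full as Theorem~\ref{th:DVGen} (there with an additional compact parameter $q$), and your two-step scheme matches the structure of that proof: your lower bound via weak-$*$ compactness of controls and Arzel\`a--Ascoli is exactly the paper's Step~5, and your upper bound via running the limit controls on the perturbed frame plus an endpoint correction is the paper's Step~4. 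The only methodological difference is in how the endpoint correction is carried out: you propose a uniform Chow-type construction from iterated flows, while the paper instead invokes the Nagel--Stein--Wainger/Bramanti--Brandolini--Pedroni estimate $d\le C|x-y|^{1/s}$ (inequality \eqref{eq:C_Kdjq}) to bound the correction cost by $C_\kappa\,|y_q^j-y|^{1/s}$, which vanishes uniformly. The NSW route is cleaner because the constant $C$ depends only on quantities that are manifestly stable under $C^\infty_{\rm loc}$ convergence of the frame, whereas making the Chow construction uniform in the basepoint requires essentially the same quantitative input but with more bookkeeping. Either route works.
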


From standard arguments, Theorem~\ref{th:localgroup} yields the following well known result. 

\begin{theorem}\label{thm:BallEquiv}
Let $(M, \mathcal D, g)$ be an equiregular sub-Riemannian manifold and let $p\in M$. 
Then for every $\varepsilon >0$ there exist $R>0$ such that, for every $r\in(0,R)$, we have
\begin{equation}\label{incl:Btildeqreps}
	\widetilde B_p(0,(1-\varepsilon)r)\subseteq\widehat B_p(0,r)\subseteq \widetilde B_p(0,(1+\varepsilon)r),
\end{equation}
where $\widetilde B_q(x,r)$ denotes local sub-Riemannian ball (Definition~\ref{def:balls}) 
and $\widehat B_q(x,r)$ is the tangent sub-Riemannian ball (Definition \ref{def:NAballs}).
\end{theorem}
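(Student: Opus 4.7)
The plan is to deduce both inclusions from the uniform closeness estimate provided by Theorem~\ref{th:localgroup}. Fix $\varepsilon\in(0,1)$ and apply Theorem~\ref{th:localgroup} with, say, the scale factor $R_0=2$. This yields some $R>0$ such that, for every $r\in(0,R)$,
\[
|\widetilde d_p(x,y)-\widehat d_p(x,y)|<\varepsilon r\qquad\forall\, x,y\in \widehat B_p(0,2r).
\]
In all that follows I take $x=0$, which of course lies in every $\widehat B_p(0,2r)$.

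The inclusion $\widehat B_p(0,r)\subseteq\widetilde B_p(0,(1+\varepsilon)r)$ is immediate: any $y\in \widehat B_p(0,r)\subset\widehat B_p(0,2r)$ satisfies $\widehat d_p(0,y)<r$, so the closeness estimate applied to the pair $(0,y)$ gives $\widetilde d_p(0,y)<\widehat d_p(0,y)+\varepsilon r<(1+\varepsilon)r$.

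For the reverse inclusion $\widetilde B_p(0,(1-\varepsilon)r)\subseteq \widehat B_p(0,r)$, the subtlety is that a point $y$ with small $\widetilde d_p$-distance from $0$ need not a priori sit inside the tangent ball $\widehat B_p(0,2r)$ where the estimate is valid, so we cannot directly compare $\widetilde d_p(0,y)$ with $\widehat d_p(0,y)$. I would argue by contradiction: suppose $\widetilde d_p(0,y)<(1-\varepsilon)r$ but $\widehat d_p(0,y)\geq r$. By Definition~\ref{def:balls}, pick a horizontal curve $\gamma\colon[0,1]\to A$ with $\gamma(0)=0$, $\gamma(1)=y$ and length strictly less than $(1-\varepsilon)r$. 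Since $\widehat d_p$ induces the Euclidean topology on $\R^\n$ and $\gamma$ is absolutely continuous, the function $s\mapsto \widehat d_p(0,\gamma(s))$ is continuous, vanishes at $s=0$ and is at least $r$ at $s=1$. The intermediate value theorem produces some $s^*\in(0,1]$ with $\widehat d_p(0,\gamma(s^*))=r$, in particular $\gamma(s^*)\in\widehat B_p(0,2r)$. Applying the closeness estimate at the pair $(0,\gamma(s^*))$ yields
\[
\widetilde d_p(0,\gamma(s^*))>r-\varepsilon r=(1-\varepsilon)r.
\]
On the other hand, $\widetilde d_p(0,\gamma(s^*))$ is bounded by the length of $\gamma|_{[0,s^*]}$, itself bounded by the length of $\gamma$, which is less than $(1-\varepsilon)r$. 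This contradiction establishes the first inclusion.

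The only nontrivial point is the intermediate value step: we have to know that the ``first exit time'' $s^*$ lands us exactly on the sphere of radius $r$ (so inside $\widehat B_p(0,2r)$), which is why I used $R_0=2$ rather than $R_0=1$ in the application of Theorem~\ref{th:localgroup}. Everything else is a routine chain of one-line inequalities on distances and curve lengths.
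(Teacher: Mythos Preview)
Your argument is correct and is precisely the kind of ``standard argument'' the paper invokes: the paper does not spell out a proof but simply states that Theorem~\ref{th:localgroup} yields the result by standard reasoning, and your derivation---direct comparison for the easy inclusion, and the intermediate-value/first-exit-time trick for the subtle one---is a clean realization of that. Your choice of $R_0=2$ to ensure the exit point lies in the region where the estimate applies is exactly the right observation.
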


\begin{rmk}\label{rmk:expidentity}
	If $q\in M$ and $\bX=(X_1,\dots,X_\n)$ is a privileged frame, we consider the nilpotent approximation $\widehat \bX^q=(\widehat X^q_1,\dots,\widehat X^q_\n)$ and the associated exponential map
	\[
	F_{0,\widehat \bX^q}(x_1,\dots, x_\n)=\exp(x_1\widehat X_1^q+\dots+x_\n \widehat X^q_\n)(0).
	\]
	Using basic properties of flows and the convergence of the rescaled vector fields $\widetilde X^{p,r}_i$ in Theorem~\ref{th:tangentspace} one can check that 
	$F_{0,\widehat \bX^q}\colon \R^\n\to \R^\n$ is the identity mapping, see the arguments of \cite[Remark~2.6]{MPV2018}.
\end{rmk}

\subsection{Sets of finite perimeter in sub-Riemannian manifolds}\label{sect:perimeter}

In this section a sub-Riemannian measure manifold $(M,\cD,g,\omega)$ is fixed.
We introduce the family of {\em horizontal subunit vector fields} 
\[
\cD^g=\left\{X\in\cD: g_x(X(x),X(x))\leq 1\,\,\text{for all}\, x\in M\right\}.
\]
The notion of perimeter measure with respect to a volume form $\omega$ requires that also the divergence refers to the same volume form.
We have the general definition 
\beq \label{eq:divomega}
L_X(\omega)=(\div_\omega X)\,\omega, 
\eeq
where $X$ is a vector field and $L_X$ is its associated Lie derivative.

\begin{definition}
	[Sets of finite perimeter]\label{def:SRPer}
	We say that a Borel set $E\subset M$ has {\it finite perimeter} if
	\begin{equation}
		\sup\left\{\int_E \,\div_\omega(\varphi X)\,\omega: X\in\cD^g,\,\varphi\in C^\infty_c(M),\,|\varphi|\leq 1\right\}<\infty.
	\end{equation}
	The supremum is denoted by $\|D_{\omega,g}\bu_E\|(M)$. It is
	the {\em sub-Riemannian perimeter} of $E$.
\end{definition}
It can be proved that the set function 
\[
\|D_{\omega,g}\bu_E\|(U)=\sup\left\{\int_U\,\bu_E\,\div_\omega(\varphi X)\,\omega: X\in\cD^g,\,\varphi\in C^\infty_c(U),\,|\varphi|\leq 1\right\}
\]
defined on all open sets $U$ can be extended to a Radon measure on $M$,
see \cite{AGM15BVSubRiem} for more information.

\section{Spherical factor in equiregular sub-Riemannian manifolds}

The present section introduces the spherical factor in SR manifolds, that is the key geometric function to compute the spherical measure of a hypersurface. 

We start with the definition of spherical factor using exponential coordinates of the first kind arising from a privileged orthonormal frame.

\begin{definition}[Spherical factor]\label{def:beta}
Let $(M,\mathcal D,g)$ be a sub-Riemannian manifold and let $p\in M$ be a regular point. We denote by the same symbol $g$ a Riemannian metric that extends the sub-Riemannian metric.
	Let $(X_1,\dots,X_\n)$ be a privileged orthonormal frame in a neighborhood of $p$.
	Let $\nu\in\cD_p\sm\set{0}$ and consider its orthogonal subspace $\Pi(\nu)\subset T_pM$ with respect to $g$.
	We denote by $(\widehat X_1^p,\dots, \widehat X_\n^p)$ the nilpotent approximation of $(X_1,\dots, X_\n)$ at $p$
	and consider the exponential coordinates of the first kind $F_{p,\bX}$ centered at $p$, given by \eqref{eqdef:exponentialcoordinates1}. The {\em spherical factor at $p$ with respect to $\nu$} is the number
	\begin{equation}\label{eq:sphericalfactor}
		\beta_{d,g}(\nu)=
		\max_{z\in\widehat B_p(0,1)}\mathcal H^{\n-1}_\rE((\dd F_{p,\bX})(0)^{-1}(\Pi(\nu))\cap \widehat \B_{p,\bX}(z,1)),	
	\end{equation}
	where $\mathcal H_\rE^{\n-1}$ is the $(\n-1)$-dimensional Hausdorff measure with respect to the Euclidean distance $\R^\n$, see Definition~\ref{d:sizephi}.
	The closed metric unit ball $\widehat \B_{p,\bX}(z,1)$ refers to the SR distance 
	$\widehat{d}_p$ associated with the horizontal orthonormal frame $\widehat \bX^p_h=(\widehat{X}^p_1,\ldots,\widehat{X}^p_\m)$.
\end{definition}

A priori, the previous definition may depend on the system of exponential coordinates of the first kind that we have chosen. We use the Euclidean Hausdorff measure that refers to these coordinates. The next theorem is the key result to prove that the definition \eqref{eq:sphericalfactor} is well posed.

\begin{theorem}[Change of exponential coordinates of the first kind]\label{th:intrinsicfederer}
	Let $(M,\mathcal D,g,\omega)$ be a sub-Riemannian measure manifold and denote by the same symbol $g$ a Riemannian metric on $M$ that extends the sub-Riemannian metric. We assume that $p\in M$ is a regular point and consider two privileged orthonormal frames $\bX=(X_1,\dots, X_\n)$ and $\bY=(Y_1,\dots, Y_\n)$ in an open neighborhood $W$ of $p$. 
	According to \eqref{eqdef:exponentialcoordinates1}, we introduce the exponential coordinates of the first kind  $F_{p,\bX}, F_{p,\bY}\colon V\to W$ associated with $\bX$ and $\bY$ respectively, around $p$. The set $V\subset\R^\n$ is an open neighborhood of $0\in\R^\n$.
	The frames $\widehat\bX^p=(\widehat X_1^p,\dots, \widehat X^p_\n)$ and $\widehat\bY^p=(\widehat Y_1^p,\dots, \widehat Y_\n^p)$ denote the nilpotent approximations of $\bX$ and $\bY$ at $p$, see Definition~\ref{def:nilpApprox}. 
	Then the following facts hold:
	\begin{itemize}
		\item[(i)] the family of maps $\delta_{1/\varepsilon}\circ F_{p,\bY}^{-1}\circ F_{p,\bX}\circ \delta_\varepsilon\colon V\to \mathbb R^\n$ uniformly converges to the restriction of a linear Euclidean isometry $\widehat L\colon \R^\n\to\R^\n$ as $\varepsilon \to 0$;
		\item[(ii)] we have $\widehat L=\dd(F_{p,\bY}^{-1}\circ F_{p,\bX})(0)$ and the matrix associated with $\widehat L$ is block diagonal;
		\item[(iii)] if we denote by $\widehat d_{p,\bX}$ and $\widehat d_{p,\bY}$ the sub-Riemannian distances associated with $\widehat\bX^p$ and $\widehat\bY^p$,
		respectively, then $\widehat d_{p,\bY}(\widehat L(x),\widehat L(y))=\widehat d_{p,\bX}(x,y)$ for every $x,y\in \mathbb R^\n$.
	\end{itemize}
\end{theorem}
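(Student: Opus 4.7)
The plan is to represent $\Phi := F_{p,\bY}^{-1}\circ F_{p,\bX}$ as an exponential of a vector-field combination in the $\bY$-coordinates, rescale by $\delta_{1/\ep}\,(\cdot)\,\delta_\ep$, and pass to the limit by invoking the rescaled-frame convergence of Theorem~\ref{th:tangentspace} together with Remark~\ref{rmk:expidentity}; this strategy sidesteps any explicit Taylor expansion of $\Phi$. As a preliminary step I would introduce $A_{ik}\in C^\infty(W)$ via $X_i=\sum_k A_{ik}Y_k$. Orthonormality of both frames with respect to the Riemannian extension $g$ gives $A(q)A(q)^T=I$ pointwise on $W$. Since both frames are privileged, $X_i(q)\in\cD^{w_i}_q=\spann(Y_1(q),\ldots,Y_{\n_{w_i}}(q))$, so $A_{ik}(q)=0$ whenever $w_k>w_i$, i.e.\ $A(q)$ is block lower triangular with respect to the weight decomposition. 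A standard linear-algebra observation (an orthogonal block-triangular matrix is block-diagonal, with orthogonal diagonal blocks) then forces $A(p)$ to be block-diagonal, which is exactly the structure of $\widehat L$ required in (ii).

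Next, since $F_{p,\bX}(x)=\exp(\sum_i x_i X_i)(p)$, pulling back through $F_{p,\bY}^{-1}$ gives $\Phi(x)=\exp(\sum_i x_i Z_i)(0)$ with $Z_i:=(F_{p,\bY}^{-1})_*X_i=\sum_k(A_{ik}\circ F_{p,\bY})\,\widetilde Y_k^p$. The general identity $\delta_{1/\ep}\circ\exp(tV)(0)=\exp(t\,(\delta_{1/\ep})_*V)(0)$ then yields $\Phi^\ep(x):=\delta_{1/\ep}\Phi(\delta_\ep x)=\exp(\sum_i x_iZ_i^\ep)(0)$ with $Z_i^\ep:=\ep^{w_i}(\delta_{1/\ep})_*Z_i$. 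Using $(\delta_{1/\ep})_*(fW)=(f\circ\delta_\ep)(\delta_{1/\ep})_*W$ and the definition of the rescaled frame $\widetilde Y_k^{p,\ep}$ one obtains
\[
Z_i^\ep=\sum_k \ep^{w_i-w_k}\,(A_{ik}\circ F_{p,\bY}\circ\delta_\ep)\,\widetilde Y_k^{p,\ep}.
\]
As $\ep\to 0$, the terms with $w_k>w_i$ vanish identically by the preliminary step, the terms with $w_k<w_i$ vanish through the factor $\ep^{w_i-w_k}$ combined with the local boundedness coming from $\widetilde Y_k^{p,\ep}\to\widehat Y_k^p$ (Theorem~\ref{th:tangentspace}), and the terms with $w_k=w_i$ produce $\widehat Z_i:=\sum_{k:\,w_k=w_i}A_{ik}(p)\,\widehat Y_k^p$. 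The $C^\infty_{\mathrm{loc}}$ convergence $Z_i^\ep\to\widehat Z_i$, together with the smooth dependence of flows on vector fields, gives $\Phi^\ep\to\exp(\sum_i x_i\widehat Z_i)(0)$ uniformly on compact sets of $V$.

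To identify this limit I would rewrite $\sum_i x_i\widehat Z_i=\sum_k v_k\widehat Y_k^p$ with $v_k:=\sum_{i:\,w_i=w_k}A_{ik}(p)\,x_i$. By Remark~\ref{rmk:expidentity}, the exponential of the first kind of $\widehat\bY^p$ at $0$ is the identity, so $\exp(\sum_k v_k\widehat Y_k^p)(0)=v$, and the limit of $\Phi^\ep$ is the \emph{linear} map $\widehat L(x)=A(p)^T x$. The direct computation $\mathrm{d}\Phi(0)\,e_i=\sum_kA_{ik}(p)\,e_k$ gives $\widehat L=\mathrm{d}\Phi(0)$, and combined with the preliminary step this proves (i) and (ii). For (iii) I would rescale the pointwise identity $\mathrm{d}\Phi_y\widetilde X_i^p(y)=\sum_k A_{ik}(F_{p,\bX}(y))\,\widetilde Y_k^p(\Phi(y))$ by the same $\delta_{1/\ep}(\cdot)\delta_\ep$ and pass to the limit to obtain $\widehat L_*\widehat X_i^p=\sum_{k:\,w_k=w_i}A_{ik}(p)\,\widehat Y_k^p$. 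For $i\le\m$ the sum is supported on $k\le\m$ and the horizontal $\m\times\m$ block of $A(p)$ is orthogonal, so $\widehat L$ sends the horizontal orthonormal frame of $\widehat\bX^p$ to an orthonormal frame of the horizontal distribution of $\widehat\bY^p$, hence is an isometry of the two tangent sub-Riemannian metrics and (iii) follows.

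The genuinely delicate point is the \emph{linearity} of $\widehat L$: for an arbitrary change of privileged coordinates the blow-up of $\Phi$ would only be a $\delta$-homogeneous polynomial map, and it is the rigidity specific to exponential coordinates of the \emph{first} kind, encoded in Remark~\ref{rmk:expidentity}, together with the symmetric use of the privileged hypothesis for \emph{both} frames (which kills the potentially divergent $\ep^{w_i-w_k}$ terms with $w_k>w_i$), that forces the limit to collapse to a linear Euclidean isometry.
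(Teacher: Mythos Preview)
Your argument is correct and follows the same overall mechanism as the paper: write $H_\varepsilon=\delta_{1/\varepsilon}\circ F_{p,\bY}^{-1}\circ F_{p,\bX}\circ\delta_\varepsilon$ as the flow at time $1$ of a rescaled vector field, use the $C^\infty_{\rm loc}$ convergence of Theorem~\ref{th:tangentspace} to pass to the limit, and invoke Remark~\ref{rmk:expidentity} to identify the limiting exponential as a linear map. The differences are organizational rather than structural, but they are worth noting.

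For parts (i)--(ii) your expansion of $Z_i=(F_{p,\bY}^{-1})_*X_i$ directly in the $\widetilde Y_k^p$ frame is more economical than the paper's route, which instead expands $(F_{p,\bY}^{-1}\circ F_{p,\bX})_*\widetilde X_\ell^p$ in the $\widetilde X_s^p$ frame, introduces the auxiliary coefficients $\sigma_j^s$, and then needs a ``moving dual basis'' argument to extract the limits $\lim_{r\to 0}\sigma_j^i(\delta_r\cdot)/r^{w_i-w_j}$. Your version bypasses that detour because in the $\widetilde Y_k^p$ frame the rescaled coefficients are simply $\varepsilon^{w_i-w_k}A_{ik}(F_{p,\bY}(\delta_\varepsilon\cdot))$, whose limits are transparent. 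You also deduce the block-diagonality of $A(p)$ from ``orthogonal and block-triangular'', whereas the paper observes directly that both $Y_j\in\cD^{w_j}$ and $X_i\in\cD^{w_i}$ force the off-diagonal blocks of $C$ and $C^{-1}$ to vanish; either argument works, and in fact your block-triangular conclusion already holds for all $q$, so the terms with $w_k<w_i$ vanish identically rather than merely through the factor $\varepsilon^{w_i-w_k}$.

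For part (iii) the two proofs genuinely diverge. The paper uses the metric blow-up of Theorem~\ref{th:localgroup} twice (once for $\bX$, once for $\bY$) and a triangle-inequality sandwich to transport $\widehat d_{p,\bX}$ to $\widehat d_{p,\bY}$ through the common distance $d$ on $M$. You instead pass to the limit in the pushforward identity $(\Phi^\varepsilon)_*\widetilde X_i^{p,\varepsilon}=Z_i^\varepsilon$ to obtain $\widehat L_*\widehat X_i^p=\widehat Z_i$, and then conclude by the orthogonality of the first horizontal block of $A(p)$. This is legitimate because the $C^\infty_{\rm loc}$ convergence $Z_i^\varepsilon\to\widehat Z_i$ (together with smooth dependence of flows on parameters) gives $\Phi^\varepsilon\to\widehat L$ in $C^1_{\rm loc}$, not merely uniformly, so the pushforwards do converge; you should make this $C^1$ point explicit. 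Your argument is self-contained at the level of frames and does not require the pointed metric blow-up, while the paper's argument is more robust in that it uses only the $C^0$ convergence of $H_\varepsilon$ already established in (i).
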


\begin{proof}
	Since $\bX$ and $\bY$ are both orthonormal privileged frames, there exists a smooth matrix-valued map $C\colon W\to O(\n,\R)$ such that $C(q)=(c_j^i(q))_{ij}$ is a real orthogonal matrix and
	\beq\label{eq:Y_jc^i_j}
	Y_j(q)=\sum_{i=1}^{\n}c_j^i(q)X_i(q)
	\eeq
	for every $q\in W$ and $j=1,\dots,\n$. More precisely, the previous sum can be also written using weights
	\beq\label{eq:Y_jc^i_jw_i}
	Y_j=\sum_{i:w_i=w_j}c_j^iX_i.
	\eeq
	Thus $C(q)$ is a block diagonal matrix and so is $C(q)^{-1}$ for every $q\in U$. Precisely, we have
	\beq\label{eq:X_jc^i_jw_i}
	X_i=\sum_{j=1}^\n(C^{-1})_i^j\,Y_j=\sum_{j:w_j=w_i}(C^{-1})_i^j\,Y_j.
	\eeq
	We introduce the vector fields $\widetilde X_i^p=(F_{p,\bX}^{-1})_*X_i$ and $\widetilde Y_i^p=(F_{p,\bY}^{-1})_*Y_i$ for all $i=1,\ldots,\n$. Up to shrinking $V$, we may assume that both $\widetilde X^p_i$ and $\widetilde Y^p_i$ are well defined on $V$. 
	By definition of $F_{p,\bX}$ and $F_{p,\bY}$, we also have $\widetilde X^p_i(0)=\widetilde Y^p_i(0)=e_i\in\R^\n$ for every $i=1,\dots, \n$. As usual, $(e_1,\ldots,e_\n) $ denotes the canonical basis of $\R^\n$. The differential $\dd(F_{p,\bY}^{-1}\circ F_{p,\bX})(0)$ is related to the matrix $C(p)$, indeed we have
	\begin{align}
		\dd(F_{p,\bY}^{-1}\circ F_{p,\bX})(0)(e_i)&=\dd F_{p,\bY}^{-1}(p)(X_i(p))=\sum_{j=1}^\n(C^{-1}(p))^j_i \dd F_{p,\bY}^{-1}(p)(Y_j(p)) \nonumber\\
		&=\sum_{j:w_j=w_i}(C^{-1}(p))^j_ie_j. \label{eq:dG_p-1F_p}
	\end{align}
	The second equality follows from \eqref{eq:X_jc^i_jw_i}.
	Being $C(p)$ orthogonal, we notice that 
	\[
	(C^{-1}(p))^j_i=(C(p)^T)^j_i=c^i_j(p).
	\]	
	We define the smooth functions $\tilde a_\ell^s\colon V\to\R$ such that
	\beq\label{eq:a_s^l}
	(F_{p,\bY}^{-1}\circ F_{p,\bX})_*(\widetilde X^p_\ell)=\sum_{s=1}^{\n}\tilde a_\ell^s \widetilde X^p_s.
	\eeq
	Applying $(F_{p,\bY}^{-1})_*$ to equality \eqref{eq:Y_jc^i_jw_i}, we get
	\begin{align*}
		\widetilde Y^p_j&=(F_{p,\bY}^{-1})_*\pa{\sum_{i:w_i=w_j}c^i_j X_i}=
		\sum_{i:w_i=w_j}c^i_j\circ F_{p,\bY}\, (F_{p,\bY}^{-1})_*((F_{p,\bX})_*(\widetilde X^p_i)) \\
		&=\sum_{\substack{i:w_i=w_j\\1\le s\le\n}}c^i_j\circ F_{p,\bY}\,\tilde a_i^s\widetilde X^p_s  
	\end{align*}
	where the last equality is a consequence of \eqref{eq:a_s^l}.
	Evaluating the previous equalities at $0$, we have proved that
	\[
	\sum_{i=1}^\n c^i_j(p)\,\tilde a_i^s(0)=\sum_{i:w_i=w_j}c^i_j(p)\,\tilde a_i^s(0)=\delta_j^s,
	\]
	where $\delta_j^s$ is the Kronecker delta. In other words, the matrix $A=(\tilde{a}^s_i(0))_{si}$ satisfies $A=C(p)^{-1}$.
	We define the smooth functions 
	\beq\label{eqdef:sigmaij}
	\sigma^s_j=\sum_{i:w_i=w_j}c^i_j\circ F_{p,\bY}\,\tilde a_i^s
	\eeq
	and observe that 
	\[
	\widetilde Y^p_j=\sum_{s=1}^{\n}\sigma^s_j\widetilde X^p_s.
	\]
	Using dilations $\delta_r$, it follows that
	\beq\label{eq:Y_j{p,r}}
	 r^{w_j}(\delta_{1/r})_*\widetilde Y^p_j=\sum_{s=1}^{\n}\sigma^s_j\circ\delta_r\,r^{w_j-w_s}[r^{w_s}(\delta_{1/r})_*\widetilde X^p_s].
	\eeq
	Theorem~\ref{th:tangentspace} implies that the ordered sets	
	\[
	\widetilde\bX^{p,r}=(\widetilde X^{p,r}_1,\dots, X^{p,r}_\n)\quad\text{and}\quad \widetilde\bY^{p,r}=(\widetilde Y^{p,r}_1,\dots, Y^{p,r}_\n)
	\]
	with $\widetilde X^{p,r}_i=r^{w_i}(\delta_{1/r})_*\widetilde X^p_i$ and $\widetilde Y^{p,r}_i=r^{w_i}(\delta_{1/r})_*\widetilde Y^p_i$ are frames for sufficiently small $r>0$ and uniformly converge to 
	$\widehat\bX^p=(\widehat X_1^p,\dots, \widehat X^p_\n)$ and $\widehat\bY^p=(\widehat Y_1^p,\dots, \widehat Y_\n^p)$ on compact sets, respectively, as $r\to0$. We may define the ``moving dual basis'' $\eta^{p,r}_j\colon \R^\n\to\R$ such that
	\beq
	\eta^{p,r}_j(\widetilde X^{p,r}_i)=\delta^j_i
	\eeq
	for all $i,j=1,\ldots,\n$. It uniformly converges to $\eta^p_j$ on compact sets, as $r\to0$, where
	\beq
	\eta^{p}_j(\widehat X^{p}_i)=\delta^j_i.
	\eeq
	Applying the moving dual basis to \eqref{eq:Y_j{p,r}}, we get
	\[
	\eta^{p,r}_i(\widetilde Y^{p,r}_j)=\sum_{s=1}^{\n}\sigma^s_j\circ\delta_r\,r^{w_j-w_s}\eta^{p,r}_i(\widetilde X^{p,r}_s)
	=\sigma^i_j\circ\delta_r\,r^{w_j-w_i},
	\]
	therefore for every $i,j=1,\ldots,\n$ there exists
	\beq\label{eq:intrinsicDiff}
	\lim_{r\to0} \frac{\sigma^i_j\circ\delta_r}{r^{w_i-w_j}}=\eta^p_i(\widehat Y^p_j).
	\eeq
	The previous limits can be read as a kind of intrinsic differentiability result. Combining the previous limit and formula \eqref{eqdef:sigmaij}, it follows that
	\begin{align}
	\lim_{r\to0} \frac{\tilde a^s_i\circ\delta_r}{r^{w_s-w_i}}&=\lim_{r\to0} \frac{\sum_{j:w_j=w_i}(C^{-1})^j_i\circ F_{p,\bY}\circ\delta_r\;\sigma_j^s\circ\delta_r}{r^{w_s-w_i}}\nonumber \\
	&=\sum_{j:w_j=w_i}(C^{-1})^j_i(p)\,\lim_{r\to0}\frac{\sigma_j^s\circ\delta_r}{r^{w_s-w_j}}\nonumber \\
	&=\sum_{j:w_j=w_i}(C^{-1})^j_i(p)\,\eta^p_s(\widehat Y^p_j). \label{eq:limita_i^s}
	\end{align}
	We now define the family of mappings
	\[
	H_\ep=\delta_{1/\varepsilon}\circ F_{p,\bY}^{-1}\circ F_{p,\bX}\circ \delta_\varepsilon,
	\]
	observing that 
	\begin{align*}
	H_\ep(x)&=\delta_{1/\ep}\left(F_{p,\bY}^{-1}\Big(\exp\Big(\sum_{i=1}^{\n}\ep^{w_i}x_i\,(F_{p,\bX})_*\widetilde X_i\Big)(p)\Big)\right) \\
	&=\exp\Big(\sum_{i=1}^{\n}\ep^{w_i}x_i\,(\delta_{1/\ep})_*(F_{p,\bY}^{-1}\circ F_{p,\bX})_*\widetilde X_i\Big)\circ (\delta_{1/\ep}\circ F_{p,\bY}^{-1})(p) \\
	&=\exp\Big(\sum_{i=1}^{\n}\ep^{w_i}x_i\big(\sum_{s=1}^{\n}\tilde a_i^s\circ\delta_\ep\, (\delta_{1/\ep})_*\widetilde X^p_s\big)\Big)(0),
	\end{align*}
where the last equality follows from \eqref{eq:a_s^l}. From the previous definitions, we obtain
\begin{align*}
	H_\ep(x)&=\exp\Big(\sum_{i=1}^{\n}\ep^{w_i}x_i\big(\sum_{s=1}^{\n}\tilde a_i^s\circ\delta_\ep\, (\delta_{1/\ep})_*\widetilde X^p_s\big)\Big)(0) \\
	&=\exp\Big(\sum_{i=1}^{\n}x_i\big(\sum_{s=1}^{\n}\frac{\tilde a_i^s\circ\delta_\ep}{\ep^{w_s-w_i}}\, \widetilde X^{p,\ep}_s\big)\Big)(0).
\end{align*}
Due to the limit \eqref{eq:limita_i^s} and the uniform convergence of $\widetilde X^p_s$, it follows that 
\begin{align*}
\lim_{\ep\to0}H_\ep(x)&=\exp\Big(\sum_{i=1}^\n x_i\sum_{s=1}^\n \sum_{j:w_j=w_i}(C^{-1}(p))^j_i\,\eta^p_s(\widehat Y^p_j) \widehat X^p_s\Big)(0) \\
&=\exp\Big(\sum_{j=1}^\n\sum_{i=1}^\n(C^{-1}(p))^j_ix_i\,\widehat Y^p_j\Big)(0).
\end{align*}
Taking into account Remark~\ref{rmk:expidentity}, we have proved that
\begin{equation}\label{eq:limH_ep}
	\lim_{\ep\to0}H_\ep(x)=C^{-1}(p)x\in\R^\n.
\end{equation}
We set the linear mapping $\widehat L(x)=C(p)^{-1}x$ for every $x\in\R^\n$, that is a Euclidean isometry, hence the first point of our claim is proved. 
The second claim follows combining \eqref{eq:dG_p-1F_p} and \eqref{eq:limH_ep}.

For the third claim, we arbitrarily fix $R>0$ and notice that \eqref{eq:localgroup} gives 
\[
\lim_{\ep \to0}\sup_{x,y\in\widehat B(0,R)}\left|\frac{d(F_{p,\bX}(\delta_\ep x),F_{p,\bX}(\delta_\ep y))}{\ep}-\widehat d_{p,\bX}(x,y)\right|=0,
\]
where $\widehat d_{p,\bX}$ is the sub-Riemannian distance associated with $\widehat \bX^p$
and $\widehat B_{p,\bX}(0,R)$ is the metric ball with respect to $\widehat d_{p,\bX}$.
Denoting by $o(1)$ any infinitesimal function as $\ep\to0$, we get
\begin{align*}
		\sup_{x,y\in  \widehat B_{p,\bX}(0,R)}|\widehat d_{p,\bY}(&H_\varepsilon(x),H_\varepsilon(y))-\widehat d_{p,\bX}(x,y)|\leq o(1) \\
		&+\sup_{x,y\in \widehat B_{p,\bX}(0,R)}\left|\widehat d_{p,\bY}(H_\varepsilon(x),H_\varepsilon(y))-\frac{d(F_{p,\bX}(\delta_\varepsilon x),F_{p,\bX}(\delta_\varepsilon y))}{\ep}\right|.
\end{align*}
The second addend of the previous inequality can be written as 
\begin{align}
\sup_{\xi,\xi'\in H_\ep(\widehat B_{p,\bX}(0,R))}\left|\widehat d_{p,\bY}(\xi,\xi')-\frac{d(F_{p,\bY}(\delta_\ep\xi),F_{p,\bY}(\delta_\ep\xi'))}{\ep}\right|.
\end{align}
The uniform convergence of $H_\ep$ proved in the previous step yields $R_1>R$ such that
\[
H_\ep(\widehat B_{p,\bX}(0,R))\subset \widehat B_{p,\bY}(0,R_1)
\]
for $\ep>0$ sufficiently small. The metric ball $\widehat B_{p,\bY}(0,R_1)$ is defined by $\widehat d_{p,\bY}$. We get
\begin{align*}
	\sup_{x,y\in  \widehat B_{p,\bX}(0,R)}|\widehat d_{p,\bY}(H_\varepsilon(x)&,H_\varepsilon(y))-\widehat d_{p,\bX}(x,y)|\leq o(1) \\
	&+\sup_{\xi,\xi'\in \widehat B_{p,\bY}(0,R_1)}\left|\widehat d_{p,\bY}(\xi,\xi')-\frac{d(F_{p,\bY}(\delta_\varepsilon \xi),F_{p,\bY}(\delta_\varepsilon \xi'))}{\ep}\right|.
\end{align*}
By the limit \eqref{eq:localgroup}, we get
\begin{equation}
	\lim_{\ep\to0}\sup_{x,y\in  \widehat B_{p,\bX}(0,R)}|\widehat d_{p,\bY}(H_\varepsilon(x),H_\varepsilon(y))-\widehat d_{p,\bX}(x,y)|=0,
\end{equation}
that immediately proves our last claim, therefore concluding the proof.
\end{proof}

\begin{corollary}
Let $(M,\mathcal D,g,\omega)$ be a sub-Riemannian measure manifold and denote by the same symbol $g$ a Riemannian metric on $M$ that extends the sub-Riemannian metric. We consider two privileged orthonormal frames $\bX=(X_1,\dots, X_\n)$ and $\bY=(Y_1,\dots, Y_\n)$ in an open neighborhood $W$ of $p\in M$ and denote by $F_{p,\bX}, F_{p,\bY}\colon V\to  W$ the exponential coordinates of the first kind associated with $\bX$ and $\bY$ around $p$, respectively, see \eqref{eqdef:exponentialcoordinates1}. 
Let $\nu\in\cD_p\sm\set{0}$ and denote by $\Pi(\nu)\subset T_pM$ its orthogonal subspace with respect to $g$. We consider the nilpotent approximations $\widehat \bX^p=(\widehat X_1^p,\dots, \widehat X_\n^p)$ and $\widehat \bY^p=(\widehat Y_1^p,\dots, \widehat Y_\n^p)$  of $\bX$ and $\bY$ at $p$.
If $\widehat L\colon \R^\n\to\R^\n$ is the Euclidean isometry of Theorem~\ref{th:intrinsicfederer}, it holds
	\begin{equation}\label{eq:=sphericalfactor}
	\mathcal H^{\n-1}_\rE((\dd F_{p,\bX})(0)^{-1}(\Pi(\nu))\cap \widehat \B_{p,\bX}(z,1))=\mathcal H^{\n-1}_\rE((\dd F_{p,\bY})(0)^{-1}(\Pi(\nu))\cap \widehat \B_{p,\bY}(\widehat L(z),1)),	
	\end{equation}
	where $\mathcal H_\rE^{\n-1}$ denotes the standard Euclidean $(\n-1)$-dimensional Hausdorff measure of $\R^\n$ and $z\in \R^n$.
\end{corollary}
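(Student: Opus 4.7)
The plan is to read off the corollary as a direct consequence of the three statements of Theorem~\ref{th:intrinsicfederer}, using that $\widehat L$ simultaneously intertwines the Euclidean linear structure, the tangent sub-Riemannian distance, and (by construction) the two exponential charts.

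First, I would use part (ii) of Theorem~\ref{th:intrinsicfederer}, which gives
\[
\widehat L \;=\; \dd(F_{p,\bY}^{-1}\circ F_{p,\bX})(0) \;=\; (\dd F_{p,\bY})(0)^{-1}\circ (\dd F_{p,\bX})(0).
\]
Applying this identity to $\Pi(\nu)\subset T_pM$ yields
\[
(\dd F_{p,\bY})(0)^{-1}(\Pi(\nu)) \;=\; \widehat L\bigl((\dd F_{p,\bX})(0)^{-1}(\Pi(\nu))\bigr),
\]
so the two linear subspaces appearing on the left and right of \eqref{eq:=sphericalfactor} correspond to each other under $\widehat L$.

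Next, by part (iii) of Theorem~\ref{th:intrinsicfederer}, $\widehat L\colon(\R^\n,\widehat d_{p,\bX})\to(\R^\n,\widehat d_{p,\bY})$ is an isometry of metric spaces. Consequently, it sends closed $\widehat d_{p,\bX}$-balls to closed $\widehat d_{p,\bY}$-balls with the same radius, centered at the image of the center:
\[
\widehat L\bigl(\widehat\B_{p,\bX}(z,1)\bigr) \;=\; \widehat\B_{p,\bY}(\widehat L(z),1).
\]
Combining the two observations, $\widehat L$ maps the intersection
$(\dd F_{p,\bX})(0)^{-1}(\Pi(\nu))\cap \widehat\B_{p,\bX}(z,1)$
bijectively onto
$(\dd F_{p,\bY})(0)^{-1}(\Pi(\nu))\cap \widehat\B_{p,\bY}(\widehat L(z),1)$.

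Finally, I would invoke part (i): $\widehat L$ is a linear Euclidean isometry of $\R^\n$. Such a map preserves the Euclidean $(\n-1)$-dimensional Hausdorff measure $\mathcal H^{\n-1}_\rE$, and applying this to the two sets just identified yields \eqref{eq:=sphericalfactor}. There is no real obstacle here: the substantive content has already been carried by Theorem~\ref{th:intrinsicfederer}, and the corollary is merely the assembly of its three conclusions. If desired, one could also notice that the block-diagonal structure of $\widehat L$ (again from (ii)) shows that $\widehat L$ is automatically compatible with the $\delta$-homogeneous structure, but this is not needed for the present argument.
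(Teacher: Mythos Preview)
Your proof is correct and essentially identical to the paper's own argument: both use part (ii) of Theorem~\ref{th:intrinsicfederer} to identify $\widehat L\bigl((\dd F_{p,\bX})(0)^{-1}(\Pi(\nu))\bigr)$ with $(\dd F_{p,\bY})(0)^{-1}(\Pi(\nu))$, part (iii) to identify $\widehat L(\widehat\B_{p,\bX}(z,1))$ with $\widehat\B_{p,\bY}(\widehat L(z),1)$, and the Euclidean isometry property from (i) to conclude that $\mathcal H^{\n-1}_\rE$ is preserved. The only difference is the order of presentation.
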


\begin{proof}
Since $\widehat{L}$ is an Euclidean isometry, it follows that  
\[
\mathcal H^{\n-1}_\rE((\dd F_{p,\bX})(0)^{-1}(\Pi(\nu))\cap \widehat \B_{p,\bX}(z,1))
\]
is equal to 
\[
\mathcal H^{\n-1}_\rE\Big(\widehat L\big((\dd F_{p,\bX})(0)^{-1}(\Pi(\nu))\big)\cap \widehat L\big(\widehat \B_{p,\bX}(z,1)\big)\Big).
\] 
From claim (ii) of Theorem~\ref{th:intrinsicfederer}, we get
\beq\label{eq:LhatFG}
\widehat L\big((\dd F_{p,\bX})(0)^{-1}(\Pi(\nu))\big)=(\dd F_{p,\bY})(0)^{-1}(\Pi(\nu)).
\eeq
Claim (iii) of Theorem~\ref{th:intrinsicfederer} yields $\widehat L\big(\widehat \B_{p,\bX}(z,1)\big)=\widehat \B_{p,\bY}(L(z),1)$. Thus, using the first equality and \eqref{eq:LhatFG}, the proof of \eqref{eq:=sphericalfactor} is complete.
\end{proof}

\begin{corollary}\label{cor:betaintrinsic}
Let $(M,\mathcal D,g)$ be a sub-Riemannian manifold with a regular point $p\in M$ and a Riemannian metric $g$ that extends the sub-Riemannian metric. If $d$ is the associated sub-Riemannian distance and $\nu\in\cD_p\sm \set{0}$, then the spherical factor $\beta_{d,g}(\nu)$ is independent of the choice of the exponential coordinates of the first kind.
\end{corollary}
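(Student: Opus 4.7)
The proof reduces to combining the preceding Theorem~\ref{th:intrinsicfederer} with the Corollary just established. The plan is as follows. Given two privileged orthonormal frames $\bX$ and $\bY$ on a neighborhood of the regular point $p$, and the associated exponential coordinates $F_{p,\bX}$, $F_{p,\bY}$, I will show that the two candidate expressions for $\beta_{d,g}(\nu)$, namely
\begin{equation*}
	\beta_{\bX}(\nu)=\max_{z\in\widehat B_{p,\bX}(0,1)}\mathcal H^{\n-1}_\rE\bigl((\dd F_{p,\bX})(0)^{-1}(\Pi(\nu))\cap \widehat \B_{p,\bX}(z,1)\bigr)
\end{equation*}
and the analogous $\beta_{\bY}(\nu)$, coincide.

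First, I invoke the previous Corollary, which provides the pointwise identity
\begin{equation*}
\mathcal H^{\n-1}_\rE\bigl((\dd F_{p,\bX})(0)^{-1}(\Pi(\nu))\cap \widehat \B_{p,\bX}(z,1)\bigr)=\mathcal H^{\n-1}_\rE\bigl((\dd F_{p,\bY})(0)^{-1}(\Pi(\nu))\cap \widehat \B_{p,\bY}(\widehat L(z),1)\bigr),
\end{equation*}
for every $z\in\R^\n$, where $\widehat L=\dd(F_{p,\bY}^{-1}\circ F_{p,\bX})(0)$ is the linear Euclidean isometry furnished by Theorem~\ref{th:intrinsicfederer}. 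Thus the integrands agree under the change of variable $w=\widehat L(z)$.

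Next, I need to control the domain of the maximum. Since $F_{p,\bY}^{-1}\circ F_{p,\bX}$ fixes $0$, its differential $\widehat L$ satisfies $\widehat L(0)=0$. By Theorem~\ref{th:intrinsicfederer}(iii), the map $\widehat L\colon (\R^\n,\widehat d_{p,\bX})\to(\R^\n,\widehat d_{p,\bY})$ is a distance-preserving bijection, hence $\widehat L\bigl(\widehat B_{p,\bX}(0,1)\bigr)=\widehat B_{p,\bY}(0,1)$. Consequently, the change of variable $w=\widehat L(z)$ is an admissible reparametrization of the index set in the maximum, giving
\begin{equation*}
	\max_{z\in\widehat B_{p,\bX}(0,1)}\mathcal H^{\n-1}_\rE\bigl((\dd F_{p,\bX})(0)^{-1}(\Pi(\nu))\cap \widehat \B_{p,\bX}(z,1)\bigr)=\max_{w\in\widehat B_{p,\bY}(0,1)}\mathcal H^{\n-1}_\rE\bigl((\dd F_{p,\bY})(0)^{-1}(\Pi(\nu))\cap \widehat \B_{p,\bY}(w,1)\bigr),
\end{equation*}
which is precisely $\beta_{\bX}(\nu)=\beta_{\bY}(\nu)$. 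There is essentially no main obstacle here: all the substantive work has been carried out in Theorem~\ref{th:intrinsicfederer} (producing the isometry $\widehat L$ through the blow-up of the coordinate change) and in the preceding Corollary (turning the isometry into equality of Hausdorff measures of the sliced unit balls). The present statement is a transparent consequence of these two results, the only point to verify being $\widehat L(0)=0$ so that the supremum domain is mapped bijectively onto itself.
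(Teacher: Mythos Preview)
Your proof is correct and follows essentially the same approach as the paper: invoke the identity \eqref{eq:=sphericalfactor} from the preceding Corollary and use the isometry $\widehat L$ from Theorem~\ref{th:intrinsicfederer} to transport the maximization domain. Your version is in fact slightly more explicit than the paper's, spelling out why $\widehat L(\widehat B_{p,\bX}(0,1))=\widehat B_{p,\bY}(0,1)$ via $\widehat L(0)=0$ and part~(iii), whereas the paper simply says the claim follows by combining \eqref{eq:=sphericalfactor} with the definition of the spherical factor.
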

\begin{proof}
We consider two privileged orthonormal frames $\bX=(X_1,\dots, X_\n)$ and $\bY=(Y_1,\dots, Y_\n)$ in an open neighborhood $W$ of $p\in M$. Following \eqref{eqdef:exponentialcoordinates1}, we denote by $F_{p,\bX}, F_{p,\bY}\colon V_q\to  W_q$ the exponential coordinates of the first kind associated with $\bX$ and $\bY$ at $p$. The open sets $V_q\subset\R^\n$ and $W_q\subset M$ are neighborhoods of $0$ and $q$, respectively. 
We consider the Euclidean isometry $\widehat{L}$ of Theorem~\ref{th:intrinsicfederer}, hence combining \eqref{eq:=sphericalfactor} and the definition of spherical factor \eqref{eq:sphericalfactor} our claim is established.
\end{proof}

\section{Uniform nilpotent approximation}\label{sec:blowup}

In this section we prove the uniform convergence of the rescaled sub-Riemannian distances to the distance of the nilpotent approximation, where the ``blow-up point'' varies in a compact set. 
Theorem~\ref{th:DVGen} is the key tool to establish the {\em uniform nilpotent approximation} (Theorem~\ref{thm:unifblowup}).

In the proof of the next theorem both Definition~\ref{def:SRdX_j} 
and Remark~\ref{rem:equivSRX_j} are considered.

\begin{theorem}[Uniform convergence of SR distances]\label{th:DVGen} Let us consider $\m\leq\n$, a compact metric space $\Xi$, two frames $\bX^{j,q}_h=(X_1^{j,q},\dots, X_\m^{j,q})$ and $\bX^q_h=(X_1^q,\dots, X_\m^q)$ on the Euclidean ball $B_\rE(x_0,R_0)\subset\R^\n$ satisfying the Chow's condition for each $q\in \Xi$ and $j\in\N$.
We assume that all partial derivatives 
	$$
	B_{\rm E}(x_0,R_0)\times \Xi\ni (x,q)\mapsto \partial_x^\alpha X_i^q(x)\quad \text{and}\quad B_{\rm E}(x_0,R_0)\times \Xi\ni (x,q)\mapsto \partial _x^\alpha X_i^{j,q}(x)
	$$
	are continuous for every $i=1,\dots, \m$, $j\in \mathbb N$ and every multi-index 
	$\alpha\in\N^\n$. 
	We also suppose that for each $i=1,\dots,\m$ the vector field $X_i^{j,q}$ converges to $X_i^q$ in 
	$B_{\rm E}(x_0,R_0)\subset\R^\n$ with respect to the $C_{\rm loc}^\infty$-topology as $j\to \infty$, uniformly with respect to $q\in \Xi$.

Then there exists $0<r_0<R_0$, such that the SR distance $d_q^j$ associated with $\bX^{j,q}_h$ on $B_\rE(x_0,R_0)$ converges to the SR distance $d_q$ associated with $\bX^q_h$ on $B_\rE(x_0,R_0)$ in the topology of $L^\infty(B_{\rm E}(x_0,r_0)\times B_{\rm E}(x_0,r_0))$ as $j\to \infty$, uniformly with respect to $q\in \Xi$.
\end{theorem}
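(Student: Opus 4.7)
\textbf{Plan.}  The plan is to argue by contradiction and compactness, coupling a uniform ball--box estimate with a strong--weak pairing argument for the controlled ODEs. If uniform $L^\infty$-convergence failed, the compactness of $\Xi$ and of $\overline{B_\rE(x_0,r_0)}$ would produce sequences $j_k\to\infty$, $q_k\to q^*$, $x_k\to x^*$, $y_k\to y^*$ with $|d^{j_k}_{q_k}(x_k,y_k)-d_{q_k}(x_k,y_k)|\ge\ep_0>0$. It then suffices to show that both $d^{j_k}_{q_k}(x_k,y_k)$ and $d_{q_k}(x_k,y_k)$ converge to $d_{q^*}(x^*,y^*)$; the second assertion is the special case of the first obtained by taking $X_i^{j,q}:=X_i^q$ independent of $j$, the hypotheses being trivially satisfied because joint continuity of $X_i^q$ in $(x,q)$ on the compact set $\overline{B_\rE(x_0,R_0/2)}\times\Xi$ already forces $X_i^{q_k}\to X_i^{q^*}$ uniformly on compacts.

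\textbf{Uniform ball--box.}  Next I would choose $r_0\in(0,R_0)$ small enough to guarantee a uniform-in-$q$ Nagel--Stein--Wainger ball--box estimate: joint continuity of the iterated Lie brackets and Chow's condition at each $q\in\Xi$ provide a uniform step $s$ and a uniform spanning family of brackets, hence constants $c_1,c_2>0$ with
\[
c_1\|x-y\|\le d_q(x,y)\le c_2\|x-y\|^{1/s}
\]
for all $q\in\Xi$ and $x,y\in B_\rE(x_0,r_0)$. The uniform $C^\infty_{\rm loc}$-convergence $X_i^{j,q}\to X_i^q$ transfers the same estimate to $d^j_q$ for $j\ge j_0$ with slightly enlarged constants and, crucially, forces near-optimal horizontal curves starting and ending in $B_\rE(x_0,r_0)$ to remain inside $B_\rE(x_0,R_0/2)$, well inside the domain of the frame.

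\textbf{Upper and lower bounds on the fixed sequence.}  For the $\limsup$-inequality I would fix $\ep>0$, take via Remark~\ref{rem:equivSRX_j} a horizontal curve $\gamma\colon[0,T]\to B_\rE(x_0,R_0/2)$ for $\bX^{q^*}_h$ from $x^*$ to $y^*$ with control $h$ of norm $\le 1$ and $T\le d_{q^*}(x^*,y^*)+\ep$, and plug the same $h$ into the Cauchy problem driven by $X_i^{j_k,q_k}$ with initial condition $x^*$. Gronwall's lemma and the uniform convergence $X_i^{j_k,q_k}\to X_i^{q^*}$ on compacts yield a solution $\gamma_k\to\gamma$ uniformly on $[0,T]$, so $\gamma_k(T)\to y^*$; the uniform ball--box then closes the small gaps $x_k\leftrightarrow x^*$, $\gamma_k(T)\leftrightarrow y^*$, $y^*\leftrightarrow y_k$ by horizontal curves for $\bX^{j_k,q_k}_h$ of vanishing $d^{j_k}_{q_k}$-length. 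For the $\liminf$-inequality I would pick near-optimal curves $\gamma_k\colon[0,T_k]\to B_\rE(x_0,R_0/2)$ realising $d^{j_k}_{q_k}(x_k,y_k)$ up to $1/k$, with controls $h_k$ of $L^\infty$-norm $\le 1$. Equi-boundedness of the vector fields makes the $\gamma_k$ equi-Lipschitz in the Euclidean metric, so Arzela--Ascoli extracts a uniform limit $\gamma^*$ and Banach--Alaoglu a weak-$*$ limit $h^*$ with $\|h^*\|_\infty\le 1$; passing to the limit in the integral form of the ODE via the decomposition
\[
h_k^{(i)}(\tau)\,X_i^{j_k,q_k}(\gamma_k(\tau))=h_k^{(i)}(\tau)\,X_i^{q^*}(\gamma^*(\tau))+R_k^{(i)}(\tau),\qquad \|R_k^{(i)}\|_{L^\infty}\to 0,
\]
shows that $\gamma^*$ is horizontal for $\bX^{q^*}_h$ with control $h^*$ and reaches $y^*$ in time $T^*=\lim_k T_k$, so $d_{q^*}(x^*,y^*)\le T^*\le\liminf_k d^{j_k}_{q_k}(x_k,y_k)$.

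\textbf{Main obstacle.}  The genuinely delicate point is the last passage to the limit: the integrand of the Volterra equation is bilinear in $(h_k,\text{vector field})$, and both factors vary with $k$, one weakly-$*$ in $L^\infty$ and the other strongly but non-trivially. The decoupling works only because $X_i^{j_k,q_k}\circ\gamma_k\to X_i^{q^*}\circ\gamma^*$ \emph{strongly} in $L^\infty$ (which is precisely what the vanishing of $R_k^{(i)}$ encodes), reducing the pairing to weak-$*$ convergence of $h_k$ against a single strongly convergent test function. Everything else, the uniform ball--box and the Gronwall comparison of Step~3, is classical; the principal accounting task is to track uniformity in $q\in\Xi$, which is where the joint continuity of the vector fields and their brackets on $\overline{B_\rE(x_0,R_0/2)}\times\Xi$ is exploited repeatedly.
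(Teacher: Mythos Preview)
Your proposal is correct and follows essentially the same approach as the paper: uniform Nagel--Stein--Wainger/ball--box estimates to confine near-optimal curves, Gronwall comparison for the upper bound, and weak-$*$ compactness of controls plus Arzel\`a--Ascoli for the lower bound, wrapped in a contradiction-and-compactness argument over $\Xi$. The only organizational difference is that the paper first isolates the single-parameter case $\Xi=\{p\}$ (its Steps~1--6) and then bootstraps to general $\Xi$ by contradiction (Step~7), whereas you run the liminf/limsup argument directly inside the contradiction against the diagonal sequence $(j_k,q_k)$; both routes rely on the same key observation that $X_i^{j_k,q_k}\to X_i^{q^*}$ strongly on compacts, which is what makes the bilinear pairing with the weak-$*$ limit of controls go through.
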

\begin{proof}
We first notice that the step $s^q(x)\geq 1$ of the frame $\bX_h^q$ at $x\in \B_\rE(x_0,R_0')$ is upper semicontinuous with respect to $(q,x)\in \Xi\times \B_\rE(x_0,R_0')$ for any fixed $0<R_0'<R_0$, hence it is bounded on the compact set $\Xi\times \B_\rE(x_0,R_0')$. Thus, up to slightly reducing $R_0$, the compactness of $\Xi$ allows us to consider $s\in\N$ such that $s^q(x)\leq s$ for every $x\in B_{\rm E}(x_0,R_0)$ and $q\in \Xi$. Since each $X_i^{j,q}$ is converging to $X_i^q$ in the $C^\infty_{\rm loc}$-topology and uniformly as $q$ varies in $ \Xi$, we can choose a sufficiently large $\widetilde J\in \mathbb N$ such that $s_j^q(x)\leq s$ for every $j\geq \widetilde J$, $q\in \Xi$ and $x\in B_{\rm E}(x_0,R_0)$. 
Up to ignoring a finite number of terms in the sequence of frames and relabeling the indexes, we may assume that the uniform bound on the step holds for every $j\in \mathbb N$.

Due to \cite[Claim~3.3]{BBP} joined with \cite[Proposition~5.8]{BBP} (see also \cite[Proposition~1.1 and Theorem~4]{NagelSteinWainger85}), for a smooth
family of vector fields $\bY_h=(Y_1,\ldots,Y_\m)$ on an open set $\Omega$, that satisfy Chow's condition,
for any compact set $K\subset \Omega$, there exists a constant $C>0$ such that 
\begin{equation}\label{eq:C_KdjqY}
	\frac1{C}|x-y|\le d_{\bY_h,\Omega}(x,y)\le C|x-y|^{1/s}
\end{equation}
for $x,y\in K$, where $d_{\bY_h,\Omega}$ is the SR distance considering admissible curves contained in $\Omega$. According to \cite[Claim~3.3]{BBP}, the key point of this result is that the constant $C>0$ only depends on 
$K$, the dimension $\n$ of the space, the rank $\m$, the step $s$ (depending on $K$), the uniform upper bound on the $C^\ell(K)$-norms of the vector fields $Y_i$ for some large enough $\ell\in\N$ (depending on $s$) and the positive lower bound of
\[
\inf_{x\in K}\max_{|I_i|\leq s} \det ( Y_{I_1}(x)| \dots|Y_{I_\n}(x)),
\]
where $Y_{I_j}$ denotes the iterated commutators with respect to the multi-index $I_j$. 
As a consequence of our assumptions,
and in particular from the uniform convergence of $X_i^{j,q}$ to $X_i^q$ in the $C^\infty_{\rm loc}$-topology as $q$ varies in $\Xi$ and $i=1,\ldots,\m$,
considering $\Omega=B_{\rE}(x_0,R_0)$ and the compact set $K= \B_{\rE}(x_0,\kappa R_0)$ with a fixed $0<\kappa<1$, we can find $C_\kappa\ge1$, depending on $\kappa$ and $\B_\rE(x_0,R_0)$, and $J_0\in\N$ such that for every $q\in \Xi$ and $j\ge J_0$,  the following inequalities hold
\begin{equation}\label{eq:C_Kdjq}
	\frac1{C_\kappa}|x-y|\le d_q(x,y)\le C_\kappa|x-y|^{1/s}\quad\text{and}\quad  \frac1{C_\kappa}|x-y|\le d_{q}^j(x,y)\le C_\kappa|x-y|^{1/s}
\end{equation}
where $x,y\in \B_{\rE}(x_0,\kappa R_0)$. We recall that $d_q^j$ is the sub-Riemannian distance 
associated with the frame $\bX^{j,q}_h$ on $B_\rE(x_0,R_0)$ and $d_q$ is the sub-Riemannian distance 
associated with the frame $\bX^q_h$ on $B_\rE(x_0,R_0)$, according to Definition~\ref{def:SRdX_j}.
Due to \cite[Theorem~4]{NagelSteinWainger85}, it is not difficult to notice that the distance used in \cite{BBP} 
is equivalent to our $d_q$ and $d_q^j$, up to a geometric constant.
Notice that $C_\kappa$ also depends on $R_0$ and can be arbitrarily large as $\kappa$ becomes close to 1. We have denoted by 
$\B_{\rE}(x_0,\kappa R_0)$ the closed Euclidean ball
of center $x_0$ and radius $\kappa R_0$. 

If $T>0$, $x\in B_\rE(x_0, R_0)$, $h\in L^\infty([0,T],\R^\m)$, $j\in \N$ and $q\in \Xi$, it is convenient to define  $\gamma_{h,x}^{j,q},\gamma_{h,x}^q\colon [0,T]\to \R^\n$ as the absolutely continuous curves such that $\gamma_{h,x}^{j,q}(0)=\gamma_{h,x}^q(0)=x$ that almost everywhere on $ [0,T]$ satisfy
\beq\label{eq:Cauchykappa0}
\dot \gamma_{h,x}^{j,q}=\sum_{i=1}^\m h_i\, X_i^{j,q}\circ \gamma_{h,x}^{j,q},\qquad\dot \gamma_{h,x}^q=\sum_{i=1}^\m h_i\, X_i^{q}\circ \gamma_{h,x}^q.
\eeq
We divide the proof into several steps.
The next step can be seen as ``uniform version'' of \cite[Lemma 3.2]{DonVittone2019Comp} with respect to $q\in \Xi$, where another difference is that the vector fields are only defined on the open ball $B_\rE(x_0,R_0)$. 

 {\bf Step 1.} {\it Let us consider $J_0\in\N$ such that the second estimate of \eqref{eq:C_Kdjq} holds for all $j\ge J_0$. For every $\kappa_0\in (0,\kappa)$, there exists $T_0>0$ such that the curves $\gamma_{h,x}^{j,q}, \gamma_{h,x}^q$ satisfying \eqref{eq:Cauchykappa0} are well defined in the interval $[0,T_0]$ and are contained in $\B_\rE (x_0,\kappa R_0)$, for every $x\in \B_\rE(x_0,\kappa_0R_0)$, $h\in L^\infty([0,T_0],\R^\m)$, $\|h\|_\infty\leq 1$, $q\in \Xi$ and $j\geq J_0$. Precisely, we can choose $T_0=(\kappa-\kappa_0)R_0/(2 C_\kappa)$. }

Let $0<\kappa_0<\kappa$ and consider the Euclidean distance between $\B_{\rE}(x_0,\kappa_0 R_0)$ and $\R^\n\sm B_\rE(x_0,\kappa R_0)$, that is $(\kappa-\kappa_0)R_0$.
Let $h\in L^\infty([0,T],\R^\m)$ be such that $\|h\|_\infty \le 1$, 
\[
0<T\le (\kappa-\kappa_0)R_0/(2 C_\kappa)
\]
and take any curve $\gamma_{h,x}^{j,q}$.

We notice that whenever $t\ge0$ is sufficiently small,
such that $\gamma_{h,x}^{j,q}(t)\in \B_\rE(x_0,\kappa R_0)$, using \eqref{eq:C_Kdjq} for all $j\ge J_0$, 
it follows that 
\[
|\gamma_{h,x}^{j,q}(t)-x|\le C_\kappa\, d_q^j\big(\gamma_{h,x}^{j,q}(t),x\big)
\le C_\kappa t \le \frac{(\kappa-\kappa_0)R_0}2.
\]
Thus, the curve $\gamma^{j,q}_{h,x}$ can be extended to all the interval $[0,T]$ and 
\[
\gamma_{h,x}^{j,q}([0,T])\subset \B_\rE(x_0,(\kappa+\kappa_0)R_0/2)
\]
for $j\ge J_0$ and $q\in\Xi$. 
The same inclusion and the analogous estimates hold for the curves $\gamma_{h,x}^q$.
The proof of Step 1 is concluded by setting $T_0=(\kappa-\kappa_0)R_0/(2 C_\kappa)$.

The following step is a suitable version of 
\cite[Lemma 3.3]{DonVittone2019Comp}, again
adapted to our setting.

 {\bf Step 2.} {\it For every $\kappa_0\in (0,\kappa)$ and $\ep>0$, there exists $J_1=J_1(\ep, \kappa, \kappa_0)\in \N$ such that 
\beq\label{eq:stimejepsilon}
	|\gamma_{h,x}^{j,q}(t)-\gamma^q_{h,x}(t)|\leq \ep
\eeq
	for every $j\ge J_1$, $x\in \B_\rE(x_0,\kappa_0R_0)$, $h\in L^\infty([0,T_0],\R^\m)$ with $\|h\|_\infty\le 1$ and $t\in[0,T_0]$.
The number $T_0=(\kappa-\kappa_0)R_0/(2C_\kappa)$ appeared in the proof of Step 1.}

Take $\ep>0$ and consider $J_1=J_1(\ep, \kappa, \kappa_0)\in \N$ such that $J_1\geq J_0$ and
\[
T_0\left(\sum_{i=1}^\m\sup_{x\in \B_\rE(0,\kappa R_0)}|X_i^{j,q}-X_i^q|\right)e^{\m LT_0}\leq \ep
\]
for every $j\geq J_1$, where $L>0$ is an upper bound for the Lipschitz constants of $X_i^{j,q}$ and of $X_i^q$ on $\B_\rE(x_0,\kappa R_0)$, which is uniform as $j\in \N$, $q\in \Xi$, $i=1,\ldots,\m$ and $T_0=(\kappa-\kappa_0)R_0/(2C_\kappa)$.
The existence of $L>0$ with the previous properties follows from the 
convergence of $X_i^{j,q}$ to $X_i^q$ in the $C^\infty_{\rm loc}$-topology,
which is also uniform with respect to $q\in \Xi$. This proves the existence of $J_1$ with 
the above properties.
Since we know that for $0<T\leq T_0$, the curves $\gamma^{j,q}_{h,x}([0,T]),\gamma^{q}_{h,x}([0,T])$ are contained in $\B(x_0,\kappa R_0)$, arguing as in the proof of 
\cite[Lemma~ 3.3]{DonVittone2019Comp}, for $0\le t\le T$, we obtain
\[
|\gamma_{h,x}^{j,q}(t)-\gamma^q_{h,x}(t)|
\le T \pa{\sum_{i=1}^\m \max_{\B_\rE(x_0,\kappa R_0)} |X_i^{j,q}-X_i^q|} e^{\m LT}\le \ep
\]
for all $t\in[0,T]$,  with $0<T\le T_0=R_0(\kappa-\kappa_0)/(2C_\kappa)$ and any $j\ge J_1$ and $x\in\B_\rE(x_0,\kappa_0R_0)$.

{\bf Step 3.} {\it There exists $\kappa_1\in (0,\kappa)$ such that for any $j\geq J_0$ and $q\in \Xi$, all sub-Riemannian geodesics with respect to either the distance $d_q^j$ or $d_q$ connecting points of $\B_\rE(x_0,\kappa_1 R_0)$ are entirely contained in $B_\rE(x_0,\kappa R_0)$.}

By the estimates of \eqref{eq:C_Kdjq}, we first observe that
\beq\label{eq:Pedj}
\max\set{d_q^j(x,y),d_q(x,y)}\leq  C_\kappa (2\kappa R_0)^{1/s}
\eeq
for $x,y\in \B_\rE(x_0,\kappa R_0)$,
 $q\in \Xi$ and $j\ge J_0$. 
For any $\ep>0$ arbitrarily fixed, we consider any absolutely continuous curve $\gamma\colon[0,T]\to B_\rE(x_0,R_0)$ such that
$\gamma(0)=x$, $\gamma(T)=y$, where
either 
\[
\dot\gamma(t)=\sum_{i=1}^{\m} h_i(t) X_i^{j,q}(\gamma(t))\quad\text{or} \quad \dot\gamma(t)=\sum_{i=1}^{\m} h_i(t) X_i^{q}(\gamma(t)),
\]
with $\|h\|_\infty \le 1$ and 
\[
0<T< C_\kappa (2\kappa_1R_0)^{1/s}+\ep.
\]
Notice that by definition of SR distance on $B_\rE(x_0,R_0)$ and the estimates \eqref{eq:Pedj} such curves always exist. 
We apply Step 1 with $\kappa_1$ in place of $\kappa_0$.
Thus, if we knew that for $0<\kappa_1<\kappa$ 
the inequality $T\le T_0= (\kappa-\kappa_1)R_0/(2C_\kappa)$ holds,
then we would have
\beq\label{eq:inclusionB}
\gamma([0,T])\subset \B_\rE(x_0,\kappa R_0).
\eeq
The previous estimate on $T$ actually holds taking $\kappa_1>0$ such that
\beq\label{eq:Ckappa1}
 C_\kappa (2\kappa_1R_0)^{1/s}+\ep\le (\kappa-\kappa_1)R_0/(2C_\kappa)
\eeq
for an arbitrary $0<\ep <\kappa R_0/(2C_\kappa)$.
We fix $\ep =\kappa R_0/(4C_\kappa)$, so that estimate \eqref{eq:Ckappa1} is satisfied whenever
\beq\label{eq:stimakappa1}
\max\set{C_\kappa 2^{1/s},\frac1{2C_\kappa}}\big((\kappa_1R_0)^{1/s}+\kappa_1R_0\big)\le \frac{\kappa R_0}{4C_\kappa}
\eeq
In particular, by the standard Ascoli--Arzelà compactness argument,
we have proved that every SR geodesic 
with respect to either the frame $\bX^{j,q}_h$
or $\bX^q_h$, with $j\ge J_0$, $q\in \Xi$ and connecting 
$x$ and $y\in \B_\rE(x_0,\kappa_1R_0)$ is entirely contained in 
$\B_\rE(x_0,\kappa R_0)$.

 {\bf Step 4.} {\it Given $\kappa_1\in (0,\kappa)$ as in \eqref{eq:stimakappa1} and $\ep >0$, there exists $J_2=J_2(\ep,\kappa, \kappa_1)\in \N$ such that
\[
d_q^j(x,y)\leq d_q(x,y)+\ep,
\]
for every $j\geq J_2$, $x,y\in\B_\rE(x_0,\kappa_1R_0)$ and $q\in \Xi$.}

We fix $\kappa_1$ satisfying \eqref{eq:stimakappa1} and $\ep>0$, with  $x,y\in\B_\rE(x_0,\kappa_1R_0)$. 
By the previous step, we may consider
a SR geodesic $\gamma_{\bar h,x}^q\colon[0,d_q(x,y)]\to \B_\rE(x_0,\kappa R_0)$ 
connecting $x$ and $y$, for some $\bar h\in L^\infty([0,d_q(x,y)],\R^\m)$,
$\|\bar h\|_\infty=1$ and $q\in \Xi$.
We consider $\gamma_{\bar h,x}^{j,q}\colon[0,d_q(x,y)]\to \B_\rE(x_0,\kappa R_0)$ as in
\eqref{eq:Cauchykappa0}, with $h=\bar h$. 
These curves are all defined on the interval 
$[0,d_q(x,y)]$, due to \eqref{eq:Pedj} and \eqref{eq:Ckappa1} combined with Step~1.
We set $y_q^j=\gamma_{\bar h,x}^{j,q}(d_q(x,y))$ 
and apply \eqref{eq:stimejepsilon} for $j\ge J_2\coloneqq J_1((\ep/C_\kappa)^s,\kappa, \kappa_1)$, getting 
\[
|y_q^j-y|=|\gamma_{\bar h,x}^{j,q}(d_q(x,y))-\gamma_{\bar h,x}^q(d_q(x,y))|\le \pa{\frac{\ep}{C_\kappa}}^s.
\]
Due to \eqref{eq:C_Kdjq}, it follows that 
$d_q^j(y_q^j,y)\le \ep$, therefore
\beq\label{eq:dyjhx}
d_q^j(x,y)\le d_q^j(x,y_q^j)+d_q^j(y_q^j,y)\le d_q(x,y)+\ep
\eeq
for every $j\ge J_2$,
$x,y\in\B_\rE(x_0,\kappa_1R_0)$
and $q\in \Xi$. The inequality $d_q^j(x,y_q^j)\le d_q(x,y)$ is an obvious consequence of the definition of SR distance.

{\bf Step 5.} {\it Given $\kappa_1$ as in \eqref{eq:stimakappa1} and $\ep>0$ with 
$\Xi=\{p\}$, then there exists $J_3=J_3(\ep, \kappa, \kappa_1)\in\N$ and a subsequence $j_\ell$ such that for every $x,y\in \B_\rE(x_0,\kappa_1R_0)$, $j_\ell\geq J_3$ we have
\beq\label{ineq:d^q}
d_p(x,y)\leq d_p^{j_\ell}(x,y)+\varepsilon.
\eeq
}
Let us consider two arbitrary points $x,y\in\B_\rE(x_0,\kappa_1R_0)$. Step 3 of the proof shows that all geodesics 
$\gamma_{h^j,x}^{j,p}$ with respect to $\bX^{j,p}_h$ 
connecting $x$ and $y$ are contained in 
$\B_\rE(x_0,\kappa R_0)$.
Up to a rescaling, we may assume that 
$\gamma_{h^j,x}^{j,p}\colon[0,1]\to \B_\rE(x_0,\kappa R_0)$, $\gamma^{j,p}_{h^j,x}(1)=y$ and
\[
\|h^j\|_\infty=d_p^j(x,y)\le C_\kappa(2\kappa_1R_0)^{1/s}.
\]
By weak$^*$ compactness, there exists a subsequence of $h^j$ that converges to some $\bar h\in L^\infty([0,1],\R^\m)$
in the weak$^*$ topology. We may relabel $h^j$, so that  $h^j\stackrel{\!\!*}{\rightharpoonup}\bar h$. The uniform boundedness of $\gamma^{j,p}_{h^j,x}$ combined with the uniform convergence of $X^{j,p}_i$
to $X^p_i$, $i=1,\dots, \m$, implies their equi-Lipschitz continuity.
Thus, up to further relabeling $h^j$, by Ascoli--Arzelà compactness theorem, we also get the uniform convergence of $\gamma_{h^j,x}^{j,p}$ to some $\Gamma\colon[0,1]\to\B_\rE(x_0,\kappa R_0)$.
Actually, these conditions allow us to pass
to the limit in the following integral 
equation
\[
\gamma_{h^j,x}^{j,p}(t)=x+\int_0^t\sum_{i=1}^\m h^j_i(s)X^{j,p}_i(\gamma_{h^j,x}^{j,p}(s)) ds
\]
as $j\to\infty$, so that $\Gamma=\gamma_{\bar h,x}^p$ satisfies the second ODE of \eqref{eq:Cauchykappa0}. Moreover $\gamma_{\bar h,x}^p(1)=y$, therefore 
\[
d_p(x,y)\le \|\bar h\|_\infty \le\liminf_{j\to \infty} \|h^j\|_\infty
=\liminf_{j\to \infty} d_p^j(x,y)
\]
where $d_p^j(x,y)$ is also relabeled,
according to the subsequence of $h^j$.
Up to extracting a further subsequence,
we have actually proved the existence of $\tilde J_3=\tilde J_3(\ep,x,y, \kappa, \kappa_1)\in\N$ such that
\beq\label{eq:Jxy}
d_p(x,y)\le d_p^j(x,y)+\ep
\eeq
for all $j\ge \tilde J_3$.

By compactness, let $x_1,\dots, x_k\in \B_\rE (x_0,\kappa_1R_0)$ such that $\B_\rE(x_0,\kappa_1R_0)\subset \bigcup_{i=1}^kB_p(x_i,\varepsilon)$.
We use the notation $B_p$ and $B_p^j$ to denote the open metric balls with respect to $d_p$ and $d_p^j$, respectively. Thanks to \eqref{eq:C_Kdjq} and \eqref{eq:Jxy}  we can find $J_3=J_3(\ep, \kappa,\kappa_1,x_1,\ldots,x_k)$ such that 
\beq\label{eq:stimecentri}
\begin{aligned}
	& B_p(x_i,\ep)\subset B_p^j(x_i,C_\kappa^{1+1/s}\ep^{1/s})\quad &&i=1,\dots, k,\\
	& d_p(x_i,x_\ell)\le d_p^j(x_i,x_\ell) +\ep\quad && i,\ell=1,\dots, k,
\end{aligned}
\eeq
for every $j\ge J_3$. 
There exist $i_1,i_2$ such that $x\in B_(x_{i_1},\ep)$ and $y\in B_p(x_{i_2},\ep)$. Thanks to \eqref{eq:stimecentri}, by triangle inequality, for $j\geq J_3$ we have
\[
\begin{aligned}
d_p(x,y)&\le d_p(x,x_{i_1})+d_p(x_{i_1}, x_{i_2})+d_p(x_{i_2},y)\\
&\leq \ep +d_p^j(x_{i_1},x_{i_2})+2\ep\\
&\leq d_p^j(x_{i_1},x)+d_p^j(x,y)+d_p^j(y,x_{i_2})+3\ep\\
& \leq d_p^j(x,y)+2C_\kappa^{1+1/s}\ep^{1/s}+3\ep,
\end{aligned}
\]
which implies the desired inequality for the case $\Xi=\{p\}$. 
 
 {\bf Step 6.} {\it Theorem~\ref{th:DVGen} holds with $\Xi=\set{p}$.}
 
Combining Step 4, with $\Xi=\set{p}$, and Step 5, from the arbitrary choice of $\ep>0$, we obtain a subsequence $d_p^{j_\ell}$ such that 
\[
\lim_{\ell\to\infty}\max_{x,y\in \B_\rE(x_0,r_0)} |d_p^{j_\ell}(x,y)-d_p(x,y)|=0,
\]
with $r_0=\kappa_1R_0$. Since the limit is independent on the choice of the subsequence, our claim immediately follows.

{\bf Step 7.} {\it Given $\kappa_1$ as in \eqref{eq:stimakappa1} and $\ep>0$, there exists $J_3=J_3(\ep, \kappa, \kappa_1)\in\N$ such that
	\beq\label{ineq:d^q7}
	d_q(x,y)\leq d_q^j(x,y)+\varepsilon
	\eeq
	for every $x,y\in \B_\rE(x_0,\kappa_1R_0)$, $j\geq J_3$, $q\in \Xi$.
}

Assume by contradiction that there exist $\varepsilon_0>0$, some sequences $x_j, y_j$ in $\B_\rE(x_0,\kappa_1R_0)$ and $q_j$ in $\Xi$ such that
\begin{equation}\label{eq:assurdo}
	d_{q_j}(x_j,y_j)> d_{q_j}^{k_j}(x_j,y_j)+\varepsilon_0
\end{equation}
for every $j\in\N$, where $k_j\to\infty$.
By compactness, up to extracting subsequences, we can find $x_0,y_0 \in \B_\rE(x_0,\kappa_1R_0)$ and $q_0\in \Xi$ such that $x_j\to x_0$, $y_j\to y_0$ and $q_j\to q_0$ as $j\to \infty$, where we have relabeled the indices of the sequences and of $k_j$. 
Our assumptions on the convergence of the family $X_j^{q}$ imply that the sequences $X^{q_j}_1,\ldots,X^{q_j}_\m$ and $X^{k_j,q_j}_1,\ldots,X^{k_j,q_j}_\m$
both converge to $X_1^{q_0},\ldots,X^{q_0}_\m$ in the 
$C^\infty_{\rm loc}$-topology.
For the convergence of $X^{q_j}_1,\ldots,X^{q_j}_\m$, 
it is enough to use the continuity of all partial derivatives 
$(x,q)\mapsto \partial_x^\alpha X^q_i(x)$.
For the convergence of $X^{k_j,q_j}_1,\ldots,X^{k_j,q_j}_\m$,
we use the triangle inequality and the fact that $X_i^{j,q}\to X_i^q$ as $j\to \infty$ in the $C^\infty_{\rm loc}$-topology, uniformly with respect to $q\in U$, for any $i=1,\dots, \m$.

We apply Step 6 to both the sequences $\bX^{q_j}_h$ and $\bX^{k_j,q_j}_h$, hence the associated distances $d_{q_j}$ and $d_{q_j}^{k_j}$ uniformly converge to $d_{q_0}$ in $\B_\rE(x_0,\kappa_1R_0)\times\B_\rE(x_0,\kappa_1R_0)$. 
Thus, both $d_{q_j}(x_j,y_j)$ and $d_{q_j}^{k_j}(x_j,y_j)$ converge to $d_{q_0}(x_0,y_0)$ contradicting \eqref{eq:assurdo} and completing the proof of Step 7. Setting $r_0=\kappa_1R_0$, the proof is complete.
\end{proof}

\begin{rmk}
The previous theorem in some respects can be seen as a uniform version of \cite[Theorem~3.4]{DonVittone2019Comp} with respect to an ``extra parameter'' $q$ varying in an abstract compact metric space. Another difference is that we consider vector fields which are only defined on an open bounded set.
\end{rmk}

\begin{rmk}
	We notice that the possible values for $r_0>0$ in the statement of Theorem~\ref{th:DVGen} can be seen somehow more explicitly starting from the condition \eqref{eq:stimakappa1}. In fact, we obtain
	\[
	(\kappa_1R_0)^{1/s}+\kappa_1R_0\leq \frac{\kappa R_0}{4C_k\max\left\{C_\kappa 2^{1/s}, \frac 1{2C_\kappa}\right\}}=\frac{\kappa R_0}{4C_\kappa^2 2^{1/s} },
	\]
	taking into account the assumption $C_\kappa\geq 1$.
	Defining $\psi\colon [0,+\infty)\to [0,+\infty)$, $\psi(t)=t^{1/s}+t$, we can choose
	\[
	r_0=\psi^{-1}\left(\frac{\kappa R_0}{4C_\kappa^2 2^{1/s} }\right).
	\]
	The point is however that  $C_\kappa$ may depend on $R_0$, hence we de not know the behavior of the ratio in the previous expression as $R_0\to+\infty$. 
	The same formula could be useful only in those cases where more information on the behavior of $C_\kappa$ is available.
\end{rmk}


\begin{rmk}
Let us point out that the distances $d^j_q$ on $B_\rE(x_0,R_0)$ in the assumptions of Theorem~\ref{th:DVGen} may be a priori unbounded. Indeed, the norm of the vector fields of $\bX^{j,q}_h$ may tend to zero when it is evaluated at points that tend to the boundary. 
\end{rmk}

\begin{rmk}
To simplify the exposition, the previous theorem is stated with the
local $C^\infty$ convergence of the frames $\bX^{j,q}_h$. 
Since we may assume that the step of the frame is uniformly bounded,
a $C^k_{\rm loc}$ convergence with $k$ sufficiently large would suffice.
Such convergence is necessary to have the uniform estimates \eqref{eq:C_Kdjq}.
\end{rmk}

The main application of Theorem~\ref{th:DVGen} is the uniform nilpotent approximation of an equiregular sub-Riemannian manifold, as stated in the next theorem.

\begin{theorem}[Uniform nilpotent approximation]\label{thm:unifblowup}
Let $(M,\mathcal D, g)$ be a sub-Riemannian manifold and let $p\in M$ be a regular point.
We consider a privileged orthonormal frame $\bX=(X_1,\dots, X_\n)$ in an open neighborhood $W$ of $p\in M$ 
and denote by $F_\bX\colon U\times V\to W$ a system of uniform exponential coordinates of the first kind relative to $\bX$ with $p\in U$, 
see Definition~\ref{def:unifexpcoord} and \eqref{eqdef:exponentialcoordinates}. 
We set $\widetilde X_i^q= (F_\bX(q,\cdot)^{-1})_\ast X_i$
and introduce the rescaled vector fields $\widetilde X_i^{q,r}=r^{w_i}(\delta_{1/r})_*\widetilde X_i^q$ for $i=1,\ldots,\n$, $q\in U$ and $r>0$.
Then for every bounded open set $A\subset\R^\n$ the following statements hold.
\begin{enumerate}
	\item 
	The frame $\widetilde \bX^{q,r}=(\widetilde X^{q,r}_1,\ldots,\widetilde X^{q,r}_\n)$ converges to $\widehat \bX^q=(\widehat X^q_1,\ldots,\widehat X^q_\n)$ on the subset $A\subset\R^n$ in the $C^\infty_{\rm loc}$-topology as $r\to0$, uniformly with respect to $q$ varying in any compact set of $U$.
	\item 
	The local distance $\widetilde d_q^r$ induced by the frame $\widetilde \bX^{q,r}_h=(\widetilde X^{q,r}_1,\ldots,\widetilde X^{q,r}_\m)$ converges to $\widehat d_q$ in $L^\infty(A\times A)$ as $r\to 0$, uniformly as $q$ varies in any compact set of $U$,
	where $\widehat d_q\colon \R^n\times\R^\n\to[0,+\infty)$ is the tangent sub-Riemannian distance (Definition~\ref{def:nilpApprox}).
\end{enumerate}
\end{theorem}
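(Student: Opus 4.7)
The plan is to treat the two parts in order: first prove part (1) by a direct Taylor expansion argument on the coefficient functions $a^q_{ij}$, exploiting the joint smoothness in $(q,x)$ afforded by the uniform exponential coordinates. Then I will combine part (1) with Theorem~\ref{th:DVGen} to deduce part (2), and finally extend the conclusion from a small Euclidean ball to any bounded open set $A$ via a scaling argument based on the 1-homogeneity of $\widehat d_q$.

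For part (1), write $\widetilde X^q_i=\sum_j a^q_{ij}(x)\,\partial_j$ on $V$. Since the uniform exponential map $F_\bX\colon U\times V\to W$ is smooth, the coefficient $(q,x)\mapsto a^q_{ij}(x)$ is smooth jointly in both variables. A direct change of variables using \eqref{df:tangdilations} gives
\[
\widetilde X^{q,r}_i(x)=\sum_{j=1}^{\n}\frac{a^q_{ij}(\delta_r x)}{r^{w_j-w_i}}\,\partial_j.
\]
By Remark~\ref{rmk:nilpotentapp}, the polynomial $b^q_{ij}$ is the $\delta$-homogeneous component of degree $w_j-w_i$ in the Taylor expansion of $a^q_{ij}$ at the origin, so its coefficients are iterated partial derivatives of $a^q_{ij}$ at $0$, which depend smoothly on $q$. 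Writing the quantitative Taylor remainder for each $x$-derivative of $a^q_{ij}$, the difference $\bigl(a^q_{ij}(\delta_r x)-b^q_{ij}(\delta_r x)\bigr)/r^{w_j-w_i}$ together with all its $x$-derivatives of arbitrary order tends to zero as $r\to 0$, uniformly in $x$ on any bounded subset of $\R^\n$ and uniformly in $q$ on any compact subset of $U$. This establishes (1).

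For part (2), I apply Theorem~\ref{th:DVGen} with $\Xi$ equal to an arbitrary compact subset $K\subset U$, ambient ball $B_\rE(0,R_0)$ chosen large enough, approximating sequence $\bX^{j,q}_h:=\widetilde\bX^{q,r_j}_h$ for any fixed sequence $r_j\to 0$, and limit $\bX^q_h:=\widehat\bX^q_h$. The joint continuity and uniform $C^\infty_{\rm loc}$-convergence hypotheses of Theorem~\ref{th:DVGen} are precisely what part (1) provides. Chow's condition holds uniformly: the limit $\widehat\bX^q_h$ generates a stratified Lie algebra of fixed step $s$ at every $q$, while the rescaled frame $\widetilde\bX^{q,r}_h$ is the pullback of a bracket-generating frame of step $s$ through the diffeomorphism $F_\bX(q,\cdot)\circ\delta_r$. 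Theorem~\ref{th:DVGen} then yields uniform convergence of the local SR distances $\widetilde d_q^{r_j}$ to $\widehat d_q$ on some Euclidean ball $B_\rE(0,r_0)$, uniformly in $q\in K$. To upgrade to an arbitrary bounded open $A\subset\R^\n$, I use the scaling identities
\[
\widetilde d_q^r(x,y)=\frac{1}{\mu}\,\widetilde d_q^{r/\mu}(\delta_\mu x,\delta_\mu y),\qquad \widehat d_q(x,y)=\frac{1}{\mu}\,\widehat d_q(\delta_\mu x,\delta_\mu y),
\]
valid for every $\mu>0$: the first follows by a change of variables in the defining ODEs of horizontal curves together with the definition of $\widetilde X^{q,r}_i$, the second from the 1-homogeneity of $\widehat d_q$ under $\delta_\mu$. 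Choosing $\mu>0$ small enough that $\delta_\mu(A)\subset B_\rE(0,r_0)$, which is possible because all weights are $\geq 1$ so $\delta_\mu$ contracts Euclidean norm for $\mu<1$, the uniform convergence on $B_\rE(0,r_0)$ with parameter $r/\mu\to 0$ transfers to uniform convergence on $A$ with parameter $r\to 0$.

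The main obstacle I anticipate is the uniform decay, jointly in $q$ and in all $x$-derivatives, of the Taylor remainders $\rho^q_{ij}$ on compact sets of $U$: this is what genuinely promotes the pointwise convergence in Theorem~\ref{th:tangentspace} to the uniform one, and is what allows Theorem~\ref{th:DVGen} to be applied cleanly in (2). A minor additional subtlety is that the domain of definition of $\widetilde\bX^{q,r}$ grows as $r\to 0$, so one must check that for $r$ sufficiently small (depending only on $A$ and $K$) this domain contains $A$; this is straightforward since $V$ is a fixed open neighborhood of $0$ and $\delta_r$ contracts.
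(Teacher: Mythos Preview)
Your approach is essentially the paper's: both prove (1) by a uniform Taylor expansion of the coefficients $a^q_{ij}$ exploiting the joint smoothness in $(q,x)$ coming from the uniform exponential map, and both deduce (2) by feeding (1) into Theorem~\ref{th:DVGen} and then using a rescaling argument to pass from a small Euclidean ball to an arbitrary bounded $A$.

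There is, however, one step in part (2) that you gloss over and the paper does not. Theorem~\ref{th:DVGen} gives convergence of $\widetilde d_q^{r_j}$ to the sub-Riemannian distance associated with $\widehat\bX^q_h$ \emph{on the bounded domain $B_\rE(0,R_0)$}, not to the global tangent distance $\widehat d_q$ defined on all of $\R^\n$. These two distances can a priori differ on $B_\rE(0,r_0)$, since a $\widehat d_q$-geodesic between two points of $B_\rE(0,r_0)$ might leave $B_\rE(0,R_0)$. Your phrase ``$R_0$ chosen large enough'' does not close this, because $r_0$ is produced by the theorem and is not under your control; and your scaling identities at the end, while correct for $\widehat d_q$ and for $\widetilde d^r_q$, do not by themselves identify the local limit distance with the global one. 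The paper handles this explicitly: it works on the SR ball $\widehat B_q(0,4)$, shows via a Proposition~\ref{prop:localdist}-style argument that the local distance on $\widehat B_q(0,4)$ coincides with $\widehat d_q$ on $\widehat B_q(0,1)\times\widehat B_q(0,1)$, and only then chooses $R_0<1$ with $B_\rE(0,R_0)\subset\widehat B_q(0,1)$ (uniformly in $q\in K$) before invoking Theorem~\ref{th:DVGen}. You should insert the analogous verification.
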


\begin{proof}
We notice that the uniform coordinates $F_\bX$ exist, since $p$ is a regular point. For any $q\in U$, we set 
$$
\widetilde X_i^q= \sum_{j=1}^\n a_{ij}^q\,\partial_j
$$
for suitable smooth functions $a^q_{ij}\in C^\infty(V)$. We may apply Theorem~\ref{th:tangentspace} with respect to the
coordinates $F_\bX(q,\cdot)\colon V\to W$  for every $q\in U$. Then there exist 
smooth functions $\rho_{ij}^q\in C^\infty(V)$ and polynomials $b_{ij}^q\in C^\infty(V)$ such that 
$$
a_{ij}^q=b_{ij}^q+\rho_{ij}^q
$$
and the statements (a), (b) and (c) of Theorem~\ref{th:tangentspace} hold.
The homogeneity of the polynomials $b^q_{ij}$ implies that the vector fields $\widehat{X}^q_i=\sum_{j=1}^\n b^q_{ij}\partial_j$ are homogeneous, precisely
\[
(\delta_r)_*(\widehat{X}^q_i)=r^{w_i}\widehat{X}^q_i
\]
for every $i=1,\ldots,\n$ and $q\in U$.
As already observed in Remark~\ref{rmk:nilpotentapp}, the coefficients of the polynomial $b_{ij}^q$ are suitable iterated derivatives $a_{ij}^q$ at 0, with respect to our exponential coordinates of the first kind $x$ centered at $q$.
Since the uniform coordinates $F_\bX$ are smooth, then so are $(q,x)\mapsto a_{ij}^q(x)$, $(q,x)\mapsto b_{ij}^q(x)$ and $(q,x)\mapsto \rho_{ij}^q(x)$.
 
Let $K\subset U$ be compact and consider $r_1>0$ sufficiently small, such that such for
all $i=1,\ldots,\n$ and $0<r\le r_1$ the rescaled vector field $\widetilde X^{q,r}_i=r^{w_i}(\delta_{1/r})_*\widetilde X_i^q$ is well
defined on $A$ for every $q\in U$.
We wish to prove that $\widetilde X^{q,r}_i$ converges to $\widehat X_i^q$ in $B_\rE(0,R_0)$ with respect to the $C^\infty_{\rm loc}$-topology as $r\to 0$, uniformly with respect to $q\in K$ and for every $i=1,\ldots,\n$. 
The above homogeneity of $\widehat{X}^q_i$ implies that 
\[
r^{w_i}(\delta_{1/r})_*\widetilde X_i^q=\widehat X_i^q+\sum_{j=1}^\n r^{w_i-w_j}\rho_{ij}^q(\delta_r x)\partial_j
\]
for all $i=1,\ldots,\n$. The claim (c) of Theorem~\ref{th:tangentspace} states that the monomials in the Taylor expansion of $\rho_{ij}^q$ have homogeneous degree greater than $w_j-w_i$.
Then the smooth dependence of $\rho^q_{ij}(x)$ with respect to the couple $(q,x)$ implies that for any $\alpha\in \mathbb N^\n$, we have
\[
	\partial^{\alpha}_x\left[r^{w_i-w_j}\left(\rho_{ij}^q(\delta_r x)\right)\right]=O_{\alpha,i,j}(r),
\]
where $x\in A$ and $O_{\alpha,i,j}(r)$ is uniform with respect to $q\in K$. 
We have proved the first statement of our claim.

To prove the second statement, we fix again a compact set $K\subset U$. By the first claim with $A=\widehat B(0,4)$, we have that $r(\delta_{1/r})_*\widetilde X_i^q$ converges to $\widehat X_i^q$ on $\widehat B(0,4)$ with respect to the topology of $C^\infty_{\rm loc}$ as $r\to0$, for every $i=1,\ldots,\m$ and uniformly with respect to $q\in K$.
We denote by $\widehat d_{q,0}$ the distance induced by the orthonormal frame $\widehat \bX^q_h=(\widehat X_1^q,\ldots,\widehat X_\m^q)$ on $\widehat B_q(0,4)$, that may differ from the distance $\widehat d_q$ of the nilpotent approximation.
Let us recall that $\widehat B_q(0,4)$ is defined by $\widehat d_q$.
Obviously $\widehat d_{q,0}|_{\widehat B_q(0,1)}\ge {\widehat d_q}{|_{\widehat B_q(0,1)}}$,
hence we choose $x,y\in \widehat B_q(0,1)$.
Arguing as in the proof of Proposition~\ref{prop:localdist},
we may select a horizontal curve 
$\gamma\colon [0,1]\to \R^n$ connecting $x$ and $y$,
such that ${\rm length}(\gamma)<\widehat d_q(x,y)+\ep$ 
and $0<\ep<1$. We obtain
\[
\widehat d_q(\gamma(t),0)\leq \widehat d_q(\gamma(t),x)+\widehat d_q(x,0)< {\rm length}(\gamma)+1< 3+\varepsilon<4,
\]
therefore $\gamma(t)\in \widehat B_q(0,4)\subset $ for all $t\in[0,1]$.
In particular, we have 
\[
\widehat d_{q,0}(x,y)\le {\rm length}(\gamma)<\widehat d_q(x,y)+\ep,
\]
that immediately leads to the opposite inequality.
We have proved that 
\beq\label{eq:eqdistances&ind}
\widehat d_{q,0}|_{\widehat B_q(0,1)\times \widehat B_q(0,1)}=\widehat d_q|_{\widehat B_q(0,1)\times \widehat B_q(0,1)}.
\eeq
We apply Theorem~\ref{th:DVGen} to the converging frame $\widetilde \bX^{q,r}_h=(\widetilde X^{q,r}_1,\ldots,\widetilde X^{q,r}_\m)$
with some $0<R_0<1$ such that $B_\rE(0,R_0)\subset \widehat B_q(0,1)$, hence we get $0<r_0<R_0<1$ such that 
\beq\label{eq:unifr_0}
\sup_{q\in K}\sup_{x,y\in B_\rE(0,r_0)}|\widetilde d_q^r(x,y)-\widehat{d}_q(x,y)|=\sup_{q\in K}\sup_{x,y\in B_\rE(0,r_0)}|\widetilde d_q^r(x,y)-\widehat{d}_{q,0}(x,y)|\to0
\eeq
as $r\to0$, due to \eqref{eq:eqdistances&ind}. 
The previous limit and a standard rescaling argument conclude the proof.
\end{proof}

\section{Diameters of sub-Riemannian balls around regular points}\label{sec:diameters}

In this section, we prove local uniform estimates for the diameter of sub-Riemannian metric balls, which are centered at regular points. 

An important tool is the next theorem, which establishes the existence of a ``local uniform radius'' for all metric balls which are contained in the image of a suitable ``uniform topological exponential-type mapping''. 

\begin{theorem}[Topological existence of uniform radius]\label{thm:TopolUniformradius}
	Let $M$ be a length metric space and let $p\in M$. Let $T\subset M$ be an open neighborhood of $p$
	and let $A\subset \R^\n$ be an open neighborhood of $0$. 
	We consider a mapping 
	$E\colon T\times A\to M$ such that
	\begin{enumerate}
		\item\label{it1cont}
		$E$ is continuous,
		\item\label{it2Eq0}
		$E(q,0)=q$ for every $q\in T$,
		\item\label{it3hom}
		the mapping $E(q,\cdot)\colon A\to E(q,A)$ is a homeomorphism
		for every $q\in T$.
	\end{enumerate}
	Then there exist a bounded open neighborhood $V\subset A$ of $0$, an open neighborhood $U\subset T$ of $p$ such that the function
	\begin{equation}\label{eq:r_qpositive}
		U\ni q\mapsto R(q)=\sup\{t>0: B(q,t)\subset E(q,V) \}\in(0,+\infty)
	\end{equation}
	is well defined and lower semicontinuous.
	In particular, there exist $r_0>0$ and $\ep_0>0$ such that 
	$B(q,r_0)\subset E(q,V)$ for every $q\in B(p,\ep_0)$.
\end{theorem}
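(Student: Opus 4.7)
My strategy will be to reduce the problem to a Euclidean picture via a chart around $p$ (which is available since, in the intended applications, $M$ is a smooth manifold of dimension $\n$) and then exploit Brouwer degree theory, trading on the injectivity of $E(q,\cdot)$. I would first fix a homeomorphism $\varphi\colon W\to\R^\n$ on an open neighborhood $W\subset T$ of $p$, choose a bounded open $V$ with $0\in V$ and $\overline V\subset A$, and shrink $T$ to some $T'\subset W$ so that $\widetilde E:=\varphi\circ E\colon T'\times\overline V\to\varphi(W)$ is well defined. Each $\widetilde E(q,\cdot)$ is then a continuous injection of an open set of $\R^\n$ into $\R^\n$, hence open by Brouwer's invariance of domain, and $\widetilde E(q,0)=\varphi(q)\notin \widetilde E(q,\partial V)$ by injectivity.

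To obtain $R(q)>0$ near $p$, I would introduce $\delta(q):=\dist_{\rm E}(\varphi(q),\widetilde E(q,\partial V))$, which is continuous in $q$ (by Hausdorff-continuity of $q\mapsto \widetilde E(q,\partial V)$, itself a consequence of uniform continuity of $\widetilde E$ on compacts) and positive at $p$. For any $\tilde y\in B_{\rm E}(\varphi(q),\delta(q))$ the straight-line homotopy from $\varphi(q)$ to $\tilde y$ avoids $\widetilde E(q,\partial V)$; homotopy invariance of the Brouwer degree, together with the fact that $\widetilde E(q,\cdot)$ has local degree $\pm 1$ at its unique preimage $0$ of $\varphi(q)$ (by injectivity), forces $\deg(\widetilde E(q,\cdot),V,\tilde y)=\pm 1\neq 0$ and hence $\tilde y\in \widetilde E(q,V)$. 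Pulling back by $\varphi^{-1}$ and using bi-continuity of $\varphi$ together with the length-space structure of $M$ then yields a neighborhood $U\subset T'$ of $p$ on which $R$ is well defined and strictly positive.

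For the lower semicontinuity, take $q_n\to q$ in $U$ and $\alpha<R(q)$. Since $B(q,R(q))\subset E(q,V)$, the closed ball $K_\alpha:=\{y\in M:d(q,y)\leq \alpha\}$ is contained in $E(q,V)$ and, for $\alpha$ small, is compact (by local compactness of $M$) and inside $W$. Its image $\Gamma:=\varphi(K_\alpha)$ is then a compact subset of $\R^\n$ Euclidean-disjoint from the compact $\widetilde E(q,\partial V)$, say with $\dist_{\rm E}(\Gamma,\widetilde E(q,\partial V))=2\sigma>0$. By uniform continuity of $\widetilde E$, for $n$ large the linear homotopy $H_t(x):=t\widetilde E(q,x)+(1-t)\widetilde E(q_n,x)$ keeps $H_t(\partial V)$ at Euclidean distance $\geq \sigma$ from $\Gamma$; invariance of the Brouwer degree under this homotopy transports the value $\pm 1$ from $\widetilde E(q,\cdot)$ to $\widetilde E(q_n,\cdot)$, forcing $\Gamma\subset \widetilde E(q_n,V)$, i.e.\ $K_\alpha\subset E(q_n,V)$. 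The triangle inequality in the intrinsic metric then gives $B(q_n,\alpha-d(q,q_n))\subset E(q_n,V)$, hence $\liminf_n R(q_n)\geq \alpha$; arbitrariness of $\alpha<R(q)$ concludes. The ``in particular'' clause is then immediate from lower semicontinuity and $R(p)>0$.

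The hard part will be the LSC step, where the degree-theoretic homotopy must be made uniform over the whole compact target $\Gamma$, controlling simultaneously the Hausdorff displacement $\widetilde E(q_n,\partial V)\to \widetilde E(q,\partial V)$ and the compatibility between intrinsic balls $B(q,\alpha)\subset M$ and their Euclidean images through $\varphi$; the length-space hypothesis is precisely what ensures that chart neighborhoods and intrinsic balls form compatible bases at small scale, so that the Euclidean conclusion translates into the desired estimate for the intrinsic function $R$.
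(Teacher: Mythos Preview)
Your degree-theoretic strategy is essentially the same as the paper's, but there are two differences worth flagging. First, you introduce an external chart $\varphi$ and explicitly rely on $M$ being a manifold; the theorem as stated assumes only that $M$ is a length metric space. The paper sidesteps this by observing that hypothesis~(3) already hands you a chart: one chooses $U\Subset T$ and $V\Subset A$ with $E(\overline U\times\overline V)\subset E(p,A)$ and then uses $E(p,\cdot)^{-1}$ itself to transfer everything to $\R^\n$. This is the point you are missing; once you replace your $\varphi$ by $E(p,\cdot)^{-1}$, your invariance-of-domain and degree arguments go through without the extra manifold hypothesis.

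Second, the homotopies differ. You interpolate linearly in $\R^\n$ between $\widetilde E(q,\cdot)$ and $\widetilde E(q_n,\cdot)$; the paper instead picks a continuous curve $\gamma\colon[0,1]\to B(q,\varepsilon)$ from $q$ to $q'$ (this is precisely where the length-space hypothesis is used) and sets $H(x,t)=E(p,\cdot)^{-1}\circ E(\gamma(t),x)$. Both homotopies keep $\partial V$ away from the target set and give degree $\pm1$ at the end, so both conclude $B(q,s)\subset E(q',V)$. Your linear homotopy is arguably simpler and makes less use of the length-space assumption; the paper's path homotopy has the conceptual advantage of staying within the family of ``exponential'' maps, which guarantees automatically that the homotopy lands in the image of the chosen chart $E(p,\cdot)$. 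In your write-up the role you assign to the length-space hypothesis (``chart neighborhoods and intrinsic balls form compatible bases'') is vague and not really what drives the argument; tightening this would strengthen your proof.
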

\begin{proof}
	We may choose two open neighborhoods $U\Subset T$ and $V\Subset A$ 
	of $p\in T$ and of $0\in A$, respectively, such that
	\begin{equation}\label{eq:InclEp}
		E(\overline U\times\overline V)\subset E(p,A).
	\end{equation}
	By conditions \eqref{it2Eq0} and \eqref{it3hom} and the fact
	that $V\Subset A$, we have $0<R(q)<+\infty$ for each $q\in U$,
	so that the function \eqref{eq:r_qpositive} is well defined. 
	We now fix a point $q\in U$ and $t>0$ such that $t<R(q)$
	and $B(q,2t)\subset U$.  
	By definition of $R(q)$, we may find $s\in(t,R(q))\cap(t,2t)$ such that
	\begin{equation}\label{eq:inclB(q,s)}
		\B(q,s)\subset U\cap E(q,V).
	\end{equation}
	From condition \eqref{it3hom}, we have 
	$\partial E(q,V)=E(q,\partial V)$, and hence 
	\[ 
	\B(q,s)\cap E(q,\partial V)=\emptyset.
	\]
	Then the triangle inequality implies
	\[
	B(q',t)\subset B(q,s),
	\]
	for every $q'\in B(q,\varepsilon)$ and $\ep\in(0,s-t)$. It remains to prove we can choose $\ep\in(0,s-t)$ sufficiently small such that
	\begin{equation}\label{eq:keyinclusion}
		B(q,s)\subset E(q',V).
	\end{equation}
	As a consequence of this inclusion, for each $q'\in B(q,\ep)$, we obtain
	\[
	B(q',t)\subset E(q',V).
	\]
	Our main tool will be the invariance of the degree of mappings
	under suitable deformations. 
	First of all, we claim that 
	\begin{equation}\label{eq:empty}
		\B(q,s)\cap E(q',\partial V)=\emptyset,
	\end{equation}
	for every $q'\in B(q,\varepsilon)$ and a fixed $\varepsilon>0$, sufficiently small.
	We can find $\sigma_0>0$ such that
	\[
	{\rm dist} (\B(q,s), E(q,\partial V))= {\rm dist }(\B(q,s), \partial E(q, V))=\sigma_0>0.
	\]
	By condition \eqref{it1cont} and the compactness of $\partial V$,
	for every $\delta\in (0,\sigma_0)$ we can find $\varepsilon>0$ such that
	\[
	\sup_{x'\in \partial V} d(E(q,x'),E(q',x'))<\delta<\sigma_0
	\] 
	for every $q'\in B(q,\varepsilon)$. For any $w\in \B(q,s)$ and $v\in \partial V$ we have
	\begin{equation*}
		d(w,E(q',v))\geq |d(w,E(q,v))-d(E(q,v),E(q',v))|\geq {\rm dist} (w, E(q,\partial V))-\delta \geq \sigma_0-\delta>0,
	\end{equation*}
	for every $q'\in B(q,\varepsilon)$. This concludes the proof of \eqref{eq:empty}. 
	Since $M$ is a length metric space,
	we can consider a continuous curve $\gamma\colon [0,1]\to B(q,\varepsilon)$ such that
	\[
	\gamma(0)=q\qquad \text{and}\qquad \gamma(1)=q'.
	\]
	Thus, in view of \eqref{eq:InclEp}, we can define the homotopy $H\colon \overline V\times[0,1]\to \R^\n$ by setting
	\[
	H(x,t)=E(p,\cdot)^{-1}\circ E(\gamma(t), x).
	\]
	By \eqref{eq:empty} we know that 
	\[
	E(\gamma(t),v)=E(p,H(v,t)) \notin B(q,s),
	\]
	for every $v\in \partial V$ and $t\in [0,1]$. 
	As a result, for an arbitrary
	$w\in E(p,\cdot)^{-1}(B(q,s))$ we have 
	\[
	w\neq H(v,t)
	\]
	whenever $v\in\partial V$ and $t\in[0,1]$.  
	Clearly the restriction $H(\cdot,0)$ is injective on $\overline V$.
	The inclusion $\B(q,s)\subset E(q,V)$, that is a consequence
	of \eqref{eq:inclB(q,s)} and the fact that 	
	$w\in E(p,\cdot)^{-1}(B(q,s))$ give $x\in V$ such
	that $w=E(p,\cdot)^{-1}(E(q,x))=H(x,0)$.
	By \cite[Theorem 3.3.3]{Lloyd78DegreeT}, we obtain that 
	\[
	{\rm deg}(H(\cdot,0), V, w)\in\{1,-1\}.
	\]
	We are then in a position to apply \cite[Thorem 2.3 (2)]{FonsecaGangbo95}, hence the function
	\[
	t\mapsto{\rm deg}(H(\cdot,t), V, w)
	\]
	is constant and in particular,
	\[
	{\rm deg}(H(\cdot,1), V, w)\in\set{1,-1}.
	\]
	By \cite[Theorem 2.1]{FonsecaGangbo95}, we have proved 
	that $E(p,w)\in E(q',V)$, hence 
	\[
	B(q,s)\subset E(q',V),
	\]
	that completes the proof.
\end{proof}

The previous result has an important consequence, when applied to our uniform exponential coordinates of the first kind, as explained in the next corollary.

\begin{corollary}[Existence of uniform radius]\label{cor:uniformradius}
	Let $(M,\mathcal D, g)$ be a sub-Riemannian manifold and let $p\in M$ be a regular point. Let $\bX=(X_1,\dots, X_\n)$ be a privileged orthonormal frame in an open neighborhood $W$ of $p\in M$ and consider a system of uniform exponential coordinates of the first kind $F_\bX\colon U\times V\to W$ with $p\in U$, 
	according to Definition~\ref{def:unifexpcoord} and \eqref{eqdef:exponentialcoordinates}. 
	Then there exist a bounded open neighborhood $V_0\subset V$ of $0$, an open neighborhood $U_0\subset U$ of $p$ and a positive $r_0>0$ such that $B(q,r_0)\subset F_\bX(q,V_0)$ for every $q\in U_0$.
\end{corollary}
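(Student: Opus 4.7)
The plan is to deduce this corollary as a direct application of Theorem~\ref{thm:TopolUniformradius}, taking $E = F_\bX$, $T = U$, and $A = V$. There is no real obstacle beyond verifying that the hypotheses of that topological theorem are satisfied in the present sub-Riemannian setting.

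First I would observe that $(M,d)$ is a length metric space. Indeed, by definition the sub-Riemannian distance $d$ on $M$ is the infimum of lengths of horizontal curves, which is precisely the length-space property. In particular, Theorem~\ref{thm:TopolUniformradius} is applicable to $M$ equipped with $d$ and the point $p$.

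Next I would check the three conditions imposed on the mapping $E$ in Theorem~\ref{thm:TopolUniformradius} for the choice $E\colon U\times V\to W$, $E(q,x) = F_\bX(q,x)$. Condition \eqref{it1cont} holds because $F_\bX$ is smooth on $U\times V$ by Definition~\ref{def:unifexpcoord}, hence continuous; in particular it is continuous as a map into $M$. Condition \eqref{it2Eq0} is the identity $F_\bX(q,0) = \exp(0)(q) = q$ for every $q\in U$, which is immediate from the definition \eqref{eqdef:exponentialcoordinates}. Condition \eqref{it3hom} is the statement that $F_\bX(q,\cdot)\colon V\to F_\bX(\{q\}\times V)$ is a $C^\infty$ diffeomorphism for every $q\in U$, and hence in particular a homeomorphism; this is precisely the content of Definition~\ref{def:unifexpcoord}.

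With the hypotheses of Theorem~\ref{thm:TopolUniformradius} verified, I would apply it to obtain a bounded open neighborhood $V_0\subset V$ of $0\in\R^\n$, an open neighborhood $U'\subset U$ of $p$, and positive numbers $r_0,\ep_0>0$ such that
\[
B(q,r_0)\subset F_\bX(q,V_0)\qquad\text{for every } q\in B(p,\ep_0).
\]
Setting $U_0 = U'\cap B(p,\ep_0)$, which is an open neighborhood of $p$ contained in $U$, we obtain the desired inclusion $B(q,r_0)\subset F_\bX(q,V_0)$ for every $q\in U_0$, which is exactly the statement of the corollary.
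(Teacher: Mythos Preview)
Your proof is correct and follows exactly the same approach as the paper, namely a direct application of Theorem~\ref{thm:TopolUniformradius} with $E=F_\bX$; the paper's own proof is in fact briefer, simply asserting that the hypotheses are satisfied without spelling them out. Your explicit verification of the length-space property and of conditions \eqref{it1cont}--\eqref{it3hom} is more detailed but entirely in the same spirit.
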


\begin{proof}
We notice that $F_\bX\colon U\times V\to W$ satisfies the assumptions of Theorem~\ref{thm:TopolUniformradius}. Then there exists $r_0>0$ and $\ep_0>0$ such that 
\[
B(q,r_0)\subset F_\bX(q, V_0)
\]
for every $q\in B(p,\ep_0)$. This concludes the proof.
\end{proof}

\begin{theorem}[Uniform local estimates on diameters]\label{teo:diameter}
Let $p$ be a regular point of a sub-Riemannian manifold $(M,\mathcal D, g)$.
Then we have a neighborhood $T\subset M$ of $p$ such that for every $0<\ep<1$ there exists a radius $r_\ep>0$ such that
\begin{equation}
2r(1-\ep)	\le {\rm diam}(B(q,r))\le 2r
\end{equation}
for every $q\in T$ and $0<r<r_\ep$.
\end{theorem}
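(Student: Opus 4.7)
The upper bound $\operatorname{diam}(B(q,r))\le 2r$ is immediate from the triangle inequality. The entire work lies in the lower bound, which I would obtain by transferring the ``diameter-two'' property of the unit ball in the tangent Carnot group at $q$ to the ambient manifold, uniformly in $q$ varying in a small neighborhood of $p$. The three ingredients used to effect this transfer are: Corollary~\ref{cor:uniformradius}, which gives a uniform exponential radius $r_0>0$ and a neighborhood $U_0$ of $p$ such that $B(q,r_0)\subset F_\bX(q,V_0)$ for every $q\in U_0$; Proposition~\ref{prop:localdist}, which then turns $F_\bX(q,\cdot)$ into a genuine isometry between $\widetilde B_q(0,r)$ and $B(q,r)$ once $4r<r_0$; and Theorem~\ref{thm:unifblowup}(ii), which provides the uniform convergence $\widetilde d_q^r\to \widehat d_q$ in $L^\infty(A\times A)$ for any bounded open $A\subset\R^\n$, uniformly in $q$ on compact subsets of $U_0$.

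\textbf{Antipodal points in the tangent Carnot group.} By Theorem~\ref{th:tangentspace}, for each $q\in U_0$ the frame $\widehat \bX^q$ defines a Carnot group structure on $\R^\n$, and the tangent distance $\widehat d_q$ is the left-invariant, $\delta_r$-homogeneous Carnot--Carath\'eodory distance. In particular, $(\R^\n,\widehat d_q)$ is a geodesic metric space. For fixed $\eta\in(0,1)$, I would construct points $\hat x_q,\hat y_q\in\widehat B_q(0,1)$ with $\widehat d_q(\hat x_q,\hat y_q)\ge 2(1-\eta)$ as follows: choose any $\hat z_q$ with $\widehat d_q(0,\hat z_q)=2(1-\eta)$ (which exists by $1$-homogeneity) and an arc-length length-minimizer $\gamma\colon[0,2(1-\eta)]\to\R^\n$ from $0$ to $\hat z_q$, so that the midpoint $m=\gamma(1-\eta)$ satisfies $\widehat d_q(0,m)=\widehat d_q(m,\hat z_q)=1-\eta$. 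Setting $\hat x_q=m^{-1}$ and $\hat y_q=m^{-1}\cdot\hat z_q$ (group operation of the nilpotent approximation), left-invariance yields $\widehat d_q(0,\hat x_q)=\widehat d_q(0,\hat y_q)=1-\eta$ and $\widehat d_q(\hat x_q,\hat y_q)=\widehat d_q(0,\hat z_q)=2(1-\eta)$. The smooth dependence of $\widehat \bX^q$ on $q$ gives, via the standard ball-box estimates already used in the proof of Theorem~\ref{th:DVGen}, a constant $C>0$ with $\widehat d_q(0,x)\ge C^{-1}|x|$ on a fixed Euclidean ball, uniformly for $q$ in a compact neighborhood $K\subset U_0$ of $p$; in particular $|\hat x_q|,|\hat y_q|\le C$ for every such $q$.

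\textbf{Transfer and conclusion.} Fix the bounded open set $A=B_\rE(0,C+1)$. By Theorem~\ref{thm:unifblowup}(ii) there exists $r_1=r_1(\eta)>0$ such that
\[
\sup_{q\in K}\sup_{x,y\in A}|\widetilde d_q^r(x,y)-\widehat d_q(x,y)|<\eta \qquad\text{for all } 0<r<r_1.
\]
Applied to $\hat x_q,\hat y_q$ this gives $\widetilde d_q^r(0,\hat x_q)\le 1$, $\widetilde d_q^r(0,\hat y_q)\le 1$ and $\widetilde d_q^r(\hat x_q,\hat y_q)\ge 2-3\eta$. Using the scaling identity $\widetilde d_q^r(x,y)=\frac{1}{r}\widetilde d_q(\delta_r x,\delta_r y)$, the rescaled points $\delta_r\hat x_q,\delta_r\hat y_q$ lie in $\widetilde B_q(0,r)$ and $\widetilde d_q(\delta_r\hat x_q,\delta_r\hat y_q)\ge r(2-3\eta)$. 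For $4r<r_0$, Proposition~\ref{prop:localdist} shows that $F_\bX(q,\cdot)$ sends $\widetilde B_q(0,r)$ isometrically onto $B(q,r)$, so the images $x_q,y_q\in B(q,r)$ satisfy $d(x_q,y_q)\ge r(2-3\eta)$. Given $\ep\in(0,1)$, choose $\eta=\ep/3$ and set $r_\ep=\min(r_1(\ep/3),r_0/4)$ and $T=\operatorname{int}(K)$: then $\operatorname{diam}(B(q,r))\ge d(x_q,y_q)\ge 2r(1-\ep/2)>2r(1-\ep)$ for every $q\in T$ and $0<r<r_\ep$.

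\textbf{Main obstacle.} The chief difficulty is uniformity. The construction of the antipodal pair $(\hat x_q,\hat y_q)$ is performed pointwise in the tangent Carnot group, but the argument requires that these points lie in a fixed Euclidean bounded set on which Theorem~\ref{thm:unifblowup}(ii) delivers uniform convergence, and that their images $\delta_r\hat x_q,\delta_r\hat y_q$ lie inside a sub-Riemannian ball on which Proposition~\ref{prop:localdist} guarantees isometricity. The simultaneous control of (a) the Euclidean size of the tangent unit $\widehat d_q$-balls, (b) the uniform exponential radius from Corollary~\ref{cor:uniformradius}, and (c) the rate of convergence in Theorem~\ref{thm:unifblowup}, is what makes the constant $r_\ep$ genuinely uniform in $q\in T$, rather than depending on the basepoint.
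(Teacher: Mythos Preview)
Your proposal is correct and follows essentially the same route as the paper: both arguments combine Corollary~\ref{cor:uniformradius}, Proposition~\ref{prop:localdist}, and Theorem~\ref{thm:unifblowup}(ii) to transfer the diameter-two property of the tangent Carnot ball to $B(q,r)$ uniformly in $q$. The only cosmetic difference is that the paper uses the set inclusion $\widehat B_q(0,r(1-\ep))\subset\widetilde B_q(0,r)$ together with $\diam\widehat B_q(0,r(1-\ep))=2r(1-\ep)$ to obtain $2r-3r\ep\le\diam\widetilde B_q(0,r)$ directly, whereas you make the antipodal pair explicit; your choice of a fixed Euclidean ball $A=B_\rE(0,C+1)$ (rather than the $q$-dependent $\widehat B_q(0,1)$ used in the paper) is arguably a bit cleaner when invoking Theorem~\ref{thm:unifblowup}(ii).
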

\begin{proof}
We choose a privileged orthonormal frame $\bX=(X_1,\dots, X_\n)$ in an open neighborhood $U$ of $p\in M$ and a system of uniform exponential coordinates of the first kind $F_\bX\colon U\times V\to W$  relative to $\bX$, according to Definition~\ref{def:unifexpcoord} and \eqref{eqdef:exponentialcoordinates}. 
Then we consider $\widetilde d^r_q$ and $\widehat d_q$ as in Theorem~\ref{thm:unifblowup}, therefore for every bounded
set $A\subset \R^\n$ and every compact neighborhood $K\subset U$ of $p$, we get
\begin{equation}\label{eq:dtildedcappuccio}
\lim_{r\to0}\;\sup\{|\widetilde d_q^r(x,y)-\widehat d_q(x,y)|: x,y\in A, q\in K\}=0.
\end{equation}
We consider $A=\widehat B_q(0,1)=\set{x\in\R^\n:\widehat d_q(x,0)<1}$, 
hence there exists $r_\ep>0$ such that 
the distances $\widetilde d^r_q$ are well defined on $\widehat B_q(0,1)$, $\widetilde d_q$ is well defined on $\widehat B_q(0,r)$  and we have
\[
-\ep<\frac{\widehat d_q(\delta_rx,\delta_ry)-\widetilde{d_q}(\delta_rx,\delta_ry)}{r}<\ep
\]
for all $x,y\in \widehat B_q(0,1)$, $q\in K$ and $0<r\le r_\ep$.
In different terms, we have
\[
-r\ep+\widetilde d_q(x,y)<\widehat d_q(x,y)<\widetilde d_q(x,y)+r\ep
\]
for each $x,y\in \widehat B_q(0,r)$, $q\in K$ and $0<r\le r_\ep$.
As a consequence, for $x\in \widehat B_q(0,r-r\ep)$ we obtain 
$\widetilde d_q(0,x)<r$, therefore
\[
\widehat B_q(0,r-r\ep)\subset \widetilde B_q(0,r)
\]
for every $q\in K$ and $0<r\le r_\ep$. Combining the inclusion and the previous inequality, for all $x,y\in \widehat B_q(0,r-r\ep)$, we get
\[
\widehat d_q(x,y)<\diam\widetilde B_q(0,r)+r\ep,
\]
that immediately yields
\beq\label{eq:unifestimradius}
2r-3r\ep \le \diam \widetilde B_q(0,r).
\eeq
From Corollary~\ref{cor:uniformradius} we get an open neighborhood $U_0\subset U$ of $p$ and $r_0>0$ such that 
\[
B(q,r_0)\subset F_\bX(q,V_0)
\]
for every $q\in U_0$, where $V_0\subset V$ is a bounded open neighborhood of $0$.
We fix an arbitrary compact neighborhood $T\subset U_0\subset U$ of $p$ and 
notice that the estimate \eqref{eq:unifestimradius} holds
for all $q\in T$ and $0<r\le r_\ep$, where $r_\ep$ may also depend on $T$.
Furthermore, for every $q\in T$ the diffeomorphism
$F_\bX(q,\cdot):V_0\to F_\bX(q,V_0)$ is a system of exponential coordinates of the first kind centered at the regular point $q$. We can apply Proposition~\ref{prop:localdist}, hence
\[
\widetilde{d}_q(x,y)=d(F_{q,\bX}(x),F_{q,\bX}(y))
\] 
holds for every $x,y\in F_{q,\bX}^{-1}(B(q,\rho))=\widetilde B_q(0,\rho)$ 
and every $q\in T$, where $0<\rho\le r_0/4$.
Due to \eqref{eq:unifestimradius}, we obtain
\[
2r-3r\ep \le \diam\widetilde B_q(0,r)=\diam B(q,r)
\]
for all $r\le \min\set{r_\ep,r_0/4}$ and $q\in T$, concluding the proof.
\end{proof}

\section{Finding the spherical Federer density of hypersurfaces}\label{sect:Federer}

The present section is devoted to the computation of the spherical Federer density of a hypersurface in an equiregular SR manifold at any of its noncharacteristic points. Terminology and tools will be also introduced in the next sections.

\subsection{Spherical Federer density and measure-theoretic area formula}\label{sect:FedDens}
In this section, we introduce the measure-theoretic area formula, which will lead us to 
the sub-Riemannian area formula \eqref{eq:SRareaperimeter}. We follow the presentation in \cite{LecMag21}.

\begin{definition}[Spherical measure and Hausdorff measure]\label{d:sizephi}\rm 
If $X$ is a metric space, $\delta>0$, $E\subset X$, $\cS$ is a fixed family of closed sets, $\set{F_j:j\in\N}\subset\cS$ covers $E\subset X$ and $\diam(F_j)\le \delta$ for every $j\in\N$, we say that this family is a {\em $\delta$-covering} of $E$. For $\alpha>0$, we set $\zeta_\alpha(F)= 2^{-\alpha}\, \diam(F)^\alpha$ for any $F\in\cS$ and introduce the set function
\beq
\phi_{\alpha,\delta}(E)=\inf \left\lbrace\sum_{j=0}^\infty \zeta_{\alpha}(E_j):
	\set{F_j:j\in\N}\subset\cS\;\text{is a $\delta$-covering of $E$} \right\rbrace\,.
\eeq
The Borel measure arising from this {\em Carathéodory's construction} is 
\beq
\phi^\alpha(E)=\sup_{\delta>0}\phi_{\alpha,\delta}(E).
\eeq
If $\cS$ is the family $\cF_b$ of all closed balls with positive diameter, then $\phi^\alpha$ becomes the {\em $\alpha$-dimensional spherical measure}, denoted by $\cS^\alpha$. 
If $\cS$ is the family of all closed sets, then $\phi^\alpha$ coincides with the $\alpha$-dimensional Hausdorff measure $\cH^\alpha$. 

If $k$ is a positive integer and $X$ is a finite dimensional Hilbert space,
we define the $k$-dimensional Euclidean Hausdorff measure as $\cH^k_\rE=\omega_k\cH^k$. Here the distance arises from the Euclidean distance and $\omega_k$ is the $k$-dimensional Lebesgue measure of the Euclidean unit ball in $\R^k$.
\end{definition}

The next definitions appear in the statement of the 
measure-theoretic area formula for the spherical measure (Theorem~\ref{th:sphericalarea}).

\begin{definition}[Federer density for the spherical measure] \label{def:FedDens}
We consider an outer measure $\mu$ over a metric space $X$ and fix $\alpha>0$.
The \emph{spherical Federer $\alpha$-density} of $\mu$ at $p$ is defined as
\begin{equation}\label{eq:Federerdensity}
		\cs^\alpha(\mu,p)= \inf_{\varepsilon >0}\sup\left\{\frac{2^\alpha\mu(\B)}{\diam ^\alpha(\B)}: p\in \B,\, 
		\B\in\cF_b, \,\diam (\B)<\varepsilon\right\}.
\end{equation}
\end{definition}

\begin{definition}[Diametric regularity]
A metric space $(X,d)$ is {\em diametrically regular}
if for every $x\in X$ there exist $\delta_x>0$ and $R_x>0$ such that $(0,+\infty)\ni r\to\diam(B(y,r))$ is continuous on $(0,\delta_x)$
for every $y\in B(x,R_x)$.
\end{definition}

The next result is the measure-theoretic area formula of \cite[Theorem~5.7]{LecMag21}.

\begin{theorem}[Measure-theoretic formula for the spherical measure]\label{th:sphericalarea}
Let $\mu$ be an outer measure over a metric space $X$ and let $\alpha>0$. 
We choose a Borel set $A\subset X$ and assume the validity of the following conditions.
	\begin{enumerate}
		\item
		$X$ is a diametrically regular metric space.		
		\item 
		$\mu$ is both a regular measure and a Borel measure.
		\item
		$(\cF_b)_{\mu,\zeta_{\alpha}}$ covers $A$ finely.
		\item
		$A$ has a countable covering whose elements are open and have $\mu$-finite measure.
		\item
		The subset $\lbrace x\in A: \cs^\alpha(\mu,x)=0\rbrace$ is
		$\sigma$-finite with respect to $\cS^\alpha$.
		\item
		We have the absolute continuity $\mu\res A<<\cS^\alpha\res A$.
	\end{enumerate}
	Then $\cs^\alpha(\mu,\cdot)\colon A\to[0,+\infty]$ is Borel and for every Borel set $B\subset A$ we have 
	\begin{equation}
		\mu(B)=\int_B\cs^\alpha(\mu,x)\,d\cS^\alpha(x).
	\end{equation}
\end{theorem}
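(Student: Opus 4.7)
The plan is to reduce the claim to a Radon--Nikodym style identification of the density. By hypothesis (6), on the set $A$ the measure $\mu$ is absolutely continuous with respect to $\cS^\alpha$. Combined with (4) and the fact that $\mathcal{S}^\alpha$ is a Borel-regular (outer) measure on a metric space, standard arguments give a Borel density $f\colon A\to[0,+\infty]$ with $\mu(B)=\int_B f\,d\cS^\alpha$ for every Borel $B\subset A$. The content of the theorem, then, is that $f(x)=\cs^\alpha(\mu,x)$ for $\cS^\alpha$-almost every $x\in A$. So the whole game is the pointwise identification of the Radon--Nikodym derivative with the spherical Federer density.

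First I would verify that $x\mapsto \cs^\alpha(\mu,x)$ is Borel measurable, using the diametric regularity assumption (1): this ensures that for each fixed $\varepsilon>0$ the supremum in \eqref{eq:Federerdensity} can be computed along a countable dense family of centers and radii, producing a Borel function, and the infimum in $\varepsilon$ of a countable decreasing family remains Borel. Next I would establish a Vitali-type covering lemma for $\cF_b$ adapted to the weight $\zeta_\alpha$: the fine covering hypothesis (3) means precisely that at each $x\in A$ we have arbitrarily small closed balls $\B\ni x$ along which $\mu(\B)/\zeta_\alpha(\B)$ is controlled, so one can extract a countable disjoint subfamily covering $A$ up to an $\cS^\alpha$-null set, invoking (2) for the required regularity properties.

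The main step is the two-sided differentiation. For the upper bound $\cs^\alpha(\mu,x)\le f(x)$, at $\cS^\alpha$-a.e.\ point $x$ one uses the Lebesgue differentiation theorem along balls (available thanks to the Vitali covering just discussed plus the $\sigma$-finiteness ensured by (4)) to compare $\mu(\B)/\cS^\alpha(\B)$ with $f(x)$, then passes from $\cS^\alpha(\B)$ to $\zeta_\alpha(\B)=\diam(\B)^\alpha/2^\alpha$ by the elementary inequality $\cS^\alpha(\B)\le\zeta_\alpha(\B)$. For the lower bound $\cs^\alpha(\mu,x)\ge f(x)$, one fixes $t<f(x)$, considers the fine covering of $\{f>t\}$ by closed balls $\B$ with $\mu(\B)\ge t\,\zeta_\alpha(\B)$, and uses the very definition of $\cS^\alpha$ as an infimum over $\delta$-coverings to conclude $\mu(E)\ge t\,\cS^\alpha(E)$ on every Borel $E\subset\{f>t\}$; letting $t\nearrow f(x)$ yields the density estimate. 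Hypothesis (5) is then used to handle the exceptional set where $\cs^\alpha(\mu,\cdot)=0$: there $f$ must also vanish $\cS^\alpha$-a.e., and the contribution of that set to the integral is zero by $\sigma$-finiteness.

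The subtle point I expect to be the main obstacle is that $\cs^\alpha$ is defined using $\diam(\B)^\alpha$ rather than $\cS^\alpha(\B)$, and in a general metric space there is no a priori comparability between these quantities. The standard workaround exploits both the definition of $\cS^\alpha$ via coverings by balls and the diametric regularity assumption, which makes the map $r\mapsto\diam(B(y,r))$ continuous in a neighborhood of each point, so that one can approximate $2r$ by $\diam(B(y,r))$ and thereby connect the Federer density with the differentiation of $\mu$ along balls of controlled radius. All the other steps are essentially routine measure-theoretic manipulations, but honest care is needed at this point to pass from covers by metric balls to the normalization $\zeta_\alpha$.
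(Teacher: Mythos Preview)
The paper does not prove this theorem: it is quoted from \cite[Theorem~5.7]{LecMag21}, so there is no in-paper proof to compare against. I therefore comment on your proposal on its own terms.

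Your opening move has a real gap. You invoke Radon--Nikodym to produce a density $f$, citing hypothesis (4); but (4) gives $\sigma$-finiteness of $\mu$ on $A$, while Radon--Nikodym needs the \emph{dominating} measure $\cS^\alpha$ to be $\sigma$-finite on $A$, and this is nowhere assumed---hypothesis (5) grants it only on the zero set of the density. In the intended applications (e.g.\ $\alpha=Q-1$ on a $Q$-dimensional SR manifold) $\cS^\alpha$ is certainly not $\sigma$-finite on the ambient space. There is also a structural mismatch in your identification step: Lebesgue differentiation along centered balls would produce the density $\lim_{r\to0} \mu(\B(x,r))/\cS^\alpha(\B(x,r))$, whereas $\cs^\alpha(\mu,x)$ is a supremum over \emph{all} closed balls containing $x$, normalized by $\zeta_\alpha(\B)=2^{-\alpha}\diam(\B)^\alpha$ rather than by $\cS^\alpha(\B)$. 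These quantities do not agree in general, and that discrepancy is precisely why the Federer density, and not the ordinary upper density, is the object appearing in the statement.

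The argument in \cite{Mag30,LecMag21} avoids both problems by dispensing with Radon--Nikodym. One proves two Federer-type comparison inequalities directly from the definitions: a bound $\cs^\alpha(\mu,\cdot)\le t$ on a set $E$ yields $\mu(E)\le t\,\cS^\alpha(E)$ via the covering definition of $\cS^\alpha$, and a bound $\cs^\alpha(\mu,\cdot)\ge t$ on $E$ yields $\mu(E)\ge t\,\cS^\alpha(E)$ via the fine covering hypothesis (3) together with the regularity of $\mu$. Stratifying $B$ by level sets $\{t_{k-1}<\cs^\alpha(\mu,\cdot)\le t_k\}$ and summing gives the integral formula; hypotheses (4)--(6) are used to handle the strata where the density is $0$ or $+\infty$. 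Your lower-bound paragraph is in fact close to this idea, but it should be run on the level sets of $\cs^\alpha(\mu,\cdot)$ itself rather than on those of the unavailable Radon--Nikodym density $f$.
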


\begin{rmk}\label{rem:metricarea}
It is not difficult to realize that Theorem~\ref{th:sphericalarea} combined with the double blow-up immediately gives \eqref{eq:SRareaperimeter}.
If we consider the general definition of Federer density, see \cite[Definition~3]{LecMag21}, then actually $(\cF_b)_{\mu,\zeta_{\alpha}}=\cF_b$, hence (3) is satisfied. The continuity of the diameter function in sub-Riemannian manifolds proves condition (1). 
Due to the finiteness of the Federer density \eqref{eq:theta=beta}, 
we can apply \cite[Proposition~3.3]{LecMag21}, hence getting the absolute continuity stated in (6). The remaining conditions are straightforward.
\end{rmk}

\subsection{Representation of the perimeter measure and double blow-up}

In this section we prove the integral formula \eqref{eq:perimetrorimaniano} for the perimeter measure, which will be used in Theorem~\ref{th:main} to compute the Federer density.


Using the standard properties of the interior product $i_X(\omega)$ of a vector field $X$ on the volume form $\omega$, we obtain the following variant of the divergence theorem.

\begin{theorem}\label{th:divergence}
	Let $(\Sigma,g)$ be an oriented Riemannian manifold of class $C^1$ with boundary $\partial \Sigma$ and let $X$ be a vector field on $\Sigma$ with compact support. Then, for every volume form $\omega$ on $M$, we have
	\[
	\int_\Sigma \dive_\omega X \,\omega=\int_{\partial \Sigma} g(X,\nu) \,\omega\res \nu,
	\]
	where $\nu$ denotes the outer normal to $\partial \Sigma$ with respect to $g$.
\end{theorem}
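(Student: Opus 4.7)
\medskip

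\textbf{Proof plan.} The strategy is to rewrite the left-hand side as the integral of an exact $(\n-1)$-form and apply Stokes' theorem, then identify the boundary integrand with $g(X,\nu)\,\omega\res\nu$ via an orthogonal decomposition of $X$ along $\partial\Sigma$.

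\medskip

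First, I would recall the defining identity \eqref{eq:divomega}, namely $L_X(\omega)=(\dive_\omega X)\,\omega$. Since $\omega$ is a top-degree form on $\Sigma$, we have $d\omega=0$, so Cartan's magic formula $L_X=d\circ i_X+i_X\circ d$ collapses to $L_X\omega=d(i_X\omega)$. Therefore
\begin{equation*}
\int_\Sigma \dive_\omega X\,\omega=\int_\Sigma L_X\omega=\int_\Sigma d(i_X\omega),
\end{equation*}
and, since $X$ is compactly supported and $\Sigma$ is an oriented $C^1$ manifold with boundary, Stokes' theorem yields
\begin{equation*}
\int_\Sigma d(i_X\omega)=\int_{\partial\Sigma} \iota^\ast(i_X\omega),
\end{equation*}
where $\iota\colon\partial\Sigma\hookrightarrow\Sigma$ is the inclusion, and $\partial\Sigma$ carries the boundary orientation induced by the outward normal $\nu$ (so that $(v_1,\dots,v_{\n-1})$ is a positive basis of $T_p(\partial\Sigma)$ iff $(\nu(p),v_1,\dots,v_{\n-1})$ is positive in $T_p\Sigma$).

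\medskip

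The second step is the pointwise identification of $\iota^\ast(i_X\omega)$ with $g(X,\nu)\,\omega\res\nu$ on $\partial\Sigma$. Along $\partial\Sigma$ I decompose $X=g(X,\nu)\,\nu+X^T$, where $X^T\in T(\partial\Sigma)$ is the tangential component, by orthogonality with respect to $g$. By linearity of the interior product,
\begin{equation*}
i_X\omega=g(X,\nu)\,i_\nu\omega+i_{X^T}\omega.
\end{equation*}
The key observation is that $\iota^\ast(i_{X^T}\omega)=0$: indeed, for any tangent vectors $v_1,\dots,v_{\n-1}\in T_p(\partial\Sigma)$, the vectors $X^T(p),v_1,\dots,v_{\n-1}$ all lie in the $(\n-1)$-dimensional subspace $T_p(\partial\Sigma)\subset T_p\Sigma$, and thus the $\n$-form $\omega$ evaluated on them vanishes. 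Hence $\iota^\ast(i_X\omega)=g(X,\nu)\,\iota^\ast(i_\nu\omega)=g(X,\nu)\,\omega\res\nu$, the last equality being the definition of the induced volume form on $\partial\Sigma$.

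\medskip

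Combining the two steps gives exactly the asserted identity. I expect no serious obstacle here: the only delicate point is to verify that the orientation conventions used in Stokes' theorem and in the definition of $\omega\res\nu$ as the volume form on $\partial\Sigma$ are compatible, which is guaranteed precisely by the standard choice above of orienting $\partial\Sigma$ by the outward unit normal $\nu$. The compact support hypothesis on $X$ ensures that all integrals converge and that Stokes' theorem applies without boundary-at-infinity issues.
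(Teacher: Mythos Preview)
Your proof is correct and follows precisely the approach the paper indicates: the paper does not spell out a proof but states that the theorem follows from ``the standard properties of the interior product $i_X(\omega)$'', which is exactly your combination of Cartan's formula, Stokes' theorem, and the orthogonal decomposition of $X$ along $\partial\Sigma$. Your write-up simply fills in those details.
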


We recall that the divergence $\dive_\omega X$ with respect to $\omega$ is already introduced in \eqref{eq:divomega}.

\begin{rmk}\label{rmk:metric} In the special case where the volume form $\omega$ is the standard Riemannian volume form, the previous theorem yields the classical Divergence Theorem, see e.g.\ \cite[Theorem~1.47]{CLN2006Ricci}.
\end{rmk}

Theorem~\ref{th:divergence} is used in the proof of Theorem~\ref{th:formulaperimetro}, that also needs the next definitions.

\begin{definition}[Horizontal gradient]
Let $U$ be an open subset of $M$, where $(M,\mathcal D, g)$ is a sub-Riemannian manifold, and let $f\colon U\to \R$ be $C^1$ smooth. For every $p\in U$, we denote by $\nabla_\mathcal D f(p)$ the {\em horizontal gradient} of $f$ at $p$, namely the unique vector of $\mathcal D_p$ such that $\der f(p)(w)=g_p(\nabla_\cD f(p),w)$ for every $w\in\cD_p$.
\end{definition}

\begin{definition}[Horizontal normal and characteristic points]\label{def:hnor}
Let $(M,\mathcal D,g)$	be a sub-Riemannian measure manifold and let $\Sigma$ be an oriented $C^1$ smooth hypersurface $\Sigma$ embedded into $M$. 
Let $\bar g$ be a Riemannian metric on $M$ which extends the sub-Riemannian metric $g$ and let $\nu$ be a unit normal field on $\Sigma$. 
The {\em horizontal normal} $\nu_\mathcal D(p)$ at $p\in\Sigma$ is the orthogonal projection of $\nu(p)$ onto $\mathcal D_p$ with respect to $\bar g$.  We say that a point $p\in \Sigma$ is {\em characteristic} if $\cD_p\subset T_p\Sigma$, hence if and only if $\nu_\cD(p)=0$.
\end{definition}

\begin{definition}[Regular points for the divergence theorem] \label{def:regpbdry}
	Let $M$ be a manifold and let $\Omega\subset M$ be an open set. We say that $p\in \partial \Omega$ is a regular point of $\partial \Omega$ if there exist a neighborhood $U$ of $p$ and a map $f\in C^1(U)$ so that $\partial \Omega\cap U=\{q\in U: f(q)=0 \}$, $\dd f(q)\neq 0$ for all $q\in U$ and $\Omega \cap U=\{q\in U: f(q)<0\}$. 
\end{definition}

\begin{theorem}\label{th:formulaperimetro}
	Let $(M,\mathcal D,g,\omega)$ be an equiregular sub-Riemannian measure manifold and consider an open set $\Omega\subset M$ with $C^1$ smooth boundary. If $\bar g$ is any Riemannian metric on $M$ such that $\bar g|_{\mathcal D}=g$, then for every open set $U\subset M$, we have
	\begin{equation}\label{eq:perimetrorimaniano}
		\|D_{\omega,g}\bu_\Omega\|(U)=\int_{\partial \Omega \cap U} \|\omega\|_{\bar g}\, |\nu_{\mathcal D}|_{\bar g}\,\dd \sigma_{\bar g},
	\end{equation}
	where $\nu_\mathcal D$ denotes the projection on $\mathcal D$ with respect to $\bar g$ of the outer normal $\nu$ to $\partial \Omega$, and $\sigma_{\bar g}$ is the Riemannian surface measure associated with ${\bar g}$ of the boundary $\partial\Omega$.
\end{theorem}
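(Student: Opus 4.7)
The plan is to reduce the defining supremum of $\|D_{\omega,g}\bu_\Omega\|(U)$ to a pointwise optimization on $\partial\Omega\cap U$ via Theorem~\ref{th:divergence}. For any test pair $X\in\cD^g$ and $\varphi\in C^\infty_c(U)$ with $|\varphi|\le 1$, applying the divergence formula to the compactly supported field $\varphi X$ on $\overline\Omega$ yields
\begin{equation*}
\int_\Omega\dive_\omega(\varphi X)\,\omega=\int_{\partial\Omega\cap U}\varphi\,\bar g(X,\nu)\,\omega\res\nu.
\end{equation*}
A pointwise computation in a $\bar g$-orthonormal frame $(\nu,e_1,\ldots,e_{\n-1})$ along $\partial\Omega$, combined with writing $\omega=\|\omega\|_{\bar g}\,\omega_{\bar g}$ in terms of the Riemannian volume form $\omega_{\bar g}$, shows that $\omega\res\nu=\|\omega\|_{\bar g}\,\dd\sigma_{\bar g}$ as top forms on $\partial\Omega$. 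Moreover, decomposing $\nu=\nu_\cD+\nu^\perp$ with $\nu^\perp$ the $\bar g$-orthogonal complement of $\nu_\cD$ in $T_pM$ gives $\bar g(X,\nu)=g(X,\nu_\cD)$ for every $X\in\cD$, and $|\nu_\cD|_g=|\nu_\cD|_{\bar g}$ because $\bar g$ extends $g$ on $\cD$.

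The upper bound $\|D_{\omega,g}\bu_\Omega\|(U)\le\int_{\partial\Omega\cap U}\|\omega\|_{\bar g}\,|\nu_\cD|_{\bar g}\,\dd\sigma_{\bar g}$ follows at once from Cauchy--Schwarz, $|g(X,\nu_\cD)|\le|X|_g\,|\nu_\cD|_g\le|\nu_\cD|_{\bar g}$, combined with $|\varphi|\le 1$. For the matching lower bound I would work away from characteristic points: on the open set $\Sigma_\varepsilon=\{q\in\partial\Omega\cap U:|\nu_\cD|_{\bar g}(q)>\varepsilon\}$ the horizontal field $\nu_\cD/|\nu_\cD|_g$ is smooth and admits smooth horizontal subunit extensions to tubular neighborhoods via a local orthonormal frame of $\cD$. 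Covering a compact set $K\Subset\Sigma_\varepsilon$ by finitely many such charts and gluing the local extensions through a partition of unity $\{\psi_i\}$ produces a global horizontal field $X^\varepsilon$; linearity of $\cD_q\subset T_qM$ ensures horizontality is preserved, while $\sum\psi_i\le 1$ guarantees $|X^\varepsilon|_g\le 1$ and $g(X^\varepsilon,\nu_\cD)=|\nu_\cD|_g$ on $K$. Coupling $X^\varepsilon$ with a sequence $\varphi_k\in C^\infty_c(U)$ approximating $\bu_U$ monotonically from below and using the divergence formula above, dominated convergence recovers $\int_K|\nu_\cD|_{\bar g}\,\|\omega\|_{\bar g}\,\dd\sigma_{\bar g}$ in the limit; letting $K$ exhaust $\Sigma_\varepsilon$ and then $\varepsilon\to 0^+$ yields the full right-hand side, since continuity of $\nu_\cD$ makes the characteristic set $\{|\nu_\cD|_{\bar g}=0\}$ contribute zero.

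The main obstacle is the construction of the admissible horizontal subunit field $X^\varepsilon$ that pointwise attains $g(X^\varepsilon,\nu_\cD)=|\nu_\cD|_g$ on a sufficiently large portion of $\partial\Omega\cap U$: the field must remain a global section of $\cD$, so the extension off $\partial\Omega$ must be carried out within $\cD$ by writing $\nu_\cD/|\nu_\cD|_g$ in the basis of a smooth horizontal frame near the hypersurface and extending its coefficients transversally; the characteristic set is then handled indirectly by the auxiliary parameter $\varepsilon$ rather than by explicit construction. Verifying measurability of $\nu_\cD$ and the continuity needed for the identification $\omega\res\nu=\|\omega\|_{\bar g}\dd\sigma_{\bar g}$ (a minor computation, using Remark~\ref{rmk:metric} and comparison with the Riemannian volume form) is routine by comparison.
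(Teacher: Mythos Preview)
Your approach is essentially the same as the paper's: apply Theorem~\ref{th:divergence} to convert the volume integral into a boundary integral, get the upper bound by Cauchy--Schwarz, and obtain the lower bound by plugging in an approximately optimal horizontal subunit field. The differences are organizational. The paper first proves \eqref{eq:perimetrorimaniano} on small neighborhoods $V_p$ (where a single orthonormal frame is available) and then glues the \emph{measures} via a partition of unity, whereas you glue the \emph{test vector field} via a partition of unity and argue directly on $U$. For the lower bound, the paper extends $\nu_\cD$ continuously to $V_p$ by Tietze, approximates by smooth horizontal fields $\nu_k$ via Stone--Weierstrass, and tests with $X_k=\phi\,\nu_k/(|\nu_k|_g+e^{-k})$; the $e^{-k}$ regularization simultaneously absorbs the characteristic points and the lack of smoothness, so no separate $\Sigma_\varepsilon$ truncation is needed. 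Your $\varepsilon$-truncation is an equally valid device for the characteristic set.

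There is one genuine slip: since $\partial\Omega$ is only $C^1$, the unit normal $\nu$ is merely continuous, so $\nu_\cD/|\nu_\cD|_g$ is \emph{not} smooth on $\Sigma_\varepsilon$ as you claim, and writing it in a smooth horizontal frame and ``extending its coefficients transversally'' only produces a continuous horizontal field. You still need a uniform smooth approximation within $\cD$ before you have an admissible $X\in\cD^g$; this is exactly the role of the Stone--Weierstrass step in the paper. Once you insert that approximation (and keep track of the small error on $(\partial\Omega\cap\supp\varphi)\setminus K$, which is controlled since $|g(X,\nu_\cD)|\le|\nu_\cD|_{\bar g}$ there), your argument goes through.
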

\begin{proof}
	We first prove a local version of the statement. Let $p\in \partial \Omega\cap U$, let $V_p$ be an open and relatively compact neighborhood of $p$ that admits a positively oriented adapted frame $(e_1,\dots,e_\n)$ in $V_p$ with respect to $\omega$, 
	namely $\omega(e_1\wedge\cdots\wedge e_\n)>0$. We can assume that the frame is also orthonormal with respect to $\bar g$. Denote by $(\alpha_1,\dots,\alpha_\n)$ the dual basis of $(e_1,\dots,e_\n)$ and set the Riemannian volume form $\omega_{\bar g}=\alpha_1\wedge\ldots\wedge \alpha_\n$. Hence, we have a unique everywhere positive function $a\in C^{\infty}(V_p)$ such that $\omega= a\omega_{\bar g}$. 	
	By the divergence theorem stated in Theorem~\ref{th:divergence}, for any smooth vector field $X$ with compact support in $V_p$ it holds
	\begin{equation}\label{eq:divergenza}
		\int_{\Omega} \dive_{\omega} X \omega=\int_{\partial \Omega\cap V_p} a \,\bar g(X,\nu)\, \omega_{\bar g}\res\nu=\int_{\partial \Omega\cap V_p} a \,\bar g(X,\nu)\, \der \sigma_{\bar g},
	\end{equation}
	where the last equality follows taking into account the induced positive orientation on the boundary $\partial\Omega$ and Remark~\ref{rmk:metric}.
	Taking the supremum over all $\ph X$ with $X\in \cD^g$ and $\ph\in C_c^\infty(V_p)$ and $|\ph|\le 1$, we immediately obtain
	\[
	\|D_{\omega,g}\bu_\Omega\|(V_p)\leq\int_{\partial \Omega\cap V_p} a\, |\nu_\mathcal D|_{\bar g}\, \der\sigma_{\bar g},
	\]
	from Definition~\ref{def:SRPer}.
	For the opposite inequality, we first continuously extend $\nu_{\mathcal D}$ on $V_p$, using Tietze's Extension Theorem. Then we apply Stone--Weierstrass' theorem to get a sequence of smooth {\em horizontal vector fields} $\nu_k$ defined on $\overline V_p$ that uniformly converges to $\nu_\mathcal D$ on $V_p$. Indeed, it is enough to write 
	\[ 
	\nu=\sum_{i=1}^\m \nu^ie_i\quad\text{and}\quad 
	\nu_k= \sum_{i=1}^\m\nu_k^ie_i,
	\]
	where each $\nu_k^i$ uniformly converges to $\nu^i$ on $\overline V_p$ as $k\to \infty$.
	Take now $\phi\in C_c^\infty(V_p)$ such that 	$0\leq \phi \leq1$ and 
	define $X_k= \phi\frac{\nu_k}{|\nu_k|_g+e^{-k}}$. By evaluating identity \eqref{eq:divergenza} for $X=X_k$, we get
	\[
	\int_{\partial \Omega\cap V_p} a \,\bar g(X_k,\nu)\, \der\sigma_{\bar g}\leq \|D_{\omega,g}\bu_\Omega\|(V_p).
	\]  
	Passing to the limit as $k\to \infty$ we obtain
	\[
	\int_{\partial \Omega\cap V_p} \phi \,a\, |\nu_\mathcal D|_{\bar g}\, \der\sigma_{\bar g}\leq \|D_{\omega,g}\bu_\Omega\|(V_p),
	\]
	and passing to the supremum on all $\phi\in C_c^\infty(V_p)$ with $0\leq \phi\leq 1$, we get
	\[
	\|D_{\omega,g}\bu_\Omega\|(V_p)=\int_{\partial \Omega\cap V_p} a\, |\nu_\mathcal D|_{\bar g}\, \der\sigma_{\bar g}.
	\]
	Since $\|\omega_{\bar g}\|_{\bar g}=1$, we notice that  $a=\|\omega\|_{\bar g}$. 
	In sum, we have proved that for any $p\in \partial\Omega\cap U$ there exists an open and relatively compact set $V_p$ containing $p$, such that 
	\begin{equation}\label{eq:uguaglianzasullepallette}
	\|D_{\omega,g}\bu_\Omega\|(V_p)=\int_{\partial \Omega\cap V_p} \|\omega\|_{\bar g}\, |\nu_\mathcal D|_{\bar g}\, \der\sigma_{\bar g}.
	\end{equation}
	Clearly, the previous equality also holds on any open subset of $V_p$. 
	By paracompactness of $M$, we find a countable and locally finite open covering 
	\[
	\mathcal F= \{W_{i}:i\in \mathbb N\}
	\]
	of $\partial\Omega\cap U$ such that identity \eqref{eq:uguaglianzasullepallette} holds on each $W_i$. Let $\{\rho_i:i\in \mathbb N\}$ be a smooth partition of unity subordinate to $\mathcal F$ (see e.g.\ \cite[Theorem 13.7]{Loring2011}) and define the Borel regular measures
	\[
	\mu_1=\|D_{\omega,g}\bu_\Omega\|\quad\mbox{and}\quad \mu_2= \|\omega\|_{\bar g}\, |\nu_\mathcal D|_{\bar g}\, \der\sigma_{\bar g}\res\partial\Omega
	\]
	on $U$. Since $\sum_{i\in \mathbb N} \rho_i=1$, one has
	\[
	\mu_1=\sum_{i\in \mathbb N}\rho_i\mu_1\res W_i=\sum_{i\in \mathbb N} \rho_i \mu_2\res W_i=\mu_2,
	\]
	due to the fact that \eqref{eq:uguaglianzasullepallette} extends to all measurable subsets of $W_i$.
\end{proof}

Theorem~\ref{th:formulaperimetro} motivates the following definition of surface measure for hypersurfaces embedded in {\em sub-Riemannian measure manifolds}.

\begin{definition}[SR surface measure]\label{def:SRmeas}
We consider a sub-Riemannian measure manifold $(M,\mathcal D,g,\omega)$ along with an oriented $C^1$ smooth hypersurface $\Sigma$ of $M$. Let $\bar g$ be a Riemannian metric on $M$ such that $\bar g|_{\mathcal D}=g$ and let $\nu$ be an orienting unit vector field on $\Sigma$, that is orthogonal to $\Sigma$ with respect to $\bar g$. Then we define the {\em SR surface measure} of $\Sigma$ as
\begin{equation}\label{def:SRsurfmeas}
	\sigma^{SR}_\Sigma(A)=\int_{\Sigma\cap A} \|\omega\|_{\bar g}\, |\nu_{\mathcal D}|_{\bar g}\,\dd \sigma_{\bar g},
\end{equation}
for every Borel set $A\subset M$.
\end{definition}

\begin{theorem}[Double blow-up]\label{th:main} 
	Let $(M,\mathcal D, g, \omega)$ be a sub-Riemannian measure manifold. We assume that $\Sigma\subset M$ is an oriented $C^1$ smooth hypersurface with orienting unit normal $\nu$ on $\Sigma\cap T$ and $T\subset M$ is an open neighborhood of $p\in \Sigma$. 
	We also denote by the same symbol $g$ a Riemannian metric on $M$ that extends the given sub-Riemannian metric and we consider the associated SR surface measure $\sigma_\Sigma^{SR}$.
	If $p$ is both a regular point of $M$ and a noncharacteristic point of $\Sigma$, then
	\beq\label{eq:theta=beta}
	\cs^{Q-1}(\sigma_\Sigma^{SR}, p)=\|\omega(p)\|_g\, \beta_{d,g}(\nu_\cD(p)),
	\eeq
	where $\nu_\cD(p)$ denotes the horizontal normal at $p\in\Sigma$.
\end{theorem}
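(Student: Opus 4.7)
The plan is to compute the spherical Federer density at $p$ by a double blow-up in exponential coordinates of the first kind. First I would fix a $\bar g$-orthonormal privileged frame $\bX=(X_1,\ldots,X_\n)$ around $p$ via Lemma~\ref{lemma:estensioneadattata} and work in the associated chart $F_{p,\bX}\colon V\to W$. In this picture the pull-back of $\bar g$ equals the Euclidean inner product at the origin, the continuous densities $\|\omega\|_g$ and $|\nu_\cD|_g$ tend to $\|\omega(p)\|_g$ and $|\nu_\cD(p)|_g$ at $0$, and Proposition~\ref{prop:localdist} identifies small balls $\B(q,r)\subset M$ with local balls $\widetilde\B_p(\tilde q,r)\subset\R^\n$. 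The Federer density is then computed by analysing the ratio $2^{Q-1}\sigma_\Sigma^{SR}(\B_k)/\diam(\B_k)^{Q-1}$ along sequences $\B_k=\B(q_k,r_k)\ni p$ with $\diam(\B_k)\to 0$. Since $p\in\B_k$ forces $\widetilde d_p(0,\tilde q_k)\le r_k\to 0$, Theorems~\ref{th:localgroup} and~\ref{thm:unifblowup} imply that the rescaled centers $z_k=\delta_{1/r_k}(\tilde q_k)$ stay eventually in a neighborhood of $\widehat\B_p(0,1)$, and by compactness I would pass to a subsequence $z_k\to z_0\in\widehat\B_p(0,1)$. Theorem~\ref{teo:diameter} yields $\diam(\B_k)/(2r_k)\to 1$, so the problem reduces to identifying $\lim_k \sigma_\Sigma^{SR}(\B_k)/r_k^{Q-1}$.

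\textbf{Identification of the two blow-ups.} By the uniform nilpotent approximation (Theorem~\ref{thm:unifblowup}), the rescaled local balls $\delta_{1/r_k}(\widetilde\B_p(\tilde q_k,r_k))$ converge to $\widehat\B_p(z_0,1)$ in Hausdorff distance. Noncharacteristicity of $p$ means that locally $\widetilde\Sigma=\{\tilde f=0\}$ with $\partial_i\tilde f(0)\ne 0$ for some $i\le\m$, so the Taylor expansion
\[
\tilde f(\delta_r y)=r\sum_{i=1}^\m y_i\,\partial_i\tilde f(0)+O(r^2)
\]
shows that $\delta_{1/r_k}(\widetilde\Sigma)$ converges, locally in $C^1$, to the horizontal tangent hyperplane $\Pi_0=\{y\in\R^\n : \sum_{i\le\m}y_i\,\partial_i\tilde f(0)=0\}$. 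A short computation using the orthonormality of $\bX$ at $p$ identifies $\Pi_0=(\dd F_{p,\bX})(0)^{-1}(\Pi(\nu_\cD(p)))$ and shows that the horizontal part of the Euclidean unit normal $n(0)$ to $\widetilde\Sigma$ at $0$ has length exactly $|\nu_\cD(p)|_g$.

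\textbf{Jacobian cancellation.} Next I would change variables along $\delta_{1/r}$ in the surface integral $\int_{\widetilde\Sigma\cap\widetilde\B_p(\tilde q_k,r_k)} \|\omega\|_g|\nu_\cD|_g\,\dd\sigma_{\bar g}$. The surface Jacobian of $\delta_{1/r}$ restricted to $\widetilde\Sigma$ at $x$ equals $r^{-Q}\sqrt{\sum_i r^{2w_i}n_i(x)^2}$, which as $r\to 0$ and $x\to 0$ behaves like $r^{-(Q-1)}|\nu_\cD(p)|_g$. This precisely cancels the factor $|\nu_\cD|_g$ in $\sigma_\Sigma^{SR}$, so by combining this with the uniform continuity of $\|\omega\|_g$ and the Hausdorff convergence of the rescaled balls I expect
\[
\lim_k\frac{\sigma_\Sigma^{SR}(\B_k)}{r_k^{Q-1}}=\|\omega(p)\|_g\,\cH_\rE^{\n-1}\bigl(\Pi_0\cap\widehat\B_p(z_0,1)\bigr).
\]

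\textbf{Sup over centers and main obstacle.} Together with $\diam(\B_k)\sim 2r_k$, this gives $2^{Q-1}\sigma_\Sigma^{SR}(\B_k)/\diam(\B_k)^{Q-1}\to\|\omega(p)\|_g\,\cH_\rE^{\n-1}(\Pi_0\cap\widehat\B_p(z_0,1))$ along any admissible sequence, providing the upper bound. For the matching lower bound, I would realize any prescribed $z\in\widehat B_p(0,1)$ by the choice $\tilde q_k=\delta_{r_k}z$: indeed $\widetilde d_p(0,\tilde q_k)\sim r_k\widehat d_p(0,z)<r_k$, so $p\in\B(q_k,r_k)$ for small $r_k$ and the corresponding limit is $\|\omega(p)\|_g\,\cH_\rE^{\n-1}(\Pi_0\cap\widehat\B_p(z,1))$. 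Taking sup over $z\in\widehat B_p(0,1)$ yields $\cs^{Q-1}(\sigma_\Sigma^{SR},p)=\|\omega(p)\|_g\,\beta_{d,g}(\nu_\cD(p))$, as required. The hard part will be the Jacobian step: to pass to the limit in the anisotropic surface integral one needs the convergence of $\delta_{1/r_k}(\widetilde\Sigma)$ to $\Pi_0$ to be strong enough (a local $C^1$ statement rather than mere Hausdorff convergence) so that both the Euclidean area and the pointwise Euclidean normals converge correctly, and to verify the delicate cancellation of $|\nu_\cD(p)|_g$ between the surface Jacobian and the definition of $\sigma_\Sigma^{SR}$.
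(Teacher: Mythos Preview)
Your plan matches the paper's proof in all essential respects: exponential coordinates at $p$ with a privileged orthonormal frame, Theorem~\ref{teo:diameter} to replace $\diam\B_k$ by $2r_k$, the pointed blow-up of Theorem~\ref{th:localgroup} to make $\delta_{1/r_k}\widetilde\B_p(\tilde q_k,r_k)$ converge to $\widehat\B_p(z_0,1)$, and the Jacobian cancellation of $|\nu_\cD|_g$ against the anisotropic rescaling. Two execution differences are worth noting. First, the paper makes your ``hard part'' explicit by aligning $X_1(p)=\nu_\cD(p)/|\nu_\cD(p)|_g$ so that $\widetilde\Sigma$ is a graph $\{(\varphi(y),y):y\in I\}$ over $\{0\}\times\R^{\n-1}$; then your tangential Jacobian of $\delta_{1/r}$ becomes the concrete identity $\sqrt{\det\bigl[g_p(\partial_i(F\circ\Phi),\partial_j(F\circ\Phi))\bigr](0)}=|\nabla_g f(p)|_g/|\nabla_\cD f(p)|_g=1/|\nu_\cD(p)|_g$, and the limit analysis reduces to pointwise convergence of characteristic functions on $\R^{\n-1}$ (dominated convergence for the upper bound, Fatou for the lower), avoiding any appeal to $C^1$ or Hausdorff convergence of surfaces. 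Second, for the lower bound the maximizing center $z_0$ may lie on $\partial\widehat B_p(0,1)$, where your choice $\tilde q_k=\delta_{r_k}z_0$ need not satisfy $p\in\B(q_k,r_k)$; the paper handles this by testing with the slightly larger balls $\B(F(\delta_r z_0),\lambda r)$ for $\lambda>1$, showing $F(\delta_r z_0)\in\B(p,(1+\varepsilon)r)$, and sending $\lambda\to1^+$ at the end.
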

\begin{proof}
	Since $\Sigma$ is a hypersurface of class $C^1$, we may find a suitably small open neighborhood $ W$ of $p$ and $f\in C^1(W)$ such that 
	\[
	\Sigma\cap W=\{x\in  W: f(x)=0\},
	\]
	with $\dd f(q)\neq0$ and $q$ is a regular point of $M$ for every $q\in W$.
	Since $p$ is noncharacteristic, we have $\nabla_{\cD}f(p)\in\cD_p\sm\set{0}$, then we set
	\beq\label{eq:nu_dv_1}
	v_1= \frac{\nabla_\mathcal D f(p)}{|\nabla_\mathcal D f(p)|_g}=\theta_p\,\nu_\cD(p) \in \mathcal D_p
	\eeq
	for some $\theta_p\neq0$.
	Then we choose $v_2,\ldots,v_\m\in\cD_p$ such that $(v_1,\ldots,v_\m)$ is an orthonormal basis of $\cD_p$. Due to Lemma~\ref{lemma:estensioneadattata}, we can build a privileged orthonormal frame
	$(X_1,\dots, X_\n)$ such that 
	\[
	X_i(p)=v_i\quad \text{for all} \quad i=1,\ldots,\m.
	\]
	Up to changing the sign of one vector field $X_j$, with $1\le j\le \n$, we can also assume that the previous frame is positively oriented with respect to $\omega$, namely $\omega(X_1\wedge\cdots\wedge X_\n)>0$ on $W$. 
	By definition of $X_1(p)=v_1$, it follows that and
	\begin{equation}\label{eq:derivatenulle}
		X_1f(p)=|\nabla_\mathcal D f(p)|_g\quad\text{and}\quad X_2f(p)=\ldots=X_\m f(p)=0.
	\end{equation}
	We define $\bX=(X_1,\ldots,X_\n)$ and fix $F_{p,\bX}\colon V_p\to F_{p,\bX}(V_p)\subset W$, that introduces exponential coordinates of the first kind centered at $p$, where $F_{p,\bX}$ is a diffeomorphism and $V_p$ is an open neighborhood of $0\in\R^\n$, according Definition~\ref{def:unifexpcoord}. 
	
	To simplify notation we set $F=F_{p,\bX}$, since $p$ and $\bX$ are fixed throughout the proof. By the implicit function theorem applied to the composition $f\circ F$, 
	we find an open neighborhood $I$ of $0$ in $\R^{\n-1}$, an open interval $J\subset\R$ containing $0$ and a $C^1$ smooth function $\varphi\colon I\rightarrow J$ with $\varphi(0)=0$, such that 
	\begin{equation}\label{eq:implicit2}
		\Sigma\cap F(J\times I)=F\left(\{(\varphi(y),y): y\in I\}\right).
	\end{equation}
	We define $A=J\times I\subset V_p$ and consider the restriction $F\colon A \rightarrow U$, where $U=F(A)$. By our definitions 
	\begin{equation}\label{eq:G(t,X)}
		F(t,x)=\exp(t X_1+x_2X_2+\cdots+x_\n X_\n)(p)
	\end{equation} 
	for all $(t,x)\in A$, where $x=(x_2,\dots,x_\n)\in \R^{\n-1}$. 
	We also introduce the graph mapping $\Phi\colon I\to J\times I$ as $\Phi(x)=(\varphi(x), x)$ for every $x\in I$, so that
	\[
	\Sigma\cap F(A)=F\circ\Phi(I).
	\]
Combining the definition of $\sigma_\Sigma^{SR}$ from which
\beq
	\sigma_\Sigma^{SR}(\B(q,r))=\int_{\Sigma\cap \B(q,r)} \|\omega\|_g\,|\nu_\mathcal D|_g \, \dd\sigma_g,
\eeq
the change of variables \cite[Section 13.4.3, formula (12)]{BuragoZalgaller1988}, the Definition~\ref{def:balls} and the Proposition~\ref{prop:localdist}, we get
that $\sigma_\Sigma^{SR}(\B(q,r))$ is equal to 
\beq
\int_{\Phi^{-1}(\widetilde \B_p(F^{-1}(q),r))}\!\|\omega\|_g\!\circ\! F\!\circ\! \Phi\;\;|\nu_\mathcal D|_g\!\circ\! F\!\circ\! \Phi\; \sqrt{\det(g(\partial_i(F\circ\Phi),\partial_j(F\circ\Phi)))} \der \mathscr L^{\n-1},
\eeq
for every $q\in\B(p,r)\subset U$ and $r>0$ sufficiently small. 
	For $i,j=2,\dots, \n$, the notation $[g(\partial_i(F\circ\Phi),\partial_j(F\circ\Phi))]_{ij}$ indicates the corresponding $(\n-1)\times (\n-1)$ matrix. For $\tau>0$, define the dilation $\Lambda_\tau\colon \R^{\n-1}\rightarrow \R^{\n-1}$ by
	\[
	\Lambda_\tau(\xi_2,\dots,\xi_\n)=(\tau\xi_2,\dots, \tau\xi_\m,\tau^2\xi_{\m+1},\dots,\tau^2\xi_{\n_2},\dots, \tau^s\xi_\n).
	\]
	We also introduce the homogeneous norm 	$\|\xi\|=\sum_{i=1}^\n |\xi_i|^{1/w_i}$ for all $\xi\in \R^\n$. To simplify notation, we define 
	$\alpha(\tau,\eta)$ as the number given by the formula
	\[
	\|\omega\|_g(F(\Phi(\Lambda_\tau(\eta))))\,|\nu_\mathcal D|_g(F(\Phi(\Lambda_\tau(\eta)))) \sqrt{\det([g(\partial_i(F\circ\Phi),\partial_j(F\circ\Phi))]_{ij})(\Lambda_\tau(\eta))}
	\]
	for all $\tau>0$ and $\eta \in I$ such that $\Lambda_\tau(\eta)\in I$.
	For $r>0$ sufficiently small as above, we perform the change of variables $\eta\to\Lambda_r\eta$, obtaining
	\begin{equation}\label{eq:sigmadilatato2}
		\sigma_\Sigma^{SR}(\B(q,r))=r^{Q-1}\int_{\Lambda_{1/r}\Phi^{-1}(\widetilde \B_p(F^{-1}(q),r)))} \alpha(r,\eta) \,\der \mathscr L^{\n-1}(\eta).
	\end{equation}
	We first aim to prove that there exists $r_0>0$ and a compact set $K\subset\R^{\n-1}$ such that
	\begin{equation}\label{eq:equiboundedness2}
		\bigcup_{r\in (0,r_0]}\bigcup_{q\in \B(p,r)}\Lambda_{1/r}(\Phi^{-1}(\widetilde \B_p(F^{-1}(q),r)))\subset K.
	\end{equation}
	From Proposition~\ref{prop:localdist}, up to further shrinking
	both $A=J\times I$ and $U$, we get
	\begin{equation}\label{eq:cambiocoord2}
		\widetilde{d}_p(v_1,v_2)=d(F(v_1),F(v_2))
	\end{equation}
	for all	$v_1,v_2\in J\times I$. We observe that 
	\[
	w\in \Lambda_{1/r}(\Phi^{-1}(\widetilde \B_p(F^{-1}(q),r)))
	\]
	if and only if $\widetilde d_p(\Phi(\Lambda_r w),F^{-1}(q))\leq r$ with $0<r\le r_0$ and that $r_0>0$ can be chosen sufficiently small and independent of $q$. 
	Indeed, taking $r>0$ small enough, the sets 
	$\widetilde \B_p(F^{-1}(q),r)$ are all contained in a compact
	subset of $J\times I$, as $q$ varies in $\B(p,r)$. By \eqref{eq:cambiocoord2}, taking $0<r\le r_0$, 
	the previous inequality can be written as follows 
	\[ 
	d( F(\varphi(\Lambda_r w), \Lambda_r w), q)\leq r.
	\]
	This in turn implies that $d(F(\varphi(\Lambda_r w), \Lambda_r w), p)\leq 2r$, which equivalently writes 
	\begin{equation}\label{eq:boundedness2}
		\widetilde d_p((\varphi(\Lambda_r w), \Lambda_r w), 0)\leq 2r.
	\end{equation}
	By Theorem~\ref{th:localgroup}, observing that both $\widehat d_p$ and $\|\!\cdot\!\|$ are continuous and homogeneous with respect to dilations
		\[
		\delta_r(\xi_1,\dots,\xi_\n)=(r\xi_1,\dots,r\xi_\m,r^2\xi_{\m+1},\dots,r^2\xi_{\n_2},\dots,r^s\xi_\n),
		\]    
		up to shrinking $U, J$ and $I$, then there exists $C>1$ such that 
		\[
		\frac 1C \|\xi\|\leq \widetilde d_p(0,\xi)\leq C\|\xi\|
		\]
		for every $\xi \in J\times I$. Then inequality
	\eqref{eq:boundedness2} yields 
	\[
	\|(\varphi(\Lambda_r w), \Lambda_r w)\|\leq 2 Cr
	\] 
	for $0<r\le r_0$. This in turn implies that 
	\[
	\|w\| \leq \left|\frac{\varphi(\Lambda_r w)}{r}\right|+\|w\|\leq 2C
	\]
	and concludes the proof of \eqref{eq:equiboundedness2}. 	
	As a first consequence, we notice that 
\[
	\cs^{Q-1}(\sigma_\Sigma,p) <+\infty
\]
	and hence there exist a sequence $r_k>0$ converging to $0$ and a sequence $y_k\in \B(p,r_k)$ such that
	\[
	\cs^{Q-1}(\sigma^{SR}_\Sigma,p)=\lim_{k\to\infty} \frac{r_k^{Q-1}}{c_{Q-1}(\diam\,\B(y_k,r_k))^{Q-1}}\int_{A_k}\alpha(r_k,\eta)\;\der\mathscr L^{n-1}(\eta),
	\]
	where $A_k=\Lambda_{1/{r_k}}(\Phi^{-1}(F^{-1}(\B(y_k,r_k))))$. By Theorem~\ref{teo:diameter} and the choice $c_{Q-1}=2^{1-Q}$ we obtain 
	\begin{equation}\label{eq:densThetaFederer}
		\cs^{Q-1}(\sigma^{SR}_\Sigma,p)=\lim_{k\to \infty} \int_{A_k}\alpha(r_k,\eta)\;\der\mathscr L^{\n-1}(\eta).
	\end{equation}
	The dilations $\delta_r\colon\R\times \R^{\n-1}\to \R^\n$ can be
	written as $\delta_r(t,x)=(r t, \Lambda_r x)$. We consider
	an element $\delta_{1/{r_k}}F^{-1}(y_k)$, that is contained
	in $\delta_{1/{r_k}}\widetilde\B_p(0,r_k)$.
	
	Due to \eqref{incl:Btildeqreps}, for $\ep>0$ fixed and $k$ sufficiently large, we have
	\begin{equation}
		\widehat{d}_p(\delta_{1/{r_k}}F^{-1}(y_k),0)\le 1+\ep.
	\end{equation}
	Since $\ep>0$ is arbitrary, up to extracting a subsequence, we may assume that
	\begin{equation}
		\lim_{k\to\infty}\delta_{1/{r_k}}F^{-1}(y_k)=z\in \widehat \B_p(0,1).
	\end{equation}
	Next, we prove that for every $(0,w)\in (\{0\}\times \R^{\n-1}) \setminus \widehat\B_p(z,1)$ one has 
	\begin{equation}\label{eq:limitecaratteristiche1dinuovo}
		\lim_{k\to\infty}\chi_{A_k}(w)=0.
	\end{equation}
	Assume by contradiction that the previous limit does not hold.
	Up to selecting a subsequence, we can assume that $w\in A_k$ for every $k\in \mathbb N$. By definition of $A_k$, we have that 
	\[
	\Phi(\Lambda_{r_k}w)\in \widetilde\B_p(F^{-1}(y_k),r_k).
	\]
	In terms of distances, the previous condition becomes
	\begin{equation}\label{eq:dtildaerrecappa2}
		\widetilde d_p((\varphi(\Lambda_{r_k}w),\Lambda_{r_k}w),F^{-1}(y_k))\leq r_k
	\end{equation}
	for every $k\in \mathbb N$. Due to \eqref{eq:dtildaerrecappa2}, the triangle inequality allows us to apply Theorem~\ref{th:localgroup} with $R=2$, therefore 
	\begin{equation}
		\left|\frac{\widetilde d_p((\varphi(\Lambda_{r_k}w),\Lambda_{r_k}w),F^{-1}(y_k))}{r_k}-\widehat d_p\left(\left(\frac{\varphi(\Lambda_{r_k}w)}{r_k},w\right),\delta_{1/{r_k}}F^{-1}(y_k)\right)\right|<\varepsilon
	\end{equation}
	for every $k$ sufficiently large. Thus, taking into account \eqref{eq:dtildaerrecappa2}, we get
	\begin{equation}\label{eq:dcappuccio2}
		\widehat d_p\left(\left(\frac{\varphi(\Lambda_{r_k}w)}{r_k},w\right),\delta_{1/{r_k}}F^{-1}(y_k)\right)< 1+\varepsilon
	\end{equation}
	for $k$ sufficiently large.
	As a direct consequence of \eqref{eq:derivatenulle}, we have that
	\begin{equation}\label{eq:citti2}
		\lim_{r\to 0}\frac {\varphi{(\Lambda_r w)}}{r}=-\frac{\sum_{i=2}^\m w_i\, X_i f (p)}{X_1 f(p)}=0
	\end{equation}
	for any $w\in \R^{\n-1}$. Passing to
	the limit in $\eqref{eq:dcappuccio2}$, we get 
	\[
	\widehat d_p((0,w),z)\leq 1+\varepsilon.
	\]
	The arbitrary choice of $\ep>0$ gives $(0,w)\in \widehat\B_p(z,1)$, 
	that is a contradiction. This proves \eqref{eq:limitecaratteristiche1dinuovo}.
	We consider the set 
	\begin{equation}\label{eq:S_z}
		S_z=(\{0\}\times \R^{\n-1})\cap \widehat \B_p(z,1).
	\end{equation}
	Due to \eqref{eq:nu_dv_1}, we observe that the subspace 
	\beq\label{eq:Pinu}
	\Pi(\nu_\cD(p))=\dd F(0)(\{0\}\times \R^{\n-1})\subset T_pM
	\eeq
	is orthogonal to $\nu_\cD(p)$. Furthermore, $S_z$ can be seen as a subset of $\R^{\n-1}$, by means of the projection $\pi\colon\R\times\R^{\n-1}\to \R^{\n-1}$, hence it is
	easy to realize that 
	\begin{equation}\label{eq:S_zH^n-1}
		\mathscr L^{\n-1}(\pi(S_z))=\cH_E^{\n-1}(S_z),
	\end{equation}
	where $\cH^{\n-1}_E$ is the standard Hausdorff measure with respect to the Euclidean norm. We split the integral
	\[
	\int_{A_k}\alpha(r_k,\eta)\;\der\mathscr L^{\n-1}(\eta)=I_k+J_k,
	\]
	as follows  
	\[
	I_k=\int_{A_k\cap \pi(S_z)}\alpha(r_k,\eta)\;\der\mathscr L^{\n-1}(\eta)\quad\text{and}\quad J_k=\int_{A_k\setminus \pi(S_z)}\alpha(r_k,\eta)\;\der\mathscr L^{\n-1}(\eta).
	\]
	By \eqref{eq:equiboundedness2}, the sets $A_k$ are uniformly bounded in $K$, then the integrand of
	\[
	J_k= \int_{K\setminus \pi(S_z)}\chi_{A_k}(\eta)\alpha(r_k,\eta)\;\der\mathscr L^{\n-1}(\eta)
	\]
	is uniformly bounded by a summable function.
	By the dominated convergence theorem combined with \eqref{eq:limitecaratteristiche1dinuovo}, we obtain that
	$J_k\to0$ as $k\to \infty$, therefore \eqref{eq:densThetaFederer} gives
	\begin{equation}\label{eq:limitI_k}
		\lim_{k\to\infty}I_k=\cs^{Q-1}(\sigma_\Sigma^{SR},p).
	\end{equation}
	We notice that 
	\begin{equation}
		\alpha(r,\eta)\to \|\omega\|_g(p) |\nu_\mathcal D|_g (p) \sqrt{\det(g_p(\partial_i(F\circ\Phi),\partial_j(F\circ\Phi)))(0)}
	\end{equation}
	as $r\to 0$ uniformly on compact sets of $\R^{\n-1}$.
	We wish to simplify the expression of the previous limit.
	The vectors $e_2,\dots,e_\n$ denote the canonical basis of $\R^{\n-1}$. Then, for every $\eta\in I$ and every $i=2,\dots, \n$, one has 
	\begin{equation}
		\der(F\circ \Phi)(\eta)(e_i)= \der F(\Phi(\eta))(\partial_i\Phi(\eta))= \der F(\Phi(\eta))(\partial_i\varphi(\eta),e_i).
	\end{equation}
	We are interested in computing
	\[
	\det g_p\Big(\der F(0)\big(\partial_i\varphi(0)e_1+e_i\big), \der F(0)\big(\partial_j\varphi(0)e_1+e_j\big)\Big).
	\]
	Notice that, by definition of $F$ in \eqref{eq:G(t,X)}, $\der F(0)(e_i)=X_i(p)$, which have been chosen orthonormal with respect to $g_p$. As a consequence, 
	we get 
	\[
	g_p(\partial_i(F\circ\Phi)(0),\partial_j(F\circ\Phi)(0))= \partial_i\varphi(0)\partial_j\varphi(0)+\delta_{ij}
	\]
	for every $i,j=2,\dots, \n$. Notice that the entry $(i,j)$ of the matrix 
	$\der \Phi(0)^*\der \Phi(0)$ coincides with $\partial_i\varphi(0)\partial_j\varphi(0)+\delta_{ij}$, therefore 
	\[
	\det(\der \Phi(0)^*\der \Phi(0))= 1+|\nabla\varphi(0)|^2.
	\]
	By the implicit function theorem, one also gets
	\[
	\partial_j \varphi (0)=-\frac {\partial_j (f\circ F)(0)}{\partial_1 (f\circ F)(0)}
	\]
	for every $j=2,\dots,\n$. As consequence, considering also \eqref{eq:derivatenulle}, we can write 
	\[
	\begin{aligned}
		\sqrt{\det g_p(\partial_i(F\circ\Phi),\partial_j(F\circ\Phi))(0)}&= \sqrt{\det (\partial_i\varphi(0)\partial_j\varphi(0)+\delta_{ij})}\\&= \sqrt{1+|\nabla \varphi(0)|^2}\\&=\frac{|\nabla_g f|_g(p)}{|\nabla_\mathcal D f|_g(p)}=\frac{1}{|\nu_\mathcal D(p)|_g}.
	\end{aligned}
	\]
	We have proved that
	\begin{equation}\label{eq:omegap}
		\lim_{r\to0}\alpha(r,\eta)=\|\omega(p)\|_g
	\end{equation}
	uniformly on compact sets of $\R^{\n-1}$.
	Thus, combining \eqref{eq:limitI_k} and the inequality
	\[
	I_k\leq \int_{\pi(S_z)}\alpha(r_k,\eta)\;\der\mathscr L^{\n-1}(\eta),
	\]
	we obtain the upper estimate 
	\begin{equation}\label{ineq:thetaQ-1Ln-1}
		\cs^{Q-1}(\sigma_\Sigma^{SR},p)\leq \|\omega(p)\|_g\,\mathscr L^{\n-1}(\pi(S_{z_0})),
	\end{equation}
	where $z_0\in \widehat \B_p(0,1)$ is such that 
	$$
	\mathscr L^{\n-1}(\pi(S_{z_0}))=\max_{y\in \widehat \B_p(0,1)}\mathscr L^{\n-1}(\pi(S_y)).
	$$
	As a result, combining \eqref{eq:S_z}, \eqref{eq:Pinu}, \eqref{eq:S_zH^n-1} and Definition~\ref{def:beta}, we get
	\begin{equation}\label{eq:betaQ-1}
		\mathscr{L}^{\n-1}	(\pi(S_{z_0})) = \cH_E^{\n-1} (\dd F(0)^{-1}(\Pi(\nu))\cap\widehat \B_p(z_0,1))  =\beta_{d,g}(\nu_\cD(p)).
	\end{equation}
	It follows that
	\begin{equation}\label{ineq:thetaQ-1beta}
		\cs^{Q-1}(\sigma_\Sigma^{SR},p)\leq\|\omega(p)\|_g\beta_{d,g}(\nu_\cD(p)).
	\end{equation}
	To prove the opposite inequality of \eqref{ineq:thetaQ-1beta},  we consider the curve $y_0^r= F(\delta_r z_0)$ of parameter $r>0$
	and fix $\lambda >1$. For $\varepsilon_0>0$ arbitrarily fixed, by Theorem~\ref{thm:BallEquiv},
	there exists $r_0>0$ such that
	\begin{equation}
		z_0\in\delta_{1/r}(\widetilde{\B}_p(0,r(1+\ep_0)))
	\end{equation}
	whenever $0<r<r_0$. By definition of $y_0^r$, we have proved that
	$y_0^r\in \B(p,r(1+\varepsilon_0))$ for $0<r<r_0$.
	We arbitrarily choose $0<r'<r_0$.
	This easily implies that
	\begin{align*}
		\sup_{0<r<r'}\frac{\sigma_\Sigma^{SR}(\B(y_0^r,\lambda r))}{(\lambda r)^{Q-1}}&\leq (1+\varepsilon_0)^{Q-1} \sup_{0<r<\lambda r',\;y\in \B(p,r(1+\varepsilon_0))}\frac{\sigma_\Sigma^{SR}(\B(y, r(1+\varepsilon_0)))}{(1+\varepsilon_0)^{Q-1}r^{Q-1}}, \\
		&=(1+\varepsilon_0)^{Q-1} \sup_{0<r<\lambda r'(1+\ep_0),\;y\in \B(p,r)}\frac{\sigma_\Sigma^{SR}(\B(y, r))}{r^{Q-1}}.
	\end{align*}
	For arbitrary $\ep_1>0$, by Theorem~\ref{teo:diameter}, we can choose
	$r'>0$ sufficiently small such that 
	\begin{equation*}
		\sup_{0<r<r'}\frac{\sigma_\Sigma^{SR}(\B(y_0^r,\lambda r))}{(\lambda r)^{Q-1}}\leq(1+\varepsilon_0)^{Q-1}(1+\varepsilon_1)^{Q-1} 
		\sup_{0<r<\lambda r'(1+\ep_0),\;y\in \B(p,r)}\frac{2^{Q-1}\sigma_\Sigma^{SR}(\B(y, r))}{(\diam\B(y,r))^{Q-1}} 
	\end{equation*}
	The right-hand side of the previous inequality is less than or equal to 
	\begin{equation*}
		(1+\varepsilon_0)^{Q-1}(1+\varepsilon_1)^{Q-1}
		\sup_{p\in \B(y,r),\;\text{\scriptsize diam}\B(y,r)\le2\lambda r'(1+\ep_0)}\frac{2^{Q-1}\sigma^{SR}_\Sigma(\B(y, r))}{(\diam\B(y,r))^{Q-1}}.
	\end{equation*}
	Thus, from the definition of Federer density, taking the limit as $r'\to0$, it follows that	
	\begin{equation}
		\limsup_{r\to0}\frac{\sigma^{SR}_\Sigma(\B(y_0^r,\lambda r))}{(\lambda r)^{Q-1}}\leq (1+\varepsilon_0)^{Q-1}(1+\varepsilon_1)^{Q-1} \cs^{Q-1}(\sigma^{SR}_\Sigma,p).
	\end{equation}
	The arbitrary choice of $\varepsilon_0,\ep_1>0$ gives
	\begin{equation}\label{eq:limsup2}
		\limsup_{r\to0}\frac{\sigma^{SR}_\Sigma(\B(y_0^r,\lambda r))}{(\lambda r)^{Q-1}}\leq  \cs^{Q-1}(\sigma^{SR}_\Sigma,p).
	\end{equation}
	For $r>0$ sufficiently small, we set 
	\[
	A_0^r= \Lambda_{1/{(\lambda r)}}\Phi^{-1}(\widetilde \B_p(\delta_r z_0,\lambda r)),
	\]
	hence taking into account \eqref{eq:sigmadilatato2}, we get
	\beq\label{eq:densityratio}
	\frac{\sigma^{SR}_\Sigma(\B(y_0^r,\lambda r))}{(\lambda r)^{Q-1}}=\int_{A_0^r}\alpha(\lambda r, \eta)\;\der\mathscr L^{\n-1}(\eta)=\frac 1{\lambda^{Q-1}}\int_{\Lambda_\lambda A_0^r}\alpha(\lambda r,\Lambda_{1/\lambda}\eta)\;\der\mathscr L^{\n-1}(\eta).
	\eeq
	We wish to prove that
	\begin{equation}\label{eq:caratteristiche1dinuovo}
		\lim_{r\to0}\chi_{\Lambda_{\lambda}A_0^r}(w)=1,
	\end{equation}
	whenever $(0,w)\in (\{0\}\times\R^{\n-1})\cap \widehat \B(z_0,\lambda_1)$, with $1<\lambda_1<\lambda$. The extra parameter $\lambda_1$ allows us to use 
	the closed ball $\widehat \B_p(z_0,\lambda_1)$, since the definition \eqref{eq:S_z} of $S_z$ involves the closed ball. By contradiction, we assume that there exists an infinitesimal sequence $r_k>0$ such that $w\notin \Lambda_\lambda A_0^{r_k}$ for all $k\in \mathbb N$. By definition of $A_0^{r_k}$, we get
	\[
	\widetilde d_p(\Phi(\Lambda_{r_k}w),\delta_{r_k}z_0)>\lambda r_k.
	\]
	We know that $\delta_{r_k}z_0\in\widehat \B_p(0,r_k)$ and that the limit \eqref{eq:citti2} gives
	\beq\label{eq:lim0,w}
	\lim_{k\to\infty}\Lambda_{1/{r_k}}(\Phi(\Lambda_{r_k}w))=(0,w),
	\eeq
	therefore $\Phi(\Lambda_{r_k}w)\in\widehat \B_p(0,\lambda r_k)$ for any $k$ large enough. Then we may apply Theorem~\ref{th:localgroup} with $R=\lambda$ and for any fixed $1<\lambda'<\lambda$, we get 
	\[
	\widehat d_p\left(\left(\frac{\varphi(\Lambda_{r_k}w)}{r_k},w\right),z_0\right)>\lambda'
	\]
	for $k$ sufficiently large. By \eqref{eq:lim0,w} and the arbitrary choice of $\lambda'<\lambda$, we conclude that
	\[
	\widehat d_p((0,w),z_0)\geq \lambda>\lambda_1,
	\]
	that is a contradiction with the initial assumption $(0,w)\in (\{0\}\times \R^{\n-1})\cap \widehat \B_p(z_0,\lambda_1)$.
	We set 
		\[
		S_{z_0,\lambda}= (\{0\}\times \R^{\n-1})\cap \widehat\B_p(z_0,\lambda_1)
		\]
		and apply Fatou's lemma to \eqref{eq:densityratio}, so that
		combining \eqref{eq:limsup2}, \eqref{eq:caratteristiche1dinuovo} and \eqref{eq:omegap}, we get
		\[
		\lambda^{1-Q} \|\omega(p)\|_g\,\mathscr L^{\n-1}(\pi(S_{z_0,\lambda_1}))\leq \cs^{Q-1}(\sigma^{SR}_\Sigma,p).
		\]
	Letting first $\lambda_1\to \lambda^-$ and then $\lambda \to 1^+$, it follows that
	\[
	\|\omega(p)\|_g\mathscr L^{\n-1}(\pi(S_{z_0}))\leq \cs^{Q-1}(\sigma^{SR}_\Sigma,p).
	\]
	Due to \eqref{eq:betaQ-1}, the opposite inequality is also established, concluding the proof.
\end{proof}

\noindent 
{\bf Acknowledgements.} It is a pleasure to thank Davide Barilari, Antonio Lerario and Luca Rizzi for helpful comments and interesting discussions. We also thank Gioacchino Antonelli for useful comments.

\bibliographystyle{acm}
\bibliography{references}	
\end{document}